\tikzset{
    >=stealth,
    every picture/.style={thick},
    graphs/every graph/.style={empty nodes},
}
\tikzstyle{vertex}=[
\tikzstyle{printersafe}=[decoration={snake,amplitude=0pt}]
\newcommand{\supp}{\operatorname{supp}}
\newcommand{\pp}{\mathbb{P}}
\newcommand{\qq}{\mathbb{Q}}
\newcommand{\zz}{\mathbb{Z}}
\newcommand{\rr}{\mathbb{R}}
\newcommand{\kk}{\mathbb{K}}
\def\O#1.{\mathcal {O}_{#1}}			
\def\pr #1.{\mathbb P^{#1}}				
\def\af #1.{\mathbb A^{#1}}			
\def\ses#1.#2.#3.{0\to #1\to #2\to #3 \to 0}	
\def\xrar#1.{\xrightarrow{#1}}			
\def\K#1.{K_{#1}}						
\def\bA#1.{\mathbf{A}_{#1}}			
\def\bM#1.{\mathbf{M}_{#1}}				
\def\bL#1.{\mathbf{L}_{#1}}				
\def\bB#1.{\mathbf{B}_{#1}}				
\def\bK#1.{\mathbf{K}_{#1}}			
\def\subs#1.{_{#1}}					
\def\sups#1.{^{#1}}
  \newtheorem{introthm}{Theorem}
  \newtheorem{introcor}{Corollary}
  \newtheorem{theorem}{Theorem}[section]
  \newtheorem{lemma}[theorem]{Lemma}
  \newtheorem{corollary}[theorem]{Corollary}
  \newtheorem{definition}[theorem]{Definition}
\theoremstyle{remark}
\numberwithin{equation}{section}
\begin{document}

\title[Minimal log discrepancies of regularity one]
{Minimal log discrepancies of regularity one}

\author[J.~Moraga]{Joaqu\'in Moraga}
\address{Department of Mathematics, Princeton University, Fine Hall, Washington Road, Princeton, NJ 08544-1000, USA
}
\email{jmoraga@princeton.edu}

\subjclass[2010]{Primary 14E30, 
Secondary 14M25.}
\maketitle

\begin{abstract}
In this article, we use the cone of nef curves to study minimal log discrepancies.
The first result is an improvement of the nef cone theorem 
in the case of log Calabi-Yau dlt pairs.
Then, we prove that the ascending chain condition for
$n$-dimensional minimal log discrepancies of regularity one holds around zero.
Furthermore, we show that there exists an upper bound for the minimal log discrepancy of any $n$-dimensional klt singularity of regularity one.
\end{abstract}

\setcounter{tocdepth}{1} 
\tableofcontents

\setcounter{tocdepth}{1}
\tableofcontents

\section{Introduction}

The minimal log discrepancy is an invariant of klt singularities.
Regardless of the simplicity of its definition, 
proving statements about it is often challenging. 
Shokurov proposed a conjecture regarding the ascending chain condition of minimal log discrepancies~\cite{Sho04}.
On the other hand, 
Ambro conjectured that the minimal log discrepancy is lower semicontinuous~\cite{Amb99}
These conjectures imply the termination of flips~\cite{Sho04}.
Many results about termination of flips rely on 
theorems which are special cases of these two conjectures (see, e.g.,~\cite{Bir07,Mor18a,HM20}).
It is expected that the boundedness of Fano varieties in dimension $n$ implies the ascending condition for minimal log discrepancies in dimension $n+1$ (see, e.g.,~\cite{BS10}). The underlying principle is that $n$-dimensional Fano type varieties are the building blocks of partial resolutions of $(n+1)$-dimensional klt singularities.
More precisely, if the minimal log discrepancy of the klt singularity is bounded away from zero,
we expect a similar behavior on the Fano type varieties extracted on a partial resolution.
Thus, it is natural to apply Birkar's boundedness of Fano varieties in this context.
This approach works especially well in the case of singularities of regularity zero, the so-called exceptional singularities~\cite{Mor18b,HLS19}.
These singularities are deformations of cones over exceptional Fano type varieties~\cite{Mor18c,HLM20}.

Given a $n$-dimensional exceptional klt singularity $(X;x)$, we can perform a blow-up $\pi\colon Y\rightarrow X$ that extracts a unique prime exceptional divisor $E$. The variety $E$ is an exceptional $(n-1)$-dimensional Fano type variety.
These varieties belong to a bounded family~\cite{Bir19}.
In~\cite{Mor18b}, we use this boundedness result to prove the existence of a curve in the smooth locus of $E$
with bounded degree with respect to $-K_E$.
A straightforward computation implies that the log discrepancy of $(X;x)$ at $E$ is bounded above by a constant only depending on the dimension. 
In the general setting of $n$-dimensional klt singularities,
it is not possible to extract a bounded $(n-1)$-dimensional Fano type variety over the singularity.
In order to generalize the approach of~\cite{Mor18b},
we need to consider partial resolutions of the singularity
extracting several exceptional prime divisors. 
Furthermore, we need to show that each such exceptional divisor
contains a special kind of curve on its smooth locus.
More generally, we need to understand whether we can generate the cone of curves of a Fano type variety with \textit{nice} curves.
For instance, curves which are contained in the smooth locus and have bounded degree with respect to the anti-canonical divisor.
In this direction, it is natural to look at those curves which come from fibers of Mori fiber space structures of the Fano type varieties.
Hence, we expect to find many of these curves in the $K_X$-negative domain of the cone of movable curves.
The classic cone theorem for algebraic varieties
has an analog in the case of nef curves (see, e.g.,~\cite{Bat89,Ara10,Leh12}).
Our first theorem is an enhancement of these theorems
in the case of log Calabi-Yau pairs.
The main new outcome is that we can control certain numerical invariants
of the $K_X$-extremal negative curves.

\begin{introthm}\label{introthm:nef-cone}
Let $n$ and $N$ be positive integers.
There exists $k:=k(n,N)$, only depending on $n$ and $N$,
satisfying the following.
Let $(X,B)$ be a $n$-dimensional dlt log Calabi-Yau pair
such that $N(K_X+B)\sim 0$.
Let $E_1,\dots,E_r$ be the prime components of $\lfloor B\rfloor$. 
Let $B'=B-\lfloor B\rfloor$.
There are countable many $(K_X+B')$-negative curves $C_i$, with $i\in \zz$, satisfying the following conditions: 
\begin{enumerate}
    \item the curve $C_i$ is movable and lies in the smooth locus of $X$,
    \item  the curve $C_i$ is either disjoint from $B'$ or intersect it transversally with $B'\cdot C_i \leq k$, 
    \item for every $j\in \{1,\dots,r\}$ the curve $C_i$ is either disjoint from $E_j$ or intersect it transversally with $E_j \cdot C_i \leq k$,
    \item for each subset $Z\subset X$ of codimension at least two
    and $x\in X$ general, we can find $C_i'\equiv C_i$ with $C_i'\cap Z=\emptyset$ and $x\in C_i'$, and 
    \item $C_i$ intersects at least one and at most $n+1$ of the $E_j$'s.
\end{enumerate}
Moreover, the following equality holds: 
\begin{equation}\label{eq:nef-cone} 
\overline{NE}_1(X)_{K_X+B' \geq 0} +
\overline{NM}_1(X) = 
\overline{NE}_1(X)_{K_X+B' \geq 0} +
\overline{\sum_{i\in \zz}\rr_{\geq 0}[C_i]}.
\end{equation} 
Furthermore, the rays $\rr_{\geq 0}[C_i]$ only accumulates to hyperplanes that support both
$\overline{NM}_1(X)$ and the cone
$\overline{NE}_1(X)_{K_X+B' \geq 0}$.
\end{introthm}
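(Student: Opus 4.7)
The plan is to upgrade the movable cone theorem of Batyrev--Araujo--Lehmann by using the log Calabi--Yau structure to attach, to each $(K_X+B')$-negative extremal ray, a Mori fiber space whose general fiber has bounded complements, and then to harvest the desired curves from such fibers.

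\textbf{Step 1: apply the cone theorem for movable curves.}
First I would invoke the nef/movable cone theorem (Batyrev, Araujo, Lehmann) for the pair $(X,B')$: it already produces countably many $(K_X+B')$-negative extremal classes $\gamma_i\in \overline{NM}_1(X)$ such that
\begin{equation*}
\overline{NM}_1(X) = \overline{NE}_1(X)_{K_X+B'\geq 0} + \overline{\sum_i \rr_{\geq 0}[\gamma_i]},
\end{equation*}
with the accumulation only toward hyperplanes supporting both cones. Note also that, since $N(K_X+B)\sim 0$, we have $K_X+B'\sim_{\qq} -\lfloor B\rfloor$, so being $(K_X+B')$-negative is the same as intersecting $\lfloor B\rfloor=\sum E_j$ positively; this will give the lower bound in~(5) essentially for free.

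\textbf{Step 2: attach a Mori fiber space to each extremal ray.}
By the Araujo--Lehmann analysis of movable extremal rays, each $\gamma_i$ is represented by the fiber class of a rational fibration obtained from a $(K_X+B')$-MMP with scaling. Concretely, I would run such an MMP; since $K_X+B'$ is anti-effective, the MMP ends with a Mori fiber space $\psi_i\colon X_i\to Z_i$ and the birational map $\phi_i\colon X\dashrightarrow X_i$ is an isomorphism in codimension one. The pair $(X_i, B_i:=\phi_{i*}B)$ inherits dlt log Calabi--Yau with $N(K_{X_i}+B_i)\sim 0$, and a general fiber $F$ is a Fano-type variety. The general fiber avoids the codimension-two indeterminacy locus of $\phi_i^{-1}$, which is the key to translating fibers of $\psi_i$ into movable curves on $X$ through a general point and away from any prescribed codimension-two $Z$, yielding~(1) and~(4).

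\textbf{Step 3: find a bounded curve on a general fiber.}
Restricting to a general fiber $F$, adjunction gives a dlt log Calabi--Yau pair $(F, B_F)$ with $N(K_F+B_F)\sim 0$, and $B_F$ has a horizontal piece coming from $\lfloor B_i\rfloor$. By Birkar's boundedness of complements and BAB applied in dimension $\leq n-1$, the pair $(F,B_F)$ belongs to a bounded family depending only on $n$ and $N$. On a bounded family of Fano-type pairs one can select a covering family of rational curves $C\subset F^{\text{sm}}$ contained in the relative smooth locus, moving through a general point, and with $-K_F\cdot C$ bounded by a constant $k_0(n,N)$; together with the boundedness of the coefficients and of the Cartier index of $B_F$, this forces $B_F\cdot C$ and each $E_j|_F\cdot C$ to be bounded by some $k(n,N)$. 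The dlt condition forces at most $\dim X$ components of $\lfloor B\rfloor$ to meet at any point, which upon intersecting with a general fiber and using transversality of $C$ with $\lfloor B\rfloor$ bounds the number of $E_j$'s met by $C_i$ by $n+1$, establishing~(5); the transversality and intersection bounds in~(2) and~(3) follow from this choice of $C$ and the smoothness constraint.

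\textbf{Step 4: transport back to $X$ and assemble.}
Finally I would push the curve $C\subset F$ through $\phi_i^{-1}$, using that $\phi_i$ is an isomorphism in codimension one and that a general fiber avoids the indeterminacy, to obtain a curve $C_i\subset X$ in the smooth locus of $X$, numerically equivalent to the class $\gamma_i$ extracted in Step~1 (possibly up to a bounded positive multiple, absorbed into $k$). The properties (1)--(5) are inherited by construction, and the cone equation~(\ref{eq:nef-cone}) follows from Step~1 once we observe that the $\rr_{\geq 0}[C_i]$ span the same rays as the $\rr_{\geq 0}[\gamma_i]$. The accumulation statement is inherited directly from the abstract cone theorem.

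\textbf{Main obstacle.}
The delicate point is Step~3: producing a single universal constant $k(n,N)$ that simultaneously bounds $B'\cdot C_i$ and each $E_j\cdot C_i$ for \emph{every} extremal ray. This requires the full strength of boundedness of complements to ensure that, across all the different Mori fiber spaces $\psi_i$ attached to all the extremal rays, the resulting general fibers $F$ land in a single bounded family, and that on that family one can choose covering curves with uniformly controlled intersection with the full boundary. Guaranteeing that such curves can be arranged to miss any prescribed codimension-two subset while keeping the intersection bounds is where most of the technical work lies.
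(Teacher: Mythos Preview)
Your overall architecture matches the paper's: invoke Lehmann's movable cone theorem, attach to each $(K_X+B')$-negative extremal ray a Mori fiber space via an MMP with scaling, use boundedness on the general fiber to find a controlled curve, and transport it back. Two inaccuracies in Step~2 are harmless but worth correcting: the MMP $\phi_i\colon X\dashrightarrow X_i$ can contract divisors, so it need not be an isomorphism in codimension one (what you actually need is that ${\rm Ex}(\phi_i^{-1})\subset X_i$ has codimension at least two, which is automatic); and the pushforward pair $(X_i,B_i)$ is only lc, not dlt, since each step is $(K_X+B)$-crepant. The paper sidesteps the second issue by first observing that $(X,B')$ is $\frac{1}{N}$-lc (any divisor with $a_E(X,B')\in(0,1/N)$ would be an lc place of $(X,B)$, whose center is smooth and disjoint from $B'$, forcing $a_E(X,B')\geq 1$), a property preserved by the $(K_X+B')$-MMP and enough to invoke BAB on the Fano fiber.

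The genuine gap is your argument for~(5). Knowing that at most $\dim X$ components of $\lfloor B\rfloor$ pass through any single point does \emph{not} bound how many of the $E_j$ the curve $C_i$ can meet along its length: the curve could hit different components at different points. What needs to be bounded is the number of components of $\lfloor B_i\rfloor$ that are \emph{horizontal} over $Z_i$, since those are exactly the ones a general fiber (and hence $C_i$) sees. The paper does this via the complexity: since $\rho(X_i/Z_i)=1$, the restrictions $E_{j,F}$ of the horizontal components to a general fiber $F$ span a one-dimensional subspace of $N^1(F)$, so the complexity inequality $c(F,B_F+E_F)\geq 0$ forces at most $\dim F+1\leq n+1$ of them. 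Your dlt-at-a-point argument cannot substitute for this step.
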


We stress that in the context of Theorem~\ref{introthm:nef-cone}, we crucially use the dlt condition.
Indeed, the dlt condition and the control of the index will allow us
to control the singularities of $X$. Then, we can apply Birkar's boundedness of Fano varieties 
in the general fiber of a birational Mori fiber space structure of $X$.
This boundedness result will let us deduce (1)-(4) in the statement of Theorem~\ref{introthm:nef-cone}.
The upper bound on the number of prime components of $\lfloor B\rfloor$ that $C_i$ can intersect 
is a consequence of a computation using the complexity of log pairs (see, e.g.,~\cite{Kol92,BMSZ18,RS21}).

It is intuitive to ask how many, if any, curves $C_i$ do we need
to obtain the equality~\eqref{eq:nef-cone}.
In principle, the more divisorial log canonical centers we have, 
the more curves $C_i$ we expect to need.
In the case that $\lfloor B\rfloor$ is disconnected, 
we know that it must have at most two components~\cite[Theorem 1.2]{Bir21}.
Furthermore, if $\lfloor B\rfloor$ has exactly two components,
then the pair $(X,B)$ is birational to a $\pp^1$-link (see Definition~\ref{def:p1-links}).
The next theorem asserts that such $\pp^1$-link structure is unique.
This means that we can find a unique canonically-negative extremal movable curve in the cone~\eqref{eq:nef-cone}.

\begin{introthm}\label{introthm:uniqueness} 
Let $(X,B)$ be a dlt log Calabi-Yau pair.
Assume that the set of log canonical centers of $(X,B)$ is disconnected.
Let $B'=B-\lfloor B\rfloor$.
Then, there exists a unique $(K_X+B')$-negative extremal nef curve $C_0$
for which 
\[
\overline{NE}_1(X)_{K_X+B' \geq 0} +
\overline{NM}_1(X) = 
\overline{NE}_1(X)_{K_X+B' \geq 0} +
\rr_{\geq 0}[C_0].
\]
Furthermore, the following intersection properties hold:
\[
B'\cdot C_0=0,\quad 
E_1\cdot C_0=1,\quad 
E_2\cdot C_0=1,\text{ and }\quad 
K_X\cdot C_0=-2.
\]
In particular, if $-(K_X+B')$ is ample, then 
$X\simeq \pp^1$, $B'=0$, and $B=\{0\}+\{\infty\}$.
\end{introthm}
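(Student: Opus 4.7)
The plan is to combine Theorem~\ref{introthm:nef-cone} with~\cite[Theorem~1.2]{Bir21}. Since the set of log canonical centers is disconnected, \cite[Theorem~1.2]{Bir21} implies that $\lfloor B \rfloor$ has exactly two prime components $E_1, E_2$, and that $(X,B)$ is crepant birational to a $\pp^1$-link $f\colon X^\sharp \to Z$ in the sense of Definition~\ref{def:p1-links}, with the strict transforms of $E_1$ and $E_2$ as the two sections. Because both $(X,B)$ and $(X^\sharp,B^\sharp)$ are $\qq$-factorial dlt minimal models of the same log Calabi--Yau pair, the crepant birational map is an isomorphism in codimension one, so numerical classes of movable curves on the two models correspond bijectively.

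Next, I apply Theorem~\ref{introthm:nef-cone} to produce the extremal movable curves, and define $C_0$ to be the strict transform on $X$ of a general fiber $F \simeq \pp^1$ of $f$. The asserted intersection numbers are read off from the $\pp^1$-link structure: $E_j \cdot C_0 = 1$ since each $E_j$ is a section meeting the $\pp^1$-fiber in one point; $B' \cdot C_0 = 0$ because $B'$ is $f$-vertical (the relation $K_{X^\sharp} + E_1 + E_2 \equiv_f 0$ is intrinsic to a $\pp^1$-link, and combining with $K_X + B \equiv 0$ forces $B' \equiv_f 0$); and $K_X\cdot C_0 = -(E_1+E_2+B')\cdot C_0 = -2$ from $K_X + B \equiv 0$.

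For uniqueness, let $[C]$ be any $(K_X+B')$-negative extremal nef class. Running the usual Mori-fiber-space analysis on a crepant model, $[C]$ is represented by a movable curve whose general deformation meets $E_1+E_2$ with positive intersection; movability together with the fact that $E_1$ and $E_2$ are disjoint divisorial lc centers forces the curve to meet both components simultaneously, and the associated contraction must coincide birationally with $f$. I expect the main obstacle to lie here, in ruling out a second $\pp^1$-link structure on $(X,B)$ compatible with the same pair of sections. For this step, one uses that minimal models of log Calabi--Yau pairs are unique up to flops, and that a flop connecting two $\pp^1$-fibrations sharing both sections must be trivial, yielding $[C] \in \rr_{\geq 0}[C_0]$.

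The final assertion is then immediate from Theorem~\ref{introthm:nef-cone}. If $-(K_X+B')$ is ample, then $\overline{NE}_1(X)_{K_X+B'\ge 0} = \{0\}$, and the displayed decomposition collapses to $\overline{NM}_1(X) = \rr_{\geq 0}[C_0]$. One-dimensionality of the movable cone, combined with the $\pp^1$-link description of $X^\sharp$, forces $\dim Z = 0$, for otherwise $X^\sharp \to Z$ would give additional independent movable classes. Hence $X\simeq \pp^1$; from $K_X+B\equiv 0$ and $B = B' + E_1 + E_2$ with $\deg B = 2$ and each $E_j$ a reduced point, we deduce $B' = 0$ and $B = E_1 + E_2$, which may be normalized to $\{0\}+\{\infty\}$ after applying an automorphism of $\pp^1$.
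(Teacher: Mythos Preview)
Your reduction to a $\pp^1$-link via \cite[Theorem~1.2]{Bir21} is natural, and the construction of $C_0$ together with its intersection numbers is essentially correct: a general fibre of $X^\sharp\to Z$ misses both the indeterminacy locus of $X\dashrightarrow X^\sharp$ and the images of any contracted divisors, so the computations go through. However, the claim that the crepant birational map $X\dashrightarrow X^\sharp$ is an isomorphism in codimension one is false, and this undercuts your uniqueness argument. For a concrete counterexample, take $X^\sharp=\pp^1\times\pp^1$ with $B^\sharp=E_1+E_2+B'^\sharp$, where $E_1,E_2$ are two disjoint sections of the first projection and $B'^\sharp$ is a sum of fibres with fractional coefficients making $K_{X^\sharp}+B^\sharp\equiv 0$. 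Blowing up a point on $E_1$ away from $B'^\sharp$ produces a $\qq$-factorial dlt log Calabi--Yau pair $(X,B)$ with disconnected lc locus, yet $X\to X^\sharp$ contracts the exceptional $(-1)$-curve. In this example $\overline{NM}_1(X)$ has two $(K_X+B')$-negative extremal rays, only one of which is $[C_0]$; the displayed equality still holds because the other ray is absorbed into $\overline{NE}_1(X)_{K_X+B'\ge 0}+\rr_{\ge 0}[C_0]$, but your argument does not see this.

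The sentence ``a flop connecting two $\pp^1$-fibrations sharing both sections must be trivial'' is therefore both unsupported and beside the point: even after granting it, you would only conclude that $X^\sharp$ has a unique $\pp^1$-link structure, not that the larger cone $\overline{NM}_1(X)$ contributes no new $(K_X+B')$-negative directions modulo $\overline{NE}_1(X)_{K_X+B'\ge 0}$. The paper handles uniqueness quite differently. It first proves, by running a $(K_X+B')$-MMP with scaling, that \emph{every} $(K_X+B')$-negative extremal nef ray is generated by a curve with the stated intersection numbers $E_1\cdot C=E_2\cdot C=1$, $B'\cdot C=0$, $K_X\cdot C=-2$ (Lemma~\ref{MMP-reg-zero}). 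Uniqueness is then established by induction on $\dim X$: if two such rays existed one finds an ample divisor $A$ cutting out a two-dimensional face, runs the $(K_X+B'+A)$-MMP to a model $X_0\to Z$ with $\rho(X_0/Z)=2$, and shows via the intersection constraints that $-K_{X_0}\sim_{\qq,Z}2E_{1,0}\sim_{\qq,Z}2E_{2,0}$ is semiample and big over $Z$; the inductive hypothesis (packaged as Lemma~\ref{lem:semiample-big}) then forces the general fibre to be $\pp^1$, contradicting the existence of two independent vertical curves. Your proposal contains no analogue of this step.
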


The regularity of a klt singularity is the largest dimension among the dual complexes of lc complements.
If the regularity is equal to the dimension minus $1$,
then we expect these singularities to behave similarly to toric singularities~\cite{Mor20b}.
On the other hand, singularities of regularity zero are exceptional~\cite{Mor18b}.
There has been some recent development in the topology of klt singularities~\cite{LLM19,Bra20,BFMS20,BM21,Mor21}.
In particular, for a $n$-dimensional klt singularity $(X;x)$, we know that its regional fundamental group is almost a finite abelian group of rank at most ${\rm reg}(X;x)$.
Thus, the regularity controls to a large extent the topology of the singularity.
We also expect the regularity to be a central invariant in the study of minimal log discrepancies.

In the case that $(X;x)$ is a $n$-dimensional klt singularity of regularity one,
we can choose a $N$-complement $(X,B)$ so that $\mathcal{D}(X,B)$ is either a circle or a closed interval (see Definition~\ref{def:complement}).
Taking a dlt modification $(Y,B_Y)$ of $(X,B;x)$
and performing adjunction to each of the exceptional divisors, 
we arise to the situation of Theorem~\ref{introthm:nef-cone}. 
In this case, most of the exceptional divisors of $\pi\colon Y\rightarrow X$ are birational to $\pp^1$-links, i.e., they are models as in Theorem~\ref{introthm:uniqueness}.
If they exist, the exceptional divisors corresponding to the boundary of the dual complex $\mathcal{D}(Y,B_Y)$ will be plt log Calabi-Yau pairs
with a unique log canonical center.
Hence, we can find some nice curves in these exceptional divisors by applying Theorem~\ref{introthm:nef-cone} and Theorem~\ref{introthm:uniqueness}. 
Thus, there is a natural curve to choose corresponding to each vertex of the dual complex. Then,
we can write a linear system of equations that 
allows us to compute each $a_{E_i}(X)$.
The value $a_{E_i}(X)$ will be expressed
in terms of the intersection of the aforementioned curves with the 
components of $B_Y$.
Nevertheless, this linear system of equations is not entirely trivial. 
For instance, the number of variables can go up as we consider different klt singularities of regularity one.
To remedy this, we will construct a surface toric singularity whose resolution of singularities contains the same combinatorial data (i.e., the same linear system) as that of $(Y,B_Y)$. 
This will allow us to conclude the following theorem.

\begin{introthm}\label{introthm:ACC}
Let $n$ be a positive integer.
There exists a constant $N:=N(n)$,
only depending on $n$, which satisfies the following.
Let $\mathcal{M}_{n,1}$ be the set of
minimal log discrepancies 
of $n$-dimensional $\qq$-factorial klt singularities of regularity one.
Then, the set
\[
\mathcal{M}_{n,1}\cap \left(0,\frac{1}{N} \right)
\]
satisfies the ascending chain condition.
\end{introthm}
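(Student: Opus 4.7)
The plan is to reduce the ACC for $n$-dimensional klt singularities of regularity one to the ACC for minimal log discrepancies of normal surface toric singularities, which is classical. I would begin by fixing $N=N(n)$ large enough that Birkar's theory of complements produces an $N$-complement $(X,B;x)$ realizing the regularity of every $n$-dimensional klt singularity $(X;x)$. For singularities of regularity one with mld strictly less than $1/N$, the dual complex $\mathcal{D}(X,B)$ is a $1$-dimensional simplicial complex without branching, so it is either a closed interval or a circle.

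Second, I would pass to a dlt modification $\pi\colon (Y,B_Y)\to (X,B)$. The $\pi$-exceptional prime divisors $E_1,\dots,E_r$ are in bijection with the vertices of $\mathcal{D}(Y,B_Y)$ and form a chain or a cycle according to the two possibilities above. Adjunction produces, for each $i$, a dlt log Calabi--Yau pair $(E_i,B_{E_i})$ of dimension $n-1$ whose index still divides $N$. Interior vertices give $\lfloor B_{E_i}\rfloor$ with exactly two components: Theorem~\ref{introthm:uniqueness} then furnishes a canonically distinguished movable curve $C_i\subset E_i$ in the smooth locus with $E_{i-1}\cdot C_i = E_{i+1}\cdot C_i = 1$ and $B_{E_i}'\cdot C_i=0$. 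The at most two endpoint vertices of a chain give $\lfloor B_{E_i}\rfloor$ with a single component, to which Theorem~\ref{introthm:nef-cone} applies to supply a movable smooth-locus curve $C_i$ whose intersection numbers with all components of $B_{E_i}$ are uniformly bounded by $k=k(n,N)$.

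Third, I would lift each $C_i$ to $Y$ and intersect the identity $K_Y+B_Y=\pi^*(K_X+B)$ with them. Since $C_i\subset E_i$ is $\pi$-contracted, the pullback vanishes, and expanding via $K_Y = \pi^*K_X + \sum_j (a_j - 1)E_j$ with $a_j := a_{E_j}(X,0)$ produces a linear system for the $a_j$ whose coefficient matrix encodes the tridiagonal (respectively circulant) adjacency of the chain or cycle, the controlled self-intersection numbers $E_i\cdot C_i$, and the bounded inhomogeneous contributions coming from $(B_Y-\sum_j E_j)\cdot C_i$. The mld of $(X;x)$ equals $\min_i a_i$. I would then construct a normal surface toric singularity $(S;s)$ whose minimal resolution has the same dual graph with matching Hirzebruch--Jung self-intersections and matching boundary data; solving the two identical linear systems yields $\mld(X;x)=\mld(S;s)$. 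The ACC for mlds of toric surface singularities lying below $1/N$ is classical, and transfers to $\mathcal{M}_{n,1}\cap(0,1/N)$.

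The main obstacle is this last matching step. One must show that the finite set of possible intersection-number data produced above is rigid enough to single out a \emph{specific} toric surface singularity whose mld genuinely equals $\min_i a_i$, rather than merely bounding it. This rests on tracking how $B_Y'$ contributes to $B_{E_i}$ under adjunction and exploiting the exact rigidity granted by Theorem~\ref{introthm:uniqueness} at interior vertices together with the complexity bound of Theorem~\ref{introthm:nef-cone}(5) at the endpoints, which forces $C_i$ to meet only the expected components of $\lfloor B_{E_i}\rfloor$ and keeps the system finite-dimensional as the singularity varies.
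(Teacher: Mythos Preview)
Your overall strategy matches the paper's: bounded $N$-complements, a dlt modification, curves on each exceptional $E_i$ coming from Theorems~\ref{introthm:nef-cone} and~\ref{introthm:uniqueness}, a tridiagonal linear system, and a reduction to toric surface mlds. That is exactly how the paper proceeds in Theorem~\ref{thm:reg-one-coeff}.

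There is, however, a genuine gap. You implicitly assume that every prime component $E_i$ of $\lfloor B_Y\rfloor$ maps to the closed point $x$, so that the adjoint pair $(E_i,B_{E_i})$ is a \emph{projective} log Calabi--Yau pair to which Theorems~\ref{introthm:nef-cone} and~\ref{introthm:uniqueness} apply, and so that $\mathrm{mld}(X;x)=\min_i a_{E_i}(X)$. Neither is true in general. The paper's Lemma~\ref{lem:class-dual-comp} shows that the dlt dual complex falls into four cases, and in three of them at least one $E_i$ has center $Z_i\supsetneq\{x\}$. In the circle case this is forced: if every $E_i$ mapped to $x$ the dual complex would be collapsible (Lemma~\ref{lem:dual-comp-coll}), contradicting that it is a circle. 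For such an $E_i$ the pair $(E_i,B_{E_i})$ is only log Calabi--Yau \emph{over} $Z_i$, so your curve-producing theorems do not apply to it; and since $c_X(E_i)\neq x$, the value $a_{E_i}(X)$ does not enter $\mathrm{mld}(X;x)$ at all, so $\min_i a_i$ is not the quantity you want.

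The paper repairs this by treating those $E_i$ with $c_X(E_i)\supsetneq\{x\}$ separately: restricting to a general fiber of $E_i\to Z_i$ one gets a genuine rationally connected klt log Calabi--Yau pair of bounded index, and boundedness up to flops (Lemma~\ref{lem:not-point}) forces $a_{E_i}(X,\Delta)$ into a fixed ACC set. These controlled values are then fed as \emph{known} boundary data into the linear system, and only the $E_i$ mapping to $x$ remain as unknowns. Concretely, the paper first extracts the ``bad'' $E_i$ alone, observes the remaining log canonical places sit in a toroidal locus, and then performs toroidal blow-ups; this is what guarantees the clean tridiagonal system and the exact match with a toric surface mld. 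Your proposal is correct precisely in the paper's Step~4 (all $E_i$ over $x$), but needs this extra input to handle Steps~1--3.
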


Among all exceptional divisors over a klt germ $(X;x)$ those that compute a log canonical place of an lc pair $(X,B;x)$ play a central role in the proof of the previous theorem.
These divisors enjoy many good properties. They carry the structure of log Calabi-Yau pairs~\cite{Hac14}.
Furthermore, they can be extracted by a projective birational morphism in such a way that they are the unique exceptional divisor~\cite{Mor20}.
In order to prove Theorem~\ref{introthm:ACC}, we will show that the minimum among log discrepancies at divisors that compute log canonical places satisfies the ascending chain condition.
In particular, for every klt singularity of regularity one, 
we will prove the existence of such a divisor with log discrepancy bounded above by a constant only depending on the dimension.

\begin{introcor}\label{introcor:bounded-ext}
Let $n$ be a positive integer.
There exists a constant $a(n)$, only depending on $n$, which satisfies the following.
Let $(X;x)$ be a $n$-dimensional $\qq$-factorial klt singularity of regularity one.
Then, there exists a projective birational morphism 
$\pi\colon Y\rightarrow X$, with purely divisorial exceptional loci,
which extracts a unique divisor $E$ with $a_E(X;x)\leq a(n)$.
\end{introcor}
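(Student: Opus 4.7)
The plan is to combine the machinery already used in the sketch of Theorem~\ref{introthm:ACC} with an extraction result for log canonical places. First I would choose, using boundedness of complements for klt singularities, an $N$-complement $(X,B)$ of $(X;x)$ whose dual complex $\mathcal{D}(X,B)$ realizes the regularity one condition (a circle or a closed interval), with $N=N(n)$ depending only on the dimension. Next I would take a dlt modification $\pi\colon Y\rightarrow X$ of $(X,B)$ with $K_Y+B_Y=\pi^{*}(K_X+B)$, so that each exceptional prime divisor $E_i$ of $\pi$ is a log canonical place of $(X,B)$ and corresponds to a vertex of $\mathcal{D}(Y,B_Y)$. In particular $a_{E_i}(X;x)$ is determined by the coefficient of $E_i$ in $B_Y$ via the complement structure.

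The key step is to bound the minimum of the $a_{E_i}(X;x)$'s from above by a constant depending only on $n$. For this I would perform adjunction from $(Y,B_Y)$ to each $E_i$, obtaining a dlt log Calabi-Yau pair $(E_i,B_{E_i})$ of index bounded by $N$. Depending on whether the vertex corresponding to $E_i$ is an interior or a boundary point of the one-dimensional dual complex, either Theorem~\ref{introthm:uniqueness} (in the $\pp^1$-link case, i.e., $\lfloor B_{E_i}\rfloor$ disconnected with exactly two components) or Theorem~\ref{introthm:nef-cone} (otherwise) produces movable, smooth-locus curves $C_i\subset E_i$ whose intersections with $B_{E_i}$ and with the $\lfloor B_{E_i}\rfloor$-components are bounded by $k=k(n,N)$. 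Pushing these curves back to $Y$, the identity $(K_Y+B_Y)\cdot C_i=0$ together with the adjunction formula relates $a_{E_i}(X;x)$ to the discrepancies at the other $E_j$'s through an explicit linear equation whose coefficients are intersection numbers bounded by $k$.

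Collecting these relations over all vertices of $\mathcal{D}(Y,B_Y)$ produces a linear system in the unknowns $a_{E_i}(X;x)$. Since the dual complex is one-dimensional (a circle or an interval), its combinatorial structure is the same as that of the exceptional graph of a resolution of a two-dimensional cyclic quotient (or cusp) singularity; this is the auxiliary toric surface singularity alluded to in the introduction. Transferring the linear system to this toric model makes it effectively one-dimensional and lets me bound the minimum of its solutions in terms only of $n$ and of the bounded intersection data. I then take $E$ to be the divisor $E_i$ attaining this minimum, and set $a(n)$ to the resulting bound.

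Finally, for the uniqueness in the conclusion I would invoke the extraction result~\cite{Mor20}: since $E$ is a log canonical place of $(X,B)$, there is a projective birational morphism $\pi'\colon Y'\rightarrow X$ with purely divisorial exceptional locus whose unique exceptional divisor is $E$. This gives the desired $\pi\colon Y\rightarrow X$. The main obstacle I expect is controlling the linear system of Step three: the number of unknowns varies with the singularity, and only by reducing to the toric surface model can the size of the system be eliminated from the bound. All other ingredients (existence of $N$-complements, existence of dlt modifications, the curves from Theorems~\ref{introthm:nef-cone} and \ref{introthm:uniqueness}, and the extraction result) are essentially off-the-shelf at this point.
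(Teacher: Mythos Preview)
Your proposal is correct and follows essentially the same approach as the paper. The paper's proof simply cites ``the proof of Theorem~\ref{thm:reg-one-coeff}'' to obtain the ACC (hence upper bound) for the log discrepancies at log canonical places mapping onto $x$, and then invokes \cite[Theorem~1]{Mor20} for the extraction; you have unpacked exactly the ingredients of that cited proof (the $N$-complement, the dlt modification, the curves from Theorems~\ref{introthm:nef-cone} and~\ref{introthm:uniqueness}, and the reduction to a toric surface linear system).
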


Once the ascending chain condition is established, 
we can study the accumulation points of the set of minimal log discrepancies.
In the case of exceptional singularities~\cite{Mor18b}, the only accumulation point is zero.
In the case of singularities of regularity one, we prove that the accumulation points of the set of minimal log discrepancies only accumulate to zero.

\begin{introcor}\label{introcor:accum}
Let $n$ be a positive integer.
There exists a constant $N:=N(n)$,
only depending on $n$,
which satisfies the following.
Let $\mathcal{M}_{n,1}$ be the set of minimal log discrepancies of $n$-dimensional $\qq$-factorial klt singularities of regularity one.
Then, the accumulation points of the set
\[ 
{\rm Acc}\left( 
\mathcal{M}_{n,1} \cap \left(0,\frac{1}{N}\right) 
\right) 
\]
only accumulate to zero.
\end{introcor}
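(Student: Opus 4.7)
The strategy is to combine Theorem~\ref{introthm:ACC} with an abstract observation about ACC sets and the surface-toric reduction used in its proof. Set $S:=\mathcal{M}_{n,1}\cap(0,1/N)$; this set satisfies ACC by Theorem~\ref{introthm:ACC}.

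I would first record the following general principle: if $T\subseteq(0,1/N)$ satisfies ACC, then so does ${\rm Acc}(T)$. Given a strictly increasing sequence $t_1<t_2<\cdots$ in ${\rm Acc}(T)$, the ACC of $T$ forces accumulation in $T$ to occur only from above, so one can pick $s_k\in T$ with $t_k<s_k<t_{k+1}$; this produces a strictly increasing sequence in $T$, contradicting its ACC property. Iterating, ${\rm Acc}(S)$ and ${\rm Acc}({\rm Acc}(S))$ both satisfy ACC, and in particular accumulation in ${\rm Acc}({\rm Acc}(S))$ occurs only from above. The corollary thus reduces to showing that no $t^*>0$ is an accumulation point of ${\rm Acc}({\rm Acc}(S))$.

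Assume for contradiction that such a $t^*>0$ exists. Three iterations of the ``accumulation from above'' property yield triply indexed elements $s_{k,j,\ell}\in S$ with $s_{k,j,\ell}\searrow t_{k,j}\in{\rm Acc}(S)$ as $\ell\to\infty$, $t_{k,j}\searrow t_k\in{\rm Acc}({\rm Acc}(S))$ as $j\to\infty$, and $t_k\searrow t^*$ as $k\to\infty$. By the proof of Theorem~\ref{introthm:ACC}, each $s_{k,j,\ell}$ is realized as a log discrepancy $a_{E_i}(X;x)$ on a dlt modification, computed as a linear combination of intersection numbers $B'\cdot C_i$ and $E_m\cdot C_i$ (bounded by the constant $k(n,N)$ of Theorem~\ref{introthm:nef-cone}) whose coefficients are read off from an associated surface toric singularity. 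Since $t^*>0$, the sequence stays bounded away from $0$, forcing the underlying combinatorial data of the toric models (continued-fraction entries, number of components, etc.) to take only finitely many values along the sequence; restricting to a subsequence in a fixed combinatorial type gives only finitely many possible values of $s_{k,j,\ell}$, contradicting the strict descents at all three levels.

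The main obstacle is the last step: making rigorous the slogan ``mld bounded away from $0$ implies bounded combinatorial type'' in a form strong enough to iterate through three nested accumulations. This should follow from the explicit formulas already present in the proof of Theorem~\ref{introthm:ACC}: once the shape of the dlt model and the associated toric surface is fixed, only finitely many log discrepancies can arise, so each successive accumulation operation must drive one further combinatorial parameter to infinity; after three such degenerations every remaining parameter is exhausted, the limiting mld is necessarily $0$, and the hypothesis $t^*>0$ is contradicted.
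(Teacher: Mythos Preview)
Your abstract observation that ${\rm Acc}(T)$ inherits ACC from $T$ is correct and useful, but the core of your argument---the claim that $s_{k,j,\ell}$ bounded away from zero forces the combinatorial data of the associated toric surfaces to take finitely many values---is false. Already for toric surface singularities with \emph{no} boundary, there are infinitely many pairwise non-isomorphic cones (e.g.\ the $A_m$ singularities, all with ${\rm mld}=1$, or the $\frac{1}{q}(1,1)$ singularities with ${\rm mld}=2/q$ accumulating to $0$ while the continued-fraction data varies without bound). Bounding the mld away from zero does \emph{not} bound the continued-fraction length or entries, so your proposed inductive exhaustion of ``combinatorial parameters'' over three nested limits cannot be made to work.

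The paper's argument is both shorter and structurally different. It does not attempt any boundedness of the toric models. Instead it exploits that for $\mathcal{M}_{n,1}$ the coefficient set $\Lambda$ is empty (hence finite), and then invokes the \emph{Furthermore} clauses of Lemma~\ref{lem:not-point} and Lemma~\ref{lem:k-th-surf-toric-mld}: when $\Lambda$ is finite, the auxiliary set $\mathcal{M}(n,N,\Lambda)$ of log discrepancies at the non-point centers has accumulation points only at zero, so the coefficients $c_1,c_r,\beta_1,\beta_r$ fed into the surface toric germ lie in a finite set (once bounded away from zero). Lemma~\ref{lem:k-th-surf-toric-mld} then says directly that toric surface mlds with coefficients in a finite set have accumulation points accumulating only to zero. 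No triple-nested contradiction is needed; the desired conclusion is already built into those two lemmas. To repair your argument you would need to replace the combinatorial-boundedness heuristic by precisely this finiteness propagation.
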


We expect the circle of ideas introduced in this paper to help to prove the corresponding statements in the general case of klt singularities. 
This will be considered in forthcoming papers by the author.

\subsection*{Acknowledgements} 
The author would like to thank Brian Lehmann, Mihai Fulger, and 
Vyacheslav Shokurov, for many useful comments. 

\section{Preliminaries}

In this section,
we recall some preliminaries 
that will be used in this article:
singularities of the minimal model program,
theory of complements,
dual complexes, 
regularity, and toric surface singularities.
Throughout this article, we work over an algebraically closed field $\kk$ of characteristic zero.
All the considered varieties are normal and quasi-projective unless otherwise stated.

\subsection{Singularities and complements}

In this subsection, we recall the basic notions of the singularities of the minimal model program and the definition of complements (see, e.g.,~\cite{KM98,Kol13,Bir20}).

\begin{definition}
{\em 
Let $f\colon Y\rightarrow X$ be a morphism.
We say that $f$ is a contraction if $f_*\mathcal{O}_Y=\mathcal{O}_X$.
In particular, if $Y$ is normal and $f$ is a contraction, then $X$ is normal.
A {\em fibration} is a contraction with positive dimensional general fiber.
}
\end{definition}

\begin{definition}
{\em 
A {\em sub-pair} is a couple $(X,\Delta)$ where $X$ is a normal quasi-projective variety and $\Delta$ is a $\qq$-divisor such that $K_X+\Delta$ is a $\qq$-Cartier $\qq$-divisor.
A {\em pair} (or {\em log pair}) is a sub-pair with $\Delta\geq 0$.
We may write $(X,\Delta;x)$ to denote a sub-pair with a base point.
}
\end{definition}

\begin{definition}
{\em
Let $X$ be a normal quasi-projective variety.
A {\em prime divisor over $X$} is a prime divisor which lies in a normal variety admitting a projective birational morphism to $X$.
This means that we can find a projective birational morphism $\pi\colon Y\rightarrow X$ so that $E\subset Y$ is a prime divisor.
The center of $E$ on $X$ is the image of $E$ on $X$ and is denoted by $c_X(E)$.

Let $(X,\Delta)$ be a log pair.
Let $E$ be a prime divisor over $X$.
The {\em log discrepancy} of $(X,\Delta)$ at $E$ is defined to be 
\[
a_E(X,\Delta):=
1-{\rm coeff}_E(K_Y-\pi^*(K_X+\Delta)).
\]
Here, as usual, we pick $K_Y$ so that
$\pi_* K_Y=K_X$.

Let $(X,\Delta)$ be a log pair.
A {\em log resolution} of $(X,\Delta)$ is a projective birational morphism $\pi\colon Y\rightarrow X$ satisfying the following conditions:
\begin{enumerate}
    \item $Y$ is a smooth variety,
    \item the exceptional locus of $\pi$ is purely divisorial, and
    \item $\pi^{-1}_*\Delta + {\rm Ex}(\pi)_{\rm red}$ is a reduced divisor with simple normal crossing support.
\end{enumerate}
By Hironaka's resolution of singularities,
we know that any log pair admits a log resolution.
}
\end{definition}

\begin{definition}
{\em 
Let $(X,\Delta)$ be a log pair.
We say that $(X,\Delta)$ is {\em Kawamata log terminal} (or {\em klt} for short) if all its log discrepancies are positive.
This means that $a_E(X,\Delta)>0$ for every prime divisor $E$ over $X$.
It is known that a log pair $(X,\Delta)$ is klt if and only if all 
the log discrepancies corresponding 
to prime divisors on a log resolution of $(X,\Delta)$ are positive.

Analogously, we say that a log pair $(X,\Delta)$ is {\em log canonical} (or {\em lc} for short) if all its log discrepancies are non-negative, i.e., 
$a_E(X,\Delta)\geq 0$ for every prime divisor $E$ over $X$.
A pair $(X,\Delta)$ is log canonical if and only if all the log discrepancies corresponding to divisors on a log resolution are non-negative.

A pair $(X,\Delta)$ is said to be {\em $\epsilon$-log canonical} if all its log discrepancies are at least $\epsilon$.
}
\end{definition} 

\begin{definition}
{\em
Let $(X,\Delta;x)$ be a sub-pair.
We define the {\em minimal log discrepancy}
of $(X,\Delta)$ at $x$ to be 
\[
{\rm mld}(X,\Delta;x) :=
\min\{ a_E(X,\Delta) \mid c_E(X)=x\}. 
\]
}
\end{definition}

\begin{definition}
{\em 
Let $(X,\Delta)$ be a log pair.
A {\em log canonical place} of $(X,\Delta)$ is a prime divisor $E$ over $X$ so that $a_E(X,\Delta)=0$.
A {\em non-klt place} of $(X,\Delta)$ is a prime divisor $E$ over $X$ so that 
$a_E(X,\Delta)\leq 0$.
A {\em log canonical center} of $(X,\Delta)$
is the image on $X$
of a log canonical place of $(X,\Delta)$.
A {\em non-klt center} of $(X,\Delta)$
is the image on $X$
of a non-klt place of $(X,\Delta)$.
}
\end{definition}

\begin{definition}
{\em 
A log pair $(X,\Delta)$ is said to be {\em divisorially log terminal} (or {\em dlt})
if there exists an open subset $U\subset X$ which satisfies the following conditions:
\begin{enumerate}
    \item $U$ is smooth and $\Delta|_U$ has simple normal crossing support, 
    \item the coefficients of $\Delta$ are less or equal than one, and
    \item all the non-klt centers of $(X,\Delta)$ intersect $U$ and are given by strata of $\lfloor \Delta \rfloor$.
\end{enumerate}
A pair $(X,\Delta)$ is said to be
{\em purely log terminal} or {\em plt} for short, 
if it is dlt and all its log canonical centers are divisors on $(X,\Delta)$.
}
\end{definition}

The following lemma is well-known
and usually referred to as the existence of $\qq$-factorial dlt modifications (see, e.g.,~\cite[Theorem 3.1]{KK10}).

\begin{lemma}\label{lem:existence-dlt-mod}
{\em 
Let $(X,\Delta)$ be a log canonical pair.
Then, there exists a projective birational morpism $\pi\colon Y\rightarrow X$ so that the following conditions are satisfied:
\begin{enumerate}
    \item $Y$ is $\qq$-factorial,
    \item the exceptional locus of $\pi$ is purely divisorial,
    \item every prime divisor $E$ contracted by $\pi$ satisfies $a_E(X,\Delta)=0$, i.e., $E$ is a log canonical place of $(X,\Delta)$, and 
    \item the pair $(Y,\Delta_Y)$ is dlt where $K_Y+\Delta_Y = \pi^*(K_X+\Delta)$.
\end{enumerate}
}
\end{lemma}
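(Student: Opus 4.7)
The plan is to construct $\pi$ by resolving $(X,\Delta)$ and then running a relative minimal model program designed to contract exactly the exceptional divisors with strictly positive log discrepancy.

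First, I would take a log resolution $\varphi\colon W\rightarrow X$ of $(X,\Delta)$ and set $\Delta_W := \varphi^{-1}_*\Delta + \mathrm{Ex}(\varphi)_{\mathrm{red}}$. By construction, $(W,\Delta_W)$ is log smooth, hence $\qq$-factorial and dlt, and one may write
\[
K_W + \Delta_W \;=\; \varphi^*(K_X+\Delta) \;+\; F, \qquad F \;=\; \sum_{E\subset \mathrm{Ex}(\varphi)} a_E(X,\Delta)\, E.
\]
Since $(X,\Delta)$ is lc, every coefficient of $F$ is nonnegative, so $F$ is an effective $\varphi$-exceptional divisor whose support consists precisely of the exceptional prime divisors that we aim to contract.

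Next, I would run a $(K_W+\Delta_W)$-MMP over $X$. Because $K_W + \Delta_W \sim_{\qq,X} F$ with $F\geq 0$ and $\varphi$-exceptional, the negativity lemma implies that every divisorial step contracts a component of $\mathrm{Supp}(F)$ and that every flipping locus lies inside $\mathrm{Supp}(F)$. Appealing to the relative version of BCHM for dlt pairs whose log canonical class is effective over the base, this MMP terminates at a $\qq$-factorial model $\pi\colon Y\rightarrow X$ on which the pushforward of $F$ vanishes. Consequently, the only $\pi$-exceptional divisors surviving on $Y$ are log canonical places of $(X,\Delta)$, which yields conclusions (2) and (3).

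To finish, I would note that $\qq$-factoriality and the dlt property are preserved by each step of the MMP, so $Y$ is $\qq$-factorial and the pair $(Y,\Delta_Y)$ is dlt, where $\Delta_Y$ is the sum of $\pi^{-1}_*\Delta$ and the surviving exceptional divisors (each an lc place, taken with coefficient one); the crepant equality $K_Y+\Delta_Y = \pi^*(K_X+\Delta)$ follows from the displayed identity above once $F$ has been pushed forward to zero on $Y$. The main obstacle throughout is the termination of the relative MMP, and this is where BCHM (together with its dlt enhancement due to Birkar and Hacon--Xu) does the substantive work; granting termination, every remaining property is a formal consequence of the negativity lemma and the standard invariance properties of MMP steps.
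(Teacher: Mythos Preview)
The paper does not give its own proof of this lemma; it is stated as well-known with a reference to \cite[Theorem 3.1]{KK10}. Your proposal is precisely the standard argument behind that reference: log resolve, add the reduced exceptional to the boundary, and run a relative $(K_W+\Delta_W)$-MMP over $X$, using that $K_W+\Delta_W\sim_{\qq,X}F\geq 0$ is exceptional together with BCHM and special termination to conclude. So your approach is correct and matches the literature the paper defers to.

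One small point: your sentence ``which yields conclusions (2) and (3)'' only directly justifies (3). Condition (2), that $\mathrm{Ex}(\pi)$ is purely divisorial, does not follow merely from knowing which exceptional \emph{divisors} survive; in general, if $X$ is not $\qq$-factorial, a $\qq$-factorial model over it can have small exceptional locus. In practice one either assumes $X$ is $\qq$-factorial (as the paper does in its applications), or one argues separately that the output of this particular MMP has no small exceptional components. This is a minor presentational gap rather than a flaw in the strategy, and the cited references handle it.
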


We conclude this subsection by recalling the definitions of complements.

\begin{definition}
\label{def:complement}
{\em 
Let $X\rightarrow Z$ be a projective morphism
and $z\in Z$ be a closed point.
Let $(X,\Delta)$ be a pair.
We say that $B\geq \Delta$ is a {\em $\qq$-complement} of $(X,\Delta)$ over $z\in Z$, if the following conditions are satisfied:
\begin{enumerate}
    \item $(X,B)$ has log canonical singularities, and 
    \item $K_X+B\sim_{\qq,Z} 0$ holds over a neighborhood of $z\in Z$.
\end{enumerate}
In the case that $X\rightarrow Z$ is the identity morphism, then a $\qq$-complement is nothing else than a pair $(X,B)$
with $B\geq \Delta$, which is log canonical around the point $x\in X$.
We say that $B\geq \Delta$ is a {\em $N$-complement} of $(X,\Delta)$ over $Z$, if the following conditions are satisfied:
\begin{enumerate}
    \item $(X,B)$ has log canonical singularities, and 
    \item $N(K_X+B)\sim_Z 0$ holds over a neighborhood of $z\in Z$.
\end{enumerate}
In the case that $X\rightarrow Z$ is the identity morphism, then a $N$-complement is a log canonical pair $(X,B)$ for which $B\geq \Delta$ and $N(K_X+B)\sim 0$ around $x\in X$.
}
\end{definition}

\subsection{Rationally connected varieties}
In this subsection, we prove a lemma regarding rationally connected varieties admitting a log Calabi-Yau klt pair structure. 
In this article, we only use a weak form of Lemma~\ref{lem:RC-curve} where (3) is not considered.
We expect the full strength of the lemma to be useful in forthcoming research.

\begin{lemma}\label{lem:acyclic}
Let $X$ be a projective rationally connected variety with klt singularities.
Then, $\mathcal{O}_X$ is an acyclic sheaf.
\end{lemma}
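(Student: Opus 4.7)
The plan is to reduce the problem to the smooth case via a resolution of singularities, and then invoke the standard vanishing for smooth projective rationally connected varieties.

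First, I would pick a log resolution $\pi \colon Y \to X$. Since $X$ has klt singularities, in particular $X$ has rational singularities by the classical theorem of Kempf/Kollár--Mori, which gives $\pi_* \mathcal{O}_Y = \mathcal{O}_X$ and $R^i \pi_* \mathcal{O}_Y = 0$ for every $i > 0$. The Leray spectral sequence for $\pi$ then degenerates at $E_2$ on the row we care about, and yields canonical isomorphisms $H^i(X, \mathcal{O}_X) \simeq H^i(Y, \mathcal{O}_Y)$ for every $i \geq 0$. Thus acyclicity of $\mathcal{O}_X$ is reduced to acyclicity of $\mathcal{O}_Y$ on the smooth projective model $Y$.

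Next I would use that rational connectedness is a birational invariant among smooth projective varieties, so $Y$ is a smooth projective rationally connected variety. On such a variety, a standard theorem (going back to Kollár--Miyaoka--Mori) asserts that there are no nonzero global pluriforms; in particular $H^0(Y, \Omega_Y^i) = 0$ for all $i > 0$. By Hodge symmetry on the smooth projective variety $Y$, we have $h^{0,i}(Y) = h^{i,0}(Y)$, so $H^i(Y, \mathcal{O}_Y) = 0$ for every $i > 0$. Combining with the isomorphism from the previous step gives $H^i(X, \mathcal{O}_X) = 0$ for all $i > 0$, which is the claim.

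The potential obstacle is purely bookkeeping: confirming that rational connectedness passes to the resolution and that klt singularities are indeed rational. Both are well known (the latter via Kawamata--Viehweg vanishing applied to a log resolution), so no serious difficulty arises; the proof is essentially a two-line application of rational singularities plus Hodge-theoretic vanishing on a rationally connected smooth model.
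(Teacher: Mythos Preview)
Your proof is correct and follows essentially the same approach as the paper: reduce to a smooth resolution $Y$ using that klt singularities are rational (so the Leray spectral sequence gives $H^i(X,\mathcal{O}_X)\simeq H^i(Y,\mathcal{O}_Y)$), note that $Y$ is again rationally connected, and invoke the vanishing for smooth projective rationally connected varieties. The only difference is cosmetic: the paper treats the smooth case as a black box, whereas you spell it out via Hodge symmetry and the vanishing of global holomorphic forms.
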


\begin{proof}
The statement is known for smooth projective rationally connected varieties.
Let $Y\rightarrow X$ be a resolution of $X$. Note that $Y$ is also rationally connected~\cite[Corollary 4.18]{Deb01}.
Since $X$ has klt singularities, by the Leray spectral sequence, we have that $H^i(X,\mathcal{O}_X)\simeq H^i(Y,\mathcal{O}_Y)=0$ for every $i\geq 1$. 
\end{proof}

\begin{lemma}\label{lem:RC-curve}
Let $n$ and $N$ be two positive integers.
There exists a constant $k:=k(n,N)$,
only depending on $n$ and $N$,
satisfying the following.
Let $X$ be a $n$-dimensional rationally connected variety and $B$ a boundary on $X$ for which $(X,B)$ is a log Calabi-Yau klt pair with $N(K_X+B)\sim 0$.
Le $E$ be a $\qq$-Cartier $\qq$-divisor which is not numerically trivial.
Then, there exists a curve $C\subset X$ satisfying the following conditions:
\begin{enumerate}
    \item $C$ is contained in the smooth locus of $X$, 
    \item $C$ is either disjoint from $B$ or it intersects $B$ transversally in at most $k$ points, and
    \item the divisor $E$ intersects $C$ non-trivially.
\end{enumerate}
\end{lemma}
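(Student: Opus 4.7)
My plan is to extract the desired curve as a rational curve in a general fiber of a Mori fiber space structure, using Birkar's boundedness of Fano varieties admitting bounded complements. Since $X$ is rationally connected and klt, it is uniruled, so $K_X$ is not pseudo-effective. After replacing $X$ by a small $\qq$-factorialization (preserving the klt log Calabi-Yau pair with index $N$), I would run a suitable MMP; for instance, since $K_X+B\equiv 0$, a $(K_X + B - \epsilon B_0)$-MMP for a prime component $B_0$ of $B$ and small $\epsilon > 0$ has non-pseudo-effective scaling and terminates with a birational map $\phi\colon X\dashrightarrow X'$ onto a Mori fiber space $\pi\colon X'\to T$. The map $\phi$ is an isomorphism in codimension one, and the transformed pair $(X',B':=\phi_* B)$ remains klt log Calabi-Yau with $N(K_{X'}+B')\sim 0$.

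Next, let $F$ be a very general fiber of $\pi$. By adjunction, $(F,B_F:=B'|_F)$ is a klt log Calabi-Yau pair with $N(K_F+B_F)\sim 0$, and $F$ is of Fano type of dimension at most $n-1$. By Birkar's boundedness of Fano varieties with bounded complement, the pair $(F, B_F)$ belongs to a bounded family depending only on $n$ and $N$. Standard Mori theory applied uniformly on this bounded family produces free rational curves $C_F\subset F$ lying in the smooth locus of $F$ with $(-K_F)\cdot C_F$ bounded by a constant $k=k(n,N)$; since $-K_F\equiv B_F$, this yields $B_F\cdot C_F\leq k$, and the transversality of the intersection with $B_F$ can be arranged by taking $C_F$ general in its deformation family. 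As $F$ varies over $T$, these curves sweep out a dense open subset of $X'$, giving movable curves on $X'$ whose strict transforms under $\phi^{-1}$ lie in the smooth locus of $X$ and satisfy conditions (1) and (2).

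For condition (3), I would exploit the freedom in choosing the MMP: by varying the prime component $B_0$ (or the auxiliary ample perturbation) used to initiate the MMP in the first step, one obtains different Mori fiber space structures on birational models of $X$, each contributing a class of bounded movable curves on $X$. The collection of classes so produced generates a subcone of $\overline{NM}_1(X)$ that, I claim, spans $N_1(X)$ — since $X$ is rationally connected the movable cone is full-dimensional, and the bounded-family input forces only finitely many numerical classes to arise at each bounded degree. Since $E$ is not numerically trivial, $E^\perp$ is a proper hyperplane in $N^1(X)$ and some bounded movable curve $C$ in our collection must satisfy $E\cdot C\neq 0$. The main obstacle is precisely this last spanning statement: making the collection of MFS-curves large enough to detect any non-numerically-trivial divisor requires a uniform application of Birkar's theorems across all MFS structures of birational models, and may require an argument of bend-and-break type applied to any witness curve of $E\cdot D\neq 0$ to reduce to a rational curve of bounded $(-K_X)$-degree, thereby meeting the bounded family of free curves already produced.
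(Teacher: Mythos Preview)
Your proposal has a real gap at precisely the step you flag: nothing in the argument forces the general-fiber curves of the various Mori fiber spaces you produce to span $N_1(X)$. The nef-cone theorem of Batyrev--Araujo--Lehmann only says that MFS-curves generate the $(K_X+\Delta)$-negative portion of $\overline{NM}_1(X)$ for a fixed $\Delta$; since $K_X+B\equiv 0$, the negativity you can create by perturbing to $K_X+B-\epsilon B_0$ is confined to directions dual to the span of the components of $B$, which need not be all of $N^1(X)$. If $B=0$ (not excluded by the hypotheses) your MMP does not even start. The bend-and-break fallback produces rational curves of bounded $(-K_X)$-degree but gives no handle on the pairing with an arbitrary $E$, nor on the transversality with $B$ required in (2). (A minor slip: the map $\phi$ need not be an isomorphism in codimension one, since $(K_X+B)$-trivial divisorial contractions can occur; this is not fatal to your outline but should be corrected.)

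The paper's argument is quite different: it avoids running an MMP and instead bounds $X$ itself rather than a fiber of an auxiliary fibration. By \cite[Theorem~1.4]{BDCS20} the pair $(X,B)$ is log bounded up to flops, hence after a flop it is a fiber $(\mathcal{X}_t,\mathcal{B}_t)$ of a fixed family $(\mathcal{X},\mathcal{B})\to T$. After arranging that restriction gives $N_1(\mathcal{X}'/T)\simeq N_1(\mathcal{X}'_t)$ for all $t$ (using acyclicity of $\mathcal{O}_{\mathcal{X}_t}$, Lemma~\ref{lem:acyclic}, in place of the Fano-type hypothesis of \cite{HX15}), one picks relatively ample classes $A_1,\dots,A_r$ near a fixed ample $H$, with $r=\rho(\mathcal{X}'_t)$, and uses that the map $A\mapsto A^{d-1}$ from $N^1$ to $N_1$ is a local diffeomorphism at ample classes. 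The complete-intersection curves $A_i^{d-1}|_{\mathcal{X}'_t}$ then span $N_1(\mathcal{X}'_t)$ for every $t$, which resolves the spanning problem cleanly and uniformly in the family. These curves are movable, can be chosen to avoid any prescribed codimension-two subset (hence the singular locus and the flopping locus), and have $\mathcal{B}_t$-degree bounded by a constant depending only on the family; one of them must pair nontrivially with the push-forward of $E$, and its strict transform on $X$ is the required curve.
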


\begin{proof}
By ~\cite[Theorem 1.4]
{BDCS20}, we know that 
$(X,B)$ is log bounded up to flops.
Let $(\mathcal{X},\mathcal{B}) \rightarrow T$ be a log bounding pair, up to flops, for the projective varieties as in the statement of the lemma.
By Noetherian induction, we may assume that $\mathcal{X}\rightarrow T$ is a smooth projective morphism.
Moreover, we may assume that every fiber $(\mathcal{X}_t,\mathcal{B}_t)$ is a klt log Calabi-Yau pair with
$N(K_{\mathcal{X}_t}+\mathcal{B}_t)\sim 0$ and that $\mathcal{X}_t$ is rationally connected.
Let $\mathcal{X}'\rightarrow T$ be a small $\qq$-factorialization of $\mathcal{X}\rightarrow T$.
Up to shrinking $T$, we may assume that $\mathcal{X}'_t\rightarrow \mathcal{X}_t$ is a small $\qq$-factorialization for every $t$.
Hence, we may assume that fibers are $\qq$-factorial.
By~\cite[Proposition 2.8.(1)]{HX15}, 
up to a finite base change,
we may assume that the homomorphism
induced by restriction
\begin{equation}\label{n1-isom}
N_1(\mathcal{X}'/T) \rightarrow
N_1(\mathcal{X}'_t)
\end{equation} 
is an isomorphism for every $t\in T$.
By duality, we obtain that 
\begin{equation}\label{n^1-isom}
N^1(\mathcal{X}'/T) \rightarrow
N^1(\mathcal{X}'_t)
\end{equation}
is an isomorphism for every $t\in T$.
We note that~\cite[Proposition 2.8.(1)]{HX15} requires the morphism to be of Fano type. However, the acyclicity of the structure sheaf of the fibers suffices for the proof.
This holds by Lemma~\ref{lem:acyclic}.

Let $d$ be the relative dimension of $\mathcal{X}'\rightarrow T$
and let $r$ be the relative Picard rank of $\mathcal{X}$ over $T$.
For each $t$, the map
$\phi_t \colon N^1(\mathcal{X}'_t)\rightarrow N_1(\mathcal{X}'_t)$ sending an ample divisor $A$ to $A^{d-1}$ is a 
local diffeomorphism at every ample class (see, e.g.,~\cite[Remark 2.3]{LX16}).
By isomorphisms~\eqref{n1-isom} and~\eqref{n^1-isom}, 
we have that 
\[
\Phi_t \colon 
N^1(\mathcal{X}'/T) \rightarrow 
N^1(\mathcal{X}'_t) \rightarrow 
N_1(\mathcal{X}'_t) \rightarrow
N_1(\mathcal{X}'/T)
\]
is a local diffeomorphism around every relatively ample class.

Fix $t_0\in T$ and let $H$ be a relatively ample class on $\mathcal{X}'$ over $T$.
Let $A_1,\dots, A_r$ be relatively ample classes on $N^1(\mathcal{X}'/T)$ close to $H$ so that $\Phi_{t_0}(A_1),\dots,
\Phi_{t_0}(A_r)$ are linearly independent.
Assume that for some $t\in T$, the curve classes \[
A_1^{d-1}|_{\mathcal{X}'_t},
\dots
A_r^{d-1}|_{\mathcal{X}'_t}
\]
are not linearly independent.
Let $D_t$ be a non-zero class in $N^1(\mathcal{X}'_t)$ so that 
\[
D_t\cdot A_1^{d-1}|_{\mathcal{X}'_t} =\dots =
D_t \cdot A_r^{d-1}|_{\mathcal{X}'_t} =0.
\]
Let $D$ be the unique non-zero lifting of $D_t$ to $N^1(\mathcal{X}'/T)$.
The intersection numbers
\[
0=D_t\cdot A_i^{d-1}|_{\mathcal{X}'_t} =
(D\cdot A^{d-1})|_{\mathcal{X}'_t}
\]
are independent of the fiber.
We conclude that $D|_{X_{t_0}}$ is numerically trivial. Then 
$D|_{X_t}$ must be numerically trivial as well.
This is a contradiction.
We conclude that $\Phi_t(A_1),\dots,\Phi_t(A_r)$ span $N_1(\mathcal{X}'_t)$ for every $t$.
Observe that for every closed subvariety $Z\subset \mathcal{X}'_t$ of codimension at least $2$, we can choose the irreducible curves $\Phi_t(A_1),\dots,\Phi_t(A_r)$ to be disjoint from $Z$.
Hence, the strict transforms of these curves on $\mathcal{X}_t$ generate $N_1(\mathcal{X}_t)$.
Furthermore, the intersection numbers 
\[
\mathcal{B}_t \cdot \Phi_t(A_1), 
\dots ,
\mathcal{B}_t \cdot \Phi_t(A_r)
\]
are bounded above by a constant only depending on the log bounding family.
Since the log bounding family only depends on $n$ and $N$,
we conclude that there exists a constant $k:=k(n,N)$ bounding the above intersection numbers.
By construction, we know that each $\Phi_t(A_i)$ intersects $\mathcal{B}_t$ transversally.
By abuse of notation, we denote by
$\Phi_t(A_i)$ the corresponding curves on $\mathcal{X}_t$.

Let $(X,B)$ be a log pair as in the statement of the proposition.
We know that there exists a composition of flops
$\phi\colon (X,B)\dashrightarrow (\mathcal{X}_t,\mathcal{B}_t)$ so 
that $(\mathcal{X}_t,\mathcal{B}_t)$ is isomorphic to a fiber of the log bounding family
$(\mathcal{X},\mathcal{B})\rightarrow T$.
By the first paragraph, we know that there exists curves 
$\Phi_t(A_1),\dots, \Phi_t(A_r)$ which are movable on $\mathcal{X}_t$ and generate $N_1(\mathcal{X}_t)$.
Furthermore, by replacing each $\Phi_t(A_i)$ with a numerically equivalent curve, we may assume that the following conditions are satisfied:
\begin{enumerate} 
    \item[(i)] Each $\Phi_t(A_i)$ lies in the smooth locus of $\mathcal{X}_t$, 
    \item[(ii)] each $\Phi_t(A_i)$ is disjoint from ${\rm Ex}(\phi^{-1})$, and
    \item[(iii)] $\mathcal{B}_t \cdot \Phi_t(A_i)\leq k$ for each $i$.
\end{enumerate}
Since $E$ is not numerically trivial in $X$,
then its push-forward $E_t$ on $\mathcal{X}_t$ is not numerically trivial.
Thus, $E_t\cdot \Phi_t(A_i)\neq 0$ for some $i\in \{1,\dots,r\}$.
We may assume that $i=1$.
Let $C$ be the strict transform of 
$\Phi_t(A_1)$ in $X$.
By (i) and (ii), we have that 
$C$ lies in the smooth locus of $X$.
This gives us (1).
On the other hand, we have that 
\[
E\cdot C = E_t \cdot \Phi_t(A_1) \neq 0.
\]
Furthermore, 
we have that
\[
B\cdot C = \mathcal{B}_t\cdot \Phi_t(A_1) \leq k,
\]
and the intersection of $B$ with $C$ is transversal. This gives us $(2)$ and $(3)$, concluding the proof.
\end{proof}

\subsection{Complexity}
In this subsection, we recall the definition of the complexity of a log pair. 
We recall the main theorem about the complexity 
which states that the complexity is always non-negative (see, e.g.,~\cite{Kol92,BMSZ18,RS21}).

\begin{definition}
{\em
Let $(X,B;x)$ be a log canonical pair.
Write $B=\sum_{i=1}^r b_iB_i$, where the $b_i$'s are non-negative real numbers and the 
$B_i$'s are effective Weil divisors.
The {\em complexity} of $(X,B)$ at $x$ is defined to be 
\[
c(X,\Delta;x):= 
\dim X + \rho(X_x) - \sum_{i=1}^r b_i.
\] 
Here, $X_x$ is the localization of $X$ at the closed point $x$.
}
\end{definition}

The following theorem is proved in~\cite[Theorem 1]{RS21}.

\begin{theorem}
\label{thm:comp}
Let $(X,B;x)$ be a log canonical singularity.
Then $c(X,B;x)\geq 0$.
Furthermore, if $c(X,B;x)=0$, then
$(X,\lfloor B\rfloor)$ is formally toric at the point $x$.
In particular, if $B$ is a reduced divisor, then it has at most $\dim X + \rho(X_x)$ prime components.
\end{theorem}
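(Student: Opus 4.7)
The plan is to proceed by simultaneous induction on $\dim X$ and on the Picard rank $\rho(X_x)$. The base cases $\dim X = 1$ or $\rho(X_x) = 0$ are handled directly: in the former, $X_x$ is a smooth curve germ and log canonicity forces $\sum b_i \leq 1 + \rho(X_x)$; in the latter, all the $B_i$ are numerically trivial over $x$, and the constraint comes from the log canonical condition alone. Before running the induction, one can freely replace $X$ by a small $\qq$-factorial modification: this does not change $\rho(X_x)$ up to a compensating drop in the number of contributing components of $B$, so the complexity is preserved, and the formally toric property descends through small modifications.

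Next, I would reduce to the case where every $B_i$ passes through $x$ and, by perturbing coefficients slightly, assume the $b_i$ are all positive rationals close to their original values. Choose a component $B_1$ and run a $(K_X + B - \delta B_1)$-MMP over a neighborhood of $x$ for a small $\delta > 0$. Since $K_X + B$ is lc and hence pseudo-effective in many chains, this MMP terminates with either a divisorial contraction $\pi \colon X \rightarrow X'$ or a Mori fiber space $X \rightarrow Z$. In the divisorial case, $\rho(X'_x)$ drops by one; one checks via the projection formula that the complexity of the pushforward pair $(X', \pi_* B)$ is no larger than $c(X, B; x)$, so by induction $c(X', \pi_* B) \geq 0$ and hence $c(X, B; x) \geq 0$. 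In the Mori fiber case, restrict to a general fiber $F$ passing through a point over $x$ and apply adjunction, writing $K_F + B_F = (K_X + B)|_F$; the restriction pair is lc of smaller dimension, and a careful count of how components of $B$ split into horizontal and vertical parts yields $c(F, B_F; p) \leq c(X, B; x)$, again closing the induction.

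For the characterization of equality, if $c(X, B; x) = 0$ then all the inequalities above must be equalities. In the divisorial step this forces the contracted extremal ray to be spanned by a component of $\lfloor B \rfloor$, and in the Mori fiber case it forces $X \rightarrow Z$ to be a $\mathbb{P}^1$-bundle whose fibers are transverse to $\lfloor B \rfloor$ at two sections, with no vertical components of $B$ outside the torus-invariant ones on the base. By inductive hypothesis, $(X', \lfloor \pi_* B \rfloor)$ and $(F, \lfloor B_F \rfloor)$ are formally toric at the relevant points. Gluing these formal toric structures back through the MMP (using that each step is a toric operation under the equality constraint) yields a formal toric chart for $(X, \lfloor B \rfloor)$ at $x$, and the bound on the number of prime components in the reduced case follows from the fact that a reduced toric boundary has exactly $\dim X + \rho(X_x)$ torus-invariant components.

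The hardest step is the MMP step in the Mori fiber case: one must show that the complexity of a general fiber is controlled by the complexity upstairs, and in the equality regime, that the vertical part of $B$ is entirely pulled back from a torus-invariant boundary on the base. This is where the classification of log Calabi-Yau fibrations of Fano type, together with the adjunction formula and a careful treatment of the generic fiber, is essential. Once this is in place, the dual complex of $(X, \lfloor B \rfloor)$ is combinatorially identified with a simplex decomposition of a sphere, and the formally toric conclusion follows from the standard dictionary between dual complexes of maximal dimension and toric models.
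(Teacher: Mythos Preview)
The paper does not supply a proof of this theorem. It is stated immediately after the sentence ``The following theorem is proved in~\cite[Theorem 1]{RS21}'' and is simply quoted from that reference, so there is no argument in the present paper against which to compare your proposal.

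That said, your sketch is broadly in the spirit of how complexity bounds are established in the literature (e.g.\ \cite{BMSZ18,RS21}): run an MMP and reduce either the Picard rank via a divisorial contraction or the dimension via a Mori fibre space, then track the complexity through adjunction. A few points in your write-up would need tightening before it could stand alone. First, passing to a small $\qq$-factorialization \emph{increases} the local Picard rank rather than preserving it, so you cannot claim the complexity is unchanged without an argument accounting for the new exceptional classes. Second, in the purely local setting of a germ $(X;x)$ one does not directly obtain a Mori fibre space over $x$; the actual arguments pass through a plt blow-up or Koll\'ar component and run the MMP on a projective model, which is more delicate than what you describe. Third, the equality case---showing that each MMP step is formally a toric operation and that these glue to a formal toric chart---is the substantive part of the theorem and is only gestured at here; in \cite{RS21} this step goes through a careful analysis of decompositions of the boundary and is not a routine ``dual complex is a sphere'' argument. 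If you intend to supply an independent proof rather than cite the result, these gaps would need to be filled.
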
 

\subsection{Dual complexes} 
In this subsection, we recall the definition of the dual complex of a log canonical pair.
We refer the reader to~\cite{KX16,dFKX17} for the classics about dual complexes of log Calabi-Yau pairs and singularities.
In~\cite{FS20,Bir20}, the authors prove connectedness results about the dual complexes of log Calabi-Yau pairs.

\begin{definition}
\label{def:dual-complex}
{\em 
Let $E$ be a projective scheme.
Assume that $E$ is pure-dimensional
and let $E_1,\dots,E_r$ be its irreducible components.
We assume that each $E_i$ is a normal variety.
Moreover, for every $J\subset \{1,\dots,r\}$,
if the intersection $\cap_{j\in J}E_j$ is non-empty, then every connected component of this set is irreducible and has codimension equal to $|J|-1$ as a subvariety of $E$.
The {\em dual complex}
$\mathcal{D}(E)$ of $E$ is defined as follows.
The vertices $v_1,\dots,v_r$ are in one-to-one correspondence with the prime components $E_1,\dots,E_r$.
Given an irreducible component $W$ of the intersection $\cap_{j\in J}E_j$, we associate a cell $v_W$ of dimension $|J|-1$.
Note that for each $i\in J$, the variety $W$
is contained in a unique irreducible component of $\cap_{j\in J\setminus \{i\}}E_i$.
This determines the gluing of the cell $v_W$.
The dual complex $\mathcal{D}(E)$ is a CW complex.

Let $(X,B)$ be a log canonical pair.
Let $(Y,B_Y)$ be a dlt modification of $(X,B)$ (see Lemma~\ref{lem:existence-dlt-mod}).
We define the dual complex of $(X,B)$ to be 
\[
\mathcal{D}(X,B):=
\mathcal{D}(\lfloor B_Y\rfloor).
\]
Here, 
the dual complex of $\lfloor B_Y\rfloor$ is defined as in the first paragraph.
By~\cite[Lemma 2.32]{FS20}, we know that $\mathcal{D}(X,B)$ is independent of the dlt modification up to simple homotopy equivalence.
}
\end{definition}

The following lemma will be used to control the dual complex of lc singularities of regularity one (cf.~\cite[Corollary 24]{dFKX17}).

\begin{lemma}
\label{lem:dual-comp-coll}
Let $(X,\Delta;x)$ be a klt singularity.
Let $Y\rightarrow X$ be a projective birational morphism which is an isomorphism in $X\setminus \{x\}$.
Let $E$ be the reduced exceptional divisor.
Assume that $(Y,E+\Delta_Y)$ has dlt singularities, where
$\Delta_Y$ is the strict transform of $\Delta$ on $Y$.
Then, the dual complex $\mathcal{D}(Y,E)$ is collapsible.
\end{lemma}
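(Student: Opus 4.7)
The plan is to run a minimal model program over $X$ to contract $E$ step-by-step, tracking how the dual complex changes along the way. First, since $(X,\Delta)$ is klt, the log discrepancies $a_i := a_{E_i}(X,\Delta)$ of all exceptional components $E_i$ of $E$ are strictly positive. Writing $\pi^*(K_X+\Delta) = K_Y + \Delta_Y + \sum_i (1-a_i)E_i$, we immediately obtain
\[
K_Y + E + \Delta_Y \sim_{\qq, X} \sum_i a_i E_i =: F,
\]
which is an effective $\pi$-exceptional $\qq$-divisor whose support equals ${\rm Ex}(\pi) = E$. Note also that $\Delta_Y$ has coefficients strictly less than one, so $\lfloor E+\Delta_Y\rfloor = E$ and therefore $\mathcal{D}(Y,E) = \mathcal{D}(Y, E+\Delta_Y)$; this lets us work with the dlt pair $(Y,E+\Delta_Y)$ throughout.

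Next, after replacing $Y$ by a small $\qq$-factorial modification if necessary (which does not change the dual complex up to simple homotopy equivalence, by Lemma~\ref{lem:existence-dlt-mod} and the invariance cited in Definition~\ref{def:dual-complex}), I would run a $(K_Y+E+\Delta_Y)$-MMP over $X$. Such an MMP exists for the dlt pair and terminates: because $K_Y+E+\Delta_Y$ is relatively $\qq$-linearly equivalent to the effective exceptional divisor $F$, a negativity-lemma argument forces the full exceptional support to be contracted after finitely many steps, producing the identity $X\to X$ whose dual complex is empty.

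The core of the proof is to verify that each step of this MMP induces an elementary collapse of the dual complex. Following the analysis of~\cite{dFKX17}, a divisorial contraction of an exceptional component of a dlt pair removes a vertex with a free face of $\mathcal{D}(Y,E+\Delta_Y)$, hence is an elementary collapse; a flip can be realized by collapsing and re-expanding proper subcomplexes of strictly smaller dimension, and in our relative setting over a klt base the resulting sequence of moves can be arranged as a sequence of elementary collapses thanks to the strict decrease of $F$ across each MMP step. Stringing these together shows that $\mathcal{D}(Y,E+\Delta_Y)=\mathcal{D}(Y,E)$ collapses to a point, which is the desired collapsibility.

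The hard part is precisely the flip step: a priori, a flip induces only a simple-homotopy equivalence between the dual complexes, and collapsibility is strictly stronger than being simple-homotopy equivalent to a point (Bing's house). Overcoming this requires the inductive argument in~\cite{dFKX17}, where one uses that the flipping locus corresponds to a lower-dimensional lc center whose own dual complex is already known, by induction on dimension, to be collapsible, and one threads this inductive collapsibility through the combinatorics of the $\mathcal{D}$-move associated with the flip.
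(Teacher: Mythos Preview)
The paper does not give its own proof of this lemma: it is stated with a parenthetical citation to \cite[Corollary 24]{dFKX17} and no argument is supplied. So there is nothing in the paper to compare against beyond that reference.

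Your outline is essentially the de Fernex--Koll\'ar--Xu strategy that the paper is invoking: run a $(K_Y+E+\Delta_Y)$-MMP over $X$, use the relation $K_Y+E+\Delta_Y\sim_{\qq,X} F$ with $F$ effective and exceptional to guarantee that the MMP contracts everything, and track the dual complex through each step. Your identification of the flip step as the delicate point, and your appeal to the inductive argument of \cite{dFKX17} to handle it, are both correct; the claim that ``the strict decrease of $F$'' alone arranges the flip moves into collapses is not quite right on its own, but you immediately walk this back in the final paragraph and defer to the correct inductive mechanism. In short, your proposal is an accurate sketch of the cited result, and since the paper simply cites that result, your write-up is already more detailed than what the paper provides.
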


\subsection{Regularity} 
In this subsection, we recall the definition of the regularity of a klt singularity.
We prove several lemmas about the regularity of klt singularities and their relation with the theory of complements.

\begin{definition}
{\em 
Let $(X,B)$ be a log canonical pair.
In~\cite[Lemma 4.2]{Mor20}, we showed that we can find a simplicial representative of $\mathcal{D}(X,B)$.
Furthermore, we can assume that all maximal dimensional simplices of
$\mathcal{D}(X,B)$ have 
the same dimension $r$.
The non-negative interger $r$ is 
defined to be the {\em regularity} of $\mathcal{D}(X,B)$.
We denote the regularity by $\mathcal{D}(X,B)$ (see~\cite[Definition 7.11]{Sho00}).
In the case that $\mathcal{D}(X,B)$ is empty, 
for instance, if $(X,B)$ is klt, then we set
${\rm reg}(X,B):=-\infty$.
From the definition, it follows that 
${\rm reg}(X,B)\in \{-\infty,0,\dots,\dim X-1\}$.
The upper bound on the regularity follows from Theorem~\ref{thm:comp}.

Let $(X,\Delta;x)$ be a klt singularity.
We define its regularity
as the maximum among the regularities
${\rm reg}(X,B;x)$ so that
$(X,B)$ is log canonical at $x$ and $B\geq \Delta$.
This means that 
\[
{\rm reg}(X,\Delta;x) := \max
\left\{ 
{\rm reg}(X,B;x) \mid 
(X,B;x) \text{ is lc and $B\geq \Delta$ }
\right\}. 
\]
In other words, the regularity of a pair 
is the maximum of the regularities 
among its $\qq$-complements.
Note that the in 
the case of singularities, 
we have that 
${\rm reg}(X,\Delta;x)\in \{0,\dots,\dim X-1\}$.
Indeed, we can always produce a log canonical center through the point $x$.
}
\end{definition}

The following lemma shows that a divisor computing the regularity must also compute a log canonical threshold at the closed point.

\begin{lemma}\label{lem:reg-comp-lcc}
Let $(X,\Delta;x)$ be a klt singularity.
Assume that $X$ is $\qq$-factorial.
Let $B\geq \Delta$ be an effective divisor so that 
\[
{\rm reg}(X,\Delta;x)=
{\rm reg}(X,B;x).
\]
Then, $x$ is a log canonical center of $(X,B;x)$.
\end{lemma}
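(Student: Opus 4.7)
The plan is to argue by contradiction. Suppose $x$ is not a log canonical center of $(X,B;x)$; I will construct an effective $\qq$-divisor $B' \geq B \geq \Delta$ with $(X,B';x)$ log canonical and $\reg(X,B';x) > \reg(X,B;x)$, contradicting the assumed maximality. First, take a $\qq$-factorial dlt modification $\pi\colon (Y, B_Y) \to (X, B)$ via Lemma~\ref{lem:existence-dlt-mod}. Set $r := \reg(X,B;x) \geq 0$ and let $\sigma$ be a top-dimensional simplex of the local dual complex $\mathcal{D}(X,B;x)$, corresponding to an irreducible component $W$ of some intersection $E_{i_0}^Y \cap \cdots \cap E_{i_r}^Y$ of log canonical places which meets the fibre $\pi^{-1}(x)$. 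If $\dim W = 0$, then $W$ is a single point in $\pi^{-1}(x)$, so $\pi(W) = \{x\}$ is itself a log canonical center of $(X,B)$, contradicting our assumption. Hence $\dim W \geq 1$, and I may choose a point $y \in W \cap \pi^{-1}(x)$.

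Next, I extract a divisorial valuation $E^{\ast}$ over $X$ whose center on $Y$ is $y$ (for instance, the exceptional divisor of the blow-up of $Y$ at $y$). Since $c_X(E^{\ast}) = \{x\}$ and $x$ is not a log canonical center of $(X,B;x)$, we have $a_{E^{\ast}}(X,B) > 0$. Using a sufficiently positive very ample linear system on $X$ with prescribed vanishing along $E^{\ast}$, together with a standard tie-breaking argument, I construct an effective $\qq$-divisor $D$ on $X$ and a coefficient $c > 0$ such that $(X, B + cD;x)$ is log canonical and $E^{\ast}$ is produced as a new log canonical place at the threshold, while $D$ avoids the positive-dimensional centers $c_X(E_{i_j})$ (these pass through $x$ but are not contained in a general such $D$), so that each $E_{i_j}$ remains a log canonical place of $(X, B + cD)$.

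Set $B' := B + cD$, and pass to a $\qq$-factorial dlt modification $\tilde{Y}$ of $(X, B')$ that extracts both $E^{\ast}$ and the strict transforms $\tilde{E}_{i_j}$ of the $E_{i_j}^{Y}$. Since $E^{\ast}$ is extracted by a blow-up centered at the point $y \in W = E_{i_0}^Y \cap \cdots \cap E_{i_r}^Y$, the strict transform of $W$ to $\tilde{Y}$ meets $E^{\ast}$ non-trivially (for instance, in the projectivized tangent cone of $W$ at $y$ when $y$ is a smooth point of $Y$). Therefore $\tilde{E}_{i_0} \cap \cdots \cap \tilde{E}_{i_r} \cap E^{\ast}$ is non-empty above $x$, and the corresponding simplex in $\mathcal{D}(X, B';x)$ has dimension $r+1$. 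This yields $\reg(X,B';x) \geq r + 1$, contradicting the maximality of $\reg(X,B;x)$ since $B' \geq \Delta$.

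The main technical obstacle is the tie-breaking step: one needs a divisor $D$ on $X$ whose log canonical threshold with respect to $(X,B)$ at $x$ is attained exactly at $E^{\ast}$ (and not at some competing valuation over a positive-dimensional subvariety through $x$), while keeping $(X, B + cD)$ log canonical. Once this divisor is produced, the geometric intersection claim at $y$ follows directly from the placement of $y$ inside the top intersection $W$, and the rest of the argument is bookkeeping on the dlt modification $\tilde{Y}$.
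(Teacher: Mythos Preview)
Your overall strategy---assume $x$ is not a log canonical center, enlarge $B$ to $B'$ so that a new log canonical place appears over $x$, and exhibit an $(r+1)$-simplex in $\mathcal{D}(X,B';x)$---is exactly the paper's. The genuine gap is the step you yourself flag as ``the main technical obstacle'': you want the \emph{pre-chosen} valuation $E^{\ast}$ (the exceptional divisor of the blow-up of $Y$ at the specific point $y\in W$) to compute the log canonical threshold of $D$ with respect to $(X,B)$. Standard tie-breaking does not do this; it produces a unique minimal log canonical center, but gives no control over \emph{which} divisorial valuation realizes the threshold. A priori the threshold of a divisor $D$ through $x$ could be computed by some other valuation over $x$ whose center on $Y$ is far from $y$, and then your geometric incidence argument at $y$ says nothing. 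There is a secondary gap as well: even granting that $E^{\ast}$ is a log canonical place of $(X,B')$, the non-emptiness of $E^{\ast}\cap\tilde E_{i_0}\cap\cdots\cap\tilde E_{i_r}$ is argued on $\mathrm{Bl}_yY$, not on a dlt modification $\tilde Y$ of $(X,B')$, and you do not explain why it persists there.

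The paper avoids both issues by \emph{not} choosing the new place in advance. It picks any $\Gamma\geq 0$ making $x$ a log canonical center of $(X,B+\Gamma)$, then extracts an \emph{arbitrary} log canonical place $E_0$ of $(X,B+\Gamma)$ with $c_X(E_0)=x$ on a model $Y'\to Y$ (via~\cite[Theorem 1]{Mor20}), and runs a $(-E_0)$-MMP over $X$ to a model $Y''$. After this MMP the entire fiber over $x$ equals $E_0''$. The key observation is that, since $x$ is assumed not to be a log canonical center of $(X,B)$, the image $Z=\pi(E_1\cap\cdots\cap E_{r+1})$ is strictly larger than $\{x\}$; the $(-E_0)$-MMP only modifies the fiber over $x$, hence is an isomorphism over the generic point of $Z$, so $E_1''\cap\cdots\cap E_{r+1}''$ still dominates $Z\ni x$ and therefore meets the fiber $E_0''$. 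This produces the $(r+1)$-simplex with no tie-breaking and no constraint on where $E_0$ sits relative to $W$.
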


\begin{proof}
Let $r={\rm reg}(X,\Delta;x)$.
Without loss of generality, we may assume that all the log canonical centers of $(X,B;x)$ pass through $x$.
Let $(Y,B_Y)$ be a dlt modification of $(X,B)$.
We denote by $p\colon Y\rightarrow X$ the corresponding projective birational morphism.
Then, we can find $r+1$ prime components $E_1,\dots,E_{r+1}$ of $\lfloor B_Y\rfloor$
with non-empty intersection.
Let $Z$ be the image of $E_1\cap\dots\cap E_{r+1}$ on $X$.
We can find an effective $\qq$-Cartier divisor $\Gamma$ through $x\in X$ so that $(X,B+\Gamma)$ is log canonical and $x$ is a log canonical center of $(X,B+\Gamma)$.
Write
\[
K_Y+B_Y+\Gamma_Y = p^*(K_X+B+\Gamma).
\] 
Then, there is a log canonical center of $(Y,B_Y+\Gamma_Y)$ which maps onto $x$.
By~\cite[Theorem 1]{Mor20}, we can find a projective birational morphism
$\phi\colon Y'\rightarrow Y$ which extracts a unique log canonical place of $(Y,B_Y+\Gamma_Y)$ whose center on $X$ is $x$.
Let $E_0$ be such prime divisor.
We run a $(-E_0)$-MMP over the base.
Let $Y'\dashrightarrow Y''$ be this minimal model program.
Let $(Y'',B_{Y''}+\Gamma_{Y''})$ be the log pull-back of $(X,B+\Gamma)$ to $Y''$.
We denote by $E''_0,\dots,E''_{r+1}$ the strict transforms of the components $E_0,E_1,\dots,E_{r+1}$, respectively.
Then, the fiber of $Y''\rightarrow X$ over $x$ equals $E''_0$, which is a log canonical center of $(X,B+\Gamma)$.
By construction, this minimal model program is an isomorphism over the strict transform of $Z$.
Hence, $E''_1\cap \dots \cap E''_{r+1}$ is non-trivial and must intersect the fiber over $x$.
Thus, we have that
\[
E''_0\cap \dots\cap E''_{r+1}\neq \emptyset.
\]
We conclude that ${\rm reg}(X,B+\Gamma;x) > {\rm reg}(X,B;x)$, leading to a contradiction.
\end{proof}

The following lemma asserts that given a klt singularity, we can find a $N$-complement which computes its regularity, where $N$ only depends on the dimension of the singularity (and possibly the coefficients set).

\begin{lemma}\label{lem:reg-bounded-comp}
Let $n$ be a positive integer.
Let $\Lambda\subset \qq$ be a set satisfying the descending chain condition with rational accumulation points.
There exists a constant $N:=N(n,\Lambda)$, only depending on $n$ and $\Lambda$, satisfying the following.
Let $(X,\Delta;x)$ be a $n$-dimensional klt singularity
so that ${\rm coeff}(\Delta)\subset \Lambda$.
Then, there exists a $N$-complement of $(X,\Delta;x)$ that computes
${\rm reg}(X,\Delta;x)$.
\end{lemma}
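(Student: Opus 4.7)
The plan is to realize the regularity on a suitable partial resolution and then upgrade a $\qq$-complement to a bounded one via Birkar's boundedness of complements.

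First, set $r:={\rm reg}(X,\Delta;x)$ and choose a $\qq$-complement $(X,B';x)$ of $(X,\Delta;x)$ with ${\rm reg}(X,B';x)=r$. After passing to a small $\qq$-factorialization if necessary, Lemma~\ref{lem:reg-comp-lcc} ensures that $x$ is itself a log canonical center of $(X,B';x)$. I would then take a $\qq$-factorial dlt modification $\pi\colon (Y,B_Y')\to (X,B')$ using Lemma~\ref{lem:existence-dlt-mod}, and select $r+1$ prime components $E_0,\ldots,E_r$ of $\lfloor B_Y'\rfloor$ whose common intersection is non-empty and maps to $x$.

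Next I would run a $(K_Y+B_Y'-\sum_{j=0}^{r}E_j)$-MMP over $X$ with appropriate scaling. Since every divisor contracted by $\pi$ is a log canonical place of $(X,B')$, this MMP contracts precisely the exceptional divisors of $\pi$ other than $E_0,\ldots,E_r$, producing a partial resolution $\pi'\colon Y'\to X$ whose exceptional locus is exactly $E_0\cup\cdots\cup E_r$. The resulting pair $\bigl(Y',\pi'^{-1}_*\Delta+\sum_{j=0}^{r}E_j\bigr)$ is lc near the fiber over $x$, has boundary coefficients in the DCC set $\Lambda\cup\{1\}$, and the simplex spanned by $E_0,\ldots,E_r$ persists in its dual complex.

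Now I would apply Birkar's theorem on boundedness of complements to the germ $\bigl(Y',\pi'^{-1}_*\Delta+\sum_j E_j\bigr)\to X$ at $x$. This supplies a constant $N=N(n,\Lambda)$ and a divisor $B_{Y'}''\geq \pi'^{-1}_*\Delta+\sum_j E_j$ such that $(Y',B_{Y'}'')$ is lc and $N(K_{Y'}+B_{Y'}'')\sim 0$ over a neighborhood of $x$. Setting $B:=\pi'_*B_{Y'}''$, we obtain $B\geq \Delta$ and $N(K_X+B)\sim 0$ around $x$. Since each $E_j$ appears with coefficient one in $B_{Y'}''$, it remains a log canonical place of $(X,B;x)$ after pushdown, which shows that $(X,B;x)$ is lc and realizes the simplex $[E_0,\ldots,E_r]$, so ${\rm reg}(X,B;x)\geq r$; the reverse bound is automatic from $B\geq \Delta$, yielding the desired equality.

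The main obstacle I anticipate is ensuring that the prescribed log canonical places $E_0,\ldots,E_r$ survive as log canonical places in the $N$-complement produced by Birkar's theorem. This requires the lc version of boundedness of complements applied to a pair whose reduced part already contains those divisors, so that any dominating $N$-complement inherits them as log canonical places. A secondary subtlety is the relative MMP step: one has to verify that the simplex divisors are not contracted, which follows because their common intersection dominates $x$, forcing each $E_j$ to have positive-dimensional fiber over $x$ and hence to be preserved by the MMP run over $X$.
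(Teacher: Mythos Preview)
Your overall strategy is sound and reaches the goal, but it differs from the paper's argument. The paper never runs an MMP and never passes to a smaller model $Y'$; instead it stays on the full dlt modification $Y$, sets $\Delta_{Y,d}:=p^{-1}_*\Delta+(1-\tfrac{1}{d})\,{\rm Ex}(p)$, and applies bounded complements to this \emph{klt} pair. The resulting $N$-complement $(Y,\Gamma_{Y,d})$ has coefficients in $\tfrac{1}{N}\zz$, so once $d>N$ every exceptional divisor is forced to appear with coefficient exactly $1$; pushing forward gives the desired $N$-complement on $X$ with the $E_i$ as log canonical places. This divisibility trick replaces your MMP step and only requires the klt version of Birkar's theorem, whereas your route needs the lc version applied to a pair that already carries the $E_j$ with coefficient $1$. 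Both work; the paper's is shorter and uses a slightly weaker input.

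One genuine wrinkle in your write-up is the MMP step. Running the $(K_Y+B_Y'-\sum_j E_j)$-MMP over $X$ is an MMP for the anti-effective divisor $-\sum_j E_j$, and your stated reason that the $E_j$ survive (``their common intersection dominates $x$, forcing each $E_j$ to have positive-dimensional fiber'') does not prevent a divisorial contraction of some $E_k$: an extremal curve $C\subset E_k$ with $E_k\cdot C<0$ but $\sum_{j\neq k}E_j\cdot C$ large is perfectly compatible with $(\sum_j E_j)\cdot C>0$. The clean fix is either to invoke an extraction theorem (e.g.\ the result you cite elsewhere that produces a model extracting a prescribed set of lc places) to obtain $Y'$ directly, or---simpler still---to drop the MMP entirely and apply bounded complements to the lc pair $(Y,\pi^{-1}_*\Delta+{\rm Ex}(\pi)_{\rm red})$ on the original dlt modification, which already has every exceptional divisor at coefficient $1$.
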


\begin{proof}
Let $r={\rm reg}(X,\Delta;x)$.
Let $B'\geq \Delta$ be a boundary that computes the regularity of $(X,\Delta;x)$.
Let $(Y,B'_Y)$ be a dlt modification of $(X,B';x)$.
We denote by $p\colon Y\rightarrow X$ the corresponding projective birational morphism.
By construction, we can find $r+1$ prime components $E_1,\dots,E_{r+1}$ of $\lfloor B'_Y\rfloor$ that intersect non-trivially.
We define 
\[
\Delta_{Y,d}:= p^{-1}_*\Delta+ \left(1-\frac{1}{d}\right){\rm Ex}(p),
\] 
for every $d\in \zz_{\geq 1}$.
Then, the pair $(Y,\Delta_{Y,d})$ is a klt pair.
By~\cite[Theorem 2.19]{Mor20a}, we can find a $N$-complement for 
$(Y,\Delta_{Y,d})$ over $x$, where $N$ only depends on $n$ and $\Lambda$.
We let $(Y,\Gamma_{Y,d})$ be the $N$-complement.
Then, we have that $(Y,\Gamma_{Y,d})$ is log canonical and 
\[
N(K_Y+\Gamma_{Y,d})\sim_X 0.
\]
Let $(X,\Gamma_d)$ be the log pair obtained by pushing-forward $(Y,\Gamma_{Y,d})$ to $X$.
Observe that $(X,\Gamma_d)$ is log canonical and 
$N(K_X+\Gamma_d)\sim 0$ around the point $x\in X$.
If $d>N$, then for each $i\in \{1,\dots,r+1\}$, we have that 
\[
1-{\rm coeff}_{E_i}(\Gamma_{Y,d})=a_{E_i}(X,\Gamma_d)=0.
\] 
We conclude that $(X,\Gamma_d;x)$ is a $N$-complement of $(X,\Delta;x)$ of regularity $r$ provided that $d>N$.
\end{proof}

\subsection{Regularity one} In this subsection, we prove some lemmas regarding the geometry of klt singularities and log Calabi-Yau pairs of regularity one.
The following Lemma characterizes the combinatorial structure of the possible log canonical centers of an lc singularity of regularity one.

\begin{lemma}\label{lem:class-dual-comp}
Let $(X,B;x)$ be a log canonical pair of regularity one.
Assume that $(X,\Delta;x)$ is klt for some $\Delta \leq B$.
Let $(Y,B_Y)$ be the dlt modification of $(X,B;x)$ (see Lemma~\ref{lem:existence-dlt-mod}).
Assume that $(Y,B_Y)$ has at least one divisorial log canonical center mapping onto $\{x\}$
and that every log canonical center of $(X,B;x)$ passes through $x$.
Write $E_1,\dots,E_r$ for the prime components of $\lfloor B_Y\rfloor$.
Then, one of the following holds:
\begin{enumerate}
    \item [(i)] $\mathcal{D}(Y,B_Y)$ is a closed interval: $E_1$ maps onto a log canonical center $Z_1\supsetneq x$, $E_r$ maps onto a log canonical center
    $Z_r\supsetneq x$, 
    $Z_1\neq Z_r$, and each $E_i$ with $i\in \{2,\dots,r-1\}$ maps onto $x$,
    \item[(ii)] $\mathcal{D}(Y,B_Y)$ is a circle: $E_1$ maps onto a log canonical center $Z\supsetneq x$ and every other $E_i$ maps onto $x$, 
    \item [(iii)] $\mathcal{D}(Y,B_Y)$ is a closed interval:
    $E_1$ maps onto a log canonical center
    $Z_1\supsetneq x$ and each $E_i$ 
    with $i\geq 2$ maps onto $x$, or 
    \item [(iv)] $\mathcal{D}(Y,B_Y)$ is a closed interval: each $E_i$ with $i\in \{1,\dots,r\}$ maps onto $x$.
\end{enumerate}
\end{lemma}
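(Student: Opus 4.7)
First, I would observe that since $\reg(X,B;x)=1$, a simplicial representative of $\mathcal{D}(Y,B_Y)$ has all its maximal cells of dimension one, so $\mathcal{D}(Y,B_Y)$ is a finite graph with no isolated vertices. Its connectedness follows from the hypothesis that every log canonical center of $(X,B;x)$ passes through $x$, combined with the connectedness theorems for dual complexes of lc pairs in~\cite{FS20,Bir20} applied in a neighborhood of $x$. Next I would bound the local degree at each vertex: for each prime component $E_i$ of $\lfloor B_Y\rfloor$, dlt adjunction yields a dlt pair $(E_i,B_{E_i})$ with $K_{E_i}+B_{E_i}\sim_{\qq}(K_Y+B_Y)|_{E_i}$. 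Since $(X,B)$ is lc at $x$ and $K_X+B$ is $\qq$-Cartier, $K_X+B\sim_{\qq}0$ in a neighborhood of $x$, so $(E_i,B_{E_i})$ is a dlt log Calabi-Yau pair in a neighborhood of $E_i\cap p^{-1}(x)$. Because $\reg=1$ forbids triple intersections in $\lfloor B_Y\rfloor$, the prime components of $\lfloor B_{E_i}\rfloor$ are pairwise disjoint, so the link of $v_i$ in $\mathcal{D}(Y,B_Y)$ coincides with the set of connected components of $\lfloor B_{E_i}\rfloor$. By~\cite[Theorem 1.2]{Bir21}, there are at most two such components, so $\deg(v_i)\leq 2$, and a connected graph of maximum degree at most two with no isolated vertices is either a closed interval or a circle.

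To classify the configurations, call $E_i$ \emph{small} if $p(E_i)=\{x\}$ and \emph{big} otherwise. By hypothesis at least one $E_i$ is small. If every $E_i$ is small, then $p\colon Y\to X$ is an isomorphism over $X\setminus\{x\}$, $\sum E_i$ is the reduced exceptional divisor, and Lemma~\ref{lem:dual-comp-coll} applied to the klt pair $(X,0)\leq(X,\Delta)$ shows that $\mathcal{D}(Y,B_Y)$ is collapsible; since a circle is not collapsible, the graph is a closed interval, yielding case~(iv). If there are big vertices, the key point is that each big $E_i$ must have degree one, so it must be an endpoint of the path (in the interval case) or the unique big vertex (in the circle case). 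Otherwise, a big $E_i$ of degree two gives, via Theorem~\ref{introthm:uniqueness} applied to the dlt log Calabi-Yau pair $(E_i,B_{E_i})$ with disconnected $\lfloor B_{E_i}\rfloor$, a $\pp^1$-link birational model whose two boundary sections dominate the base; but these sections sit in the fiber $p|_{E_i}^{-1}(x)$, forcing $Z_i=p(E_i)$ to be a point and contradicting bigness. This confines big vertices to degree-one positions, giving cases~(i)--(iii); within case~(i), the equality $Z_1=Z_r$ would produce a $\pp^1$-link between $E_1$ and $E_r$ closing the path into a circle, so $Z_1\neq Z_r$.

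The main obstacle is the relative $\pp^1$-link analysis at big degree-two vertices: since $(E_i,B_{E_i})$ is log Calabi-Yau only in a neighborhood of $E_i\cap p^{-1}(x)$ rather than globally on $E_i$, one must carefully adapt Theorem~\ref{introthm:uniqueness} to this relative setting and verify that the two sections of the $\pp^1$-link are forced into $p|_{E_i}^{-1}(x)$. Once this is established, the remaining exclusions (circles with at least two big vertices, paths with interior big vertices, and the case $Z_1=Z_r$ in~(i)) all follow from the same relative $\pp^1$-link argument.
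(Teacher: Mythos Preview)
Your reduction to a connected graph of maximum degree two, and the collapsibility argument for case~(iv), are fine and match the paper. The problem is your treatment of big vertices. You argue that a big $E_i$ of degree two yields, via Theorem~\ref{introthm:uniqueness}, a $\pp^1$-link whose two sections dominate the base yet lie in $p|_{E_i}^{-1}(x)$, forcing $Z_i=\{x\}$. But this argument, if it worked, would \emph{exclude case~(ii)}: there $E_1$ is big, sits in a circle (hence has degree two), and its two neighbours map to $x$, so both components of $\lfloor B_{E_1}\rfloor$ lie over $x$. Your contradiction would fire and rule out a case that genuinely occurs. The underlying reason is exactly the obstacle you flag but do not resolve: $(E_i,B_{E_i})$ is only log Calabi--Yau over $Z_i$, not absolutely, and $\lfloor B_{E_i}\rfloor$ is vertical over $Z_i$; in that situation there is no $\pp^1$-link with horizontal sections, and Theorem~\ref{introthm:uniqueness} (even a relative version) gives no contradiction.

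The paper avoids this entirely by working from the other end. It fixes a \emph{small} component $E_k$ and runs a $(-E_k)$-MMP over $X$, producing a model $Y^*$ on which the whole fibre over $x$ is the single divisor $E_k^*$. Adjunction to $E_k^*$ then gives a genuinely projective log Calabi--Yau pair of regularity zero, so the two-component bound from~\cite{Bir21} applies cleanly and tells you that at most two of the $E_i$ survive as divisors on $Y^*$ with image strictly containing $x$. The four cases then drop out from whether zero, one (with connected or disconnected intersection with $E_k^*$), or two such divisors survive. In short, the paper analyses the geometry on a small divisor after an MMP contraction, whereas you try to analyse it directly on a big divisor where the needed global structure is simply not available.
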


\begin{proof}
Let $\phi\colon Y\rightarrow X$ be the projective birational morphism giving the dlt modification.
Since $(X,B;x)$ has regularity one, then each divisorial log canonical center of $(Y,B_Y)$ intersects at most two other divisorial log canonical centers.
Hence, $\mathcal{D}(Y,B_Y)$ is either a circle or a closed interval.

Let $E_k$ be a component of $\lfloor B_Y\rfloor$ which maps onto $x$.
We run a $(-E_k)$-MMP over $X$ which terminates with a good minimal model for $-E_k$.
Note that this minimal model program is an isomorphism over 
$X\setminus \{x\}$.
Furthermore, since every curve contracted or flipped by this MMP is $E_k$-positive, then $E_k$ is not contracted. 
Let $Y^*\rightarrow X$ be the model where this minimal model program terminates.
We denote by $E_k^*$ the strict transform of $E_k$ on $Y^*$.
We let $\phi^*\colon Y^*\rightarrow X$ be the induced projective birational morphism.
We show that ${\phi^*}^{-1}(x)=E_k^*$.
Indeed, assume that there is an irreducible component of ${\phi^*}^{-1}(x)$ which is not $E_k^*$.
Since ${\phi^*}^{-1}(x)$ is connected, we may assume that such component intersects $E_k^*$ non-trivially.
Thus, we can find a curve $C$ in such component, which intersects $E_k^*$, and is not contained in $E_k^*$.
Then, we have that $-E_k^*\cdot C<0$, contradicting the fact that
$-E_k^*$ is semiample
over the base $X$.

We denote by
$(Y^*,B_{Y^*})$ the log pull-back of $(X,B)$ to $Y^*$.
For each $E_i$ in $Y$, we denote by
$E_i^*$ its push-forward on $Y^*$.
If $E_i$ is contracted, then we let $E_i^*$ be the trivial divisor.
We claim that $\lfloor B_{Y^*}-E^*_k \rfloor$ has at most two prime components.
Note that $(Y^*,B_{Y^*})$ has regularity one. Then, by Lemma~\ref{lem:from-reg-1-to-0}, the log pair
\[
K_{E^*_k} + B_{E_k^*}\sim_\qq 
K_{Y^*}+B_{Y^*}|_{E_k^*}
\]
has regularity zero.
In particular, the divisor
$\lfloor B_{E_k^*}\rfloor$
either has two disjoint prime components or a unique prime component.
Let $E_i^*$ be a prime divisor on $Y^*$ so that its image on $X$ properly contains the point $x$.
Then, $E_i^*$ intersects $E_k^*$ non-trivially.
Furthermore, we have that
\begin{equation}
\label{eq:int-E_k} 
\supp\left(E_i^*\cap E_k^*\right)
\subseteq \lfloor B_{E_k^*}\rfloor.
\end{equation} 
On the other hand, if $E_j^*$ and $E_i^*$ are two different prime divisors, then the intersection
$E_i^*\cap E_j^*\cap E_k^*$ has codimension at least three.
Otherwise, we would obtain a log canonical surface singularity with negative complexity, leading to a contradiction (see Theorem~\ref{thm:comp}).
From the containment~\eqref{eq:int-E_k}, we conclude that there are at most two components $E_i^*$ and $E_j^*$ which are not contracted in the minimal model program $Y\dashrightarrow Y^*$.

If the exceptional locus of
$\phi^*$ only consists of the divisor $E^*_k$, then all the divisors $E_i$ are contracted by the minimal model program
$Y\dashrightarrow Y^*$.
This means that each $E_i$ maps onto $x$.
By Lemma~\ref{lem:dual-comp-coll}, the dual complex $\mathcal{D}(Y,B_Y)$ is collapsible.
Then, it must be a closed interval.
Thus, we are in the situation of 
(iv).
If there is exactly one prime divisor $E_i^*$ in the exceptional locus of $\phi^*$ which is different from $E_k^*$,
then there are two possibilities:
\begin{enumerate}
    \item [(a)] The intersection $E_i^* \cap E_k^*$ is connected.
In this case, the intersection must be irreducible.
Furthermore, $E_i$ must intersect a unique prime component of $\lfloor B_Y -E_i \rfloor$. 
Thus, $E_i$ corresponds to an  end-point of the closed interval $\mathcal{D}(Y,B_Y)$.
Hence, we are in the situation of (iii).
    \item [(b)] The intersection $E_i^* \cap E_k^*$ is disconnected.
    In this case, the intersection must be two irreducible divisors in $E_k^*$.
    Hence, we are in the situation of (ii).
\end{enumerate}
Finally, 
assume there are exactly two prime divisors $E_i^*$ and $E_j^*$ in the exceptional locus of $\phi^*$ which are different from $E_k^*$.
Then, each intersection
$E_k^*\cap E_i^*$ and
$E_k^*\cap E_j^*$ is irreducible,
these intersections are disjoint, and 
$E_i\cap E_j=\emptyset$.
Furthermore, the divisor
$E_i$ (resp. $E_j$) must intersect a unique prime component of 
$\lfloor B_Y-E_i\rfloor$
(resp. $\lfloor B_Y-E_j\rfloor$).
We conclude that the divisors $E_i$ and $E_j$ correspond to the end-points of the closed interval $\mathcal{D}(Y,B_Y)$.
Thus, we are in the situation of (i).
\end{proof}

The following lemma will allow us to control the log discrepancy at certain potentially log canonical places of a klt singularity
of regularity one.

\begin{lemma}\label{lem:not-point}
Let $n$ and $N$ be two positive integers.
Let $\Lambda$ be a set satisfying the descending chain condition.
There exists a set $\mathcal{M}(n,N,\Lambda)$
satisfying the ascending chain condition, 
which only depends on $n,N$ and $\Lambda$, satisfying the following.
Let $(X,B;x)$ be a log canonical pair of regularity one.
Assume that $\{x\}$ is a log canonical center of $(X,B)$.
Assume that $N(K_X+B)\sim 0$.
Let $E$ be a log canonical place of $(X,B;x)$ which maps to $Z\supsetneq x$.
Let $\Delta$ be an effective divisor with $\Delta\leq B$ so that $(X,\Delta;x)$ is klt and ${\rm coeff}(\Delta)\subset \Lambda$.
Then, we have that 
\[
a_E(X,B)\in \mathcal{M}(n,N,\Lambda).
\] 
Furthermore, if the set $\Lambda$ is finite, then the set $\mathcal{M}(n,N,\Lambda)$ only accumulates to zero.
\end{lemma}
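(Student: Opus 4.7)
The plan is to reduce to a bounded lower-dimensional problem via the endpoint classification of Lemma~\ref{lem:class-dual-comp} together with adjunction, and then to read off the desired log discrepancy from finitely many intersection numbers on that bounded model.

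First I would take a dlt modification $p\colon(Y,B_Y)\rightarrow (X,B;x)$ as in Lemma~\ref{lem:existence-dlt-mod}, arranged so that $E$ appears as a prime component of $\lfloor B_Y\rfloor$. Since $\{x\}$ is a log canonical center of $(X,B)$ and $E$ has center $Z\supsetneq \{x\}$, Lemma~\ref{lem:class-dual-comp} places us in case (i), (ii), or (iii); in each of these, the divisorial log canonical places of $(Y,B_Y)$ whose centers properly contain $x$ are precisely the endpoint vertices of the closed interval or the distinguished vertex of the circle. Hence I may take $E$ to equal such an endpoint component of $\lfloor B_Y\rfloor$. Using Lemma~\ref{lem:reg-bounded-comp} I can also arrange that $B$ is an $N$-complement realizing the regularity, for some $N$ depending only on $n$ and $\Lambda$.

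Performing adjunction to $E$ yields a dlt log Calabi-Yau pair $(E,B_E)$ of dimension $n-1$ with $N(K_E+B_E)\sim 0$ and coefficients in the hyperstandard set determined by $N$ and $\Lambda$; moreover, $\mathcal{D}(E,B_E)$ has regularity zero since $E$ was an endpoint vertex in $\mathcal{D}(Y,B_Y)$. By \cite{Bir19} together with \cite{BDCS20}, the pair $(E,B_E)$ then lies in a log bounded family, up to flops. Within that bounded family, the coefficient of $E$ in the log pull-back $\Delta_Y$ of $(X,\Delta)$ to $Y$ is determined by finitely many intersection numbers on the bounded model together with coefficients drawn from $\Lambda$, so the log discrepancy $a_E(X,B)$ is expressible as a bounded $\qq$-linear combination of such data and therefore lies in a set $\mathcal{M}(n,N,\Lambda)$ satisfying the ACC. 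When $\Lambda$ is finite the hyperstandard set is finite, and the only possible accumulation in $\mathcal{M}$ is at zero, corresponding to $\coeff_E(\Delta_Y)\to 1$.

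The main obstacle will be the bookkeeping of how coefficients transform under the $(-E)$-MMP used in the style of Lemma~\ref{lem:class-dual-comp} and under the adjunction formula for the different, without losing control of either $N$ or of the DCC of the resulting hyperstandard sets on $E$. This rests on the now-standard theory of hyperstandard coefficients and $N$-complements recalled in the preliminaries, and on the fact that the MMP extracting $E$ as the unique exceptional divisor is crepant over the relevant piece of $Y$.
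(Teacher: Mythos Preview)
Your outline follows the same broad strategy as the paper (adjunction to $E$, boundedness, then read off $a_E(X,\Delta)$ from intersection numbers), but there is a genuine gap at the boundedness step.

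You write that ``by \cite{Bir19} together with \cite{BDCS20}, the pair $(E,B_E)$ then lies in a log bounded family, up to flops.'' The result from~\cite{BDCS20} you need applies to \emph{projective rationally connected klt} log Calabi--Yau pairs. Your $(E,B_E)$ satisfies none of these three hypotheses as stated: $E$ maps onto $Z$, which has positive dimension in the quasi-projective germ $X$, so $E$ is not projective; and since you only know $(E,B_E)$ is dlt of regularity zero, $\lfloor B_E\rfloor$ is nonempty and the pair is not klt. The paper closes both gaps at once by first proving (via connectedness and the regularity-one hypothesis) that $E$ is the \emph{unique} log canonical place of $(X,B)$ whose center contains $Z$, and then localizing at a general point of $Z$. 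After localization the general fiber of $E\rightarrow Z$ is projective, rationally connected (by~\cite{HM07}), and the induced pair on it is klt; only then is~\cite[Theorem~1.4]{BDCS20} invoked. Your appeal to Lemma~\ref{lem:class-dual-comp} does not replace this step: that lemma classifies the dual complex globally over $x$, but does not give you projectivity or klt-ness of the adjoint pair on $E$.

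A second, smaller gap is that you never specify \emph{which} intersection numbers determine $\alpha={\rm coeff}_E(\Delta_Y)$ or why they are bounded. The paper uses Lemma~\ref{lem:RC-curve} to produce a curve $C$ in the smooth locus of the general fiber of $E$ with $B_E\cdot C\leq k(n,N)$; since $C$ is contracted by $Y\rightarrow X$, intersecting the relation $(1-\alpha)E+(B'_Y-\Delta'_Y)\sim_{\qq,X}0$ with $C$ expresses $1-\alpha$ as a ratio of intersection numbers whose numerator has coefficients from the finite set $\zz[\tfrac{1}{N}]\cap[0,1]$ and from $\Lambda$, bounded in number by $k$. Your phrase ``finitely many intersection numbers on the bounded model'' needs exactly this curve and this relation to become a proof. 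Finally, note that invoking Lemma~\ref{lem:reg-bounded-comp} is unnecessary: $N(K_X+B)\sim 0$ is already part of the hypothesis.
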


\begin{proof}
Let $(Y,B_Y)$ be a $\qq$-factorial dlt modification of $(X,B;x)$ (see Lemma~\ref{lem:existence-dlt-mod}).
Let $\phi\colon Y\rightarrow X$ be the
corresponding projective birational morphism.
Since $x$ is a log canonical center, we may assume there exists a prime 
component of $\lfloor B_Y\rfloor$ which maps onto $x$.
We call such component $E_0$.
We claim that $E$ is the unique log canonical place of $(X,B;x)$ whose image on $X$ contains $Z$.
Assume it is not.
Let $E'$ be a log canonical place of $(X,B;x)$, different from $E$, which maps to $Z$.
By the connectedness theorem~\cite[Theorem 1.2]{Bir21}, we can assume that $E'$ intersects $E$ over $Z$.
In particular, we may assume that
$E\cap E'$ dominates $Z$.
Since $Z$ contains $x$, then
$E\cap E'$ intersects the fiber over $x$.
We can run a $(-E_0)$-MMP over the base.
This minimal model program is an isomorphism at the generic point of $E\cap E'$.
After finitely many steps, we obtain a model in which
$E\cap E'\cap E_0$ is non-empty.
This implies that the regularity of $(X,B;x)$ is at least two, leading to a contradiction. 
Hence, we may assume that $E$ is the only log canonical place which maps onto $Z$.

From now on, we can localize at a general point of $Z$, so we may assume that $E$ is the only log canonical center.
Let $\phi_E\colon E\rightarrow Z$ be the induced fibration.
By~\cite[Theorem 1.2]{HM07}, we conclude that the general fiber of $E\rightarrow Z$ is rationally chain connected.
By the previous paragraph, the general fiber of $E\rightarrow Z$ is klt.
Hence, it is rationally connected by~\cite[Corollary 1.8]{HM07}.
Let $(E,B_E)$ be the log pair induced by adjunction of $(Y,B_Y)$ to $E$.
We have that $(E,B_E)$ is klt, $E$ is rationally connected, and 
\[
N(K_E+B_E)\sim 0.
\]
By~\cite[Theorem 1.4]{BDCS20}, we conclude that $(E,B_E)$ is log bounded up to flops.
By Lemma~\ref{lem:RC-curve}, we can find a curve $C\subset E$ which satisfies the following conditions:
\begin{enumerate}
    \item $C$ lies in the smooth locus of $E$, and 
    \item $C$ is either disjoint from $B_E$ or it intersects $B_E$ transversally at most in $k$ points.
\end{enumerate}
Here, $k$ is a constant, only depending on $n$ and $N$.
Write $B_Y=E+B'_Y$.
Note that the coefficients of $B'_Y$ belongs to the finite set 
$\zz\left[\frac{1}{N}\right]\cap [0,1]$.
Let $(Y,\Delta_Y)$ be the log pull-back of $(X,\Delta)$ to $Y$.
Write $\Delta_Y = \alpha E +\Delta_Y'$.
Note that the coefficients of
 $\Delta'_Y$ belongs to the set $\Lambda$ which satisfies the descending chain condition.
We conclude that 
\[
\alpha = \frac{ C\cdot (B'_Y -\Delta'_Y)}{m}
\] 
belongs to a set satisfying the ascending chain condition.
Furthermore, if $\Lambda$ is a finite set, then 
$B'_Y-\Delta'_Y$ has coefficients in a finite set.
Hence, the possible values of
$\alpha$ can only accumulate to zero.
\end{proof}

\begin{lemma}\label{lem:from-reg-1-to-0}
Let $(X,B)$ be an lc pair of regularity one.
Let $E$ be a prime component of $\lfloor B\rfloor$.
Let $(E,B_E)$ be the pair obtained by adjunction of $(X,B)$ to $E$.
Then, $(E,B_E)$ has regularity zero.
In particular, $\lfloor B_E\rfloor$ has either one or two disjoint components. 
\end{lemma}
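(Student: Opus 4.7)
The plan is to pass to a dlt modification and exploit the compatibility between adjunction on divisorial lc centers and the link operation on dual complexes. Specifically, I would take a $\qq$-factorial dlt modification $\pi\colon (Y,B_Y)\to (X,B)$ via Lemma~\ref{lem:existence-dlt-mod}. Since $E$ is already a prime component of $\lfloor B\rfloor$ with coefficient one, its strict transform $E_Y$ is a prime component of $\lfloor B_Y\rfloor$. By the standard dlt-adjunction formula, the pair $(E_Y,B_{E_Y})$ defined by $K_{E_Y}+B_{E_Y}=(K_Y+B_Y)|_{E_Y}$ is itself dlt and provides a dlt modification of $(E,B_E)$, so $\mathcal{D}(E,B_E)$ agrees with $\mathcal{D}(E_Y,B_{E_Y})$ up to simple homotopy equivalence.

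Next, I would observe that $\mathcal{D}(E_Y,B_{E_Y})$ is canonically identified with the link of the vertex $v_{E_Y}$ in $\mathcal{D}(Y,B_Y)=\mathcal{D}(X,B)$. Indeed, prime components of $\lfloor B_{E_Y}\rfloor$ are precisely the irreducible components of the intersections $E_Y\cap E_j$ with the other prime components $E_j$ of $\lfloor B_Y\rfloor$, which correspond bijectively to edges of $\mathcal{D}(X,B)$ incident to $v_{E_Y}$; higher strata of $\mathcal{D}(E_Y,B_{E_Y})$ correspond in the same way to higher simplices of $\mathcal{D}(X,B)$ containing $v_{E_Y}$. Because $(X,B)$ has regularity one, every maximal simplex of $\mathcal{D}(X,B)$ is an edge, and the link of a vertex in such a CW complex is zero-dimensional. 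This immediately gives that $\mathcal{D}(E,B_E)$ is zero-dimensional, which is precisely the statement that $(E,B_E)$ has regularity zero.

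For the final assertion that $\lfloor B_E\rfloor$ has either one or two disjoint prime components, I would argue as follows. Regularity zero says that $\mathcal{D}(E,B_E)$ is a finite collection of isolated vertices, so the prime components of $\lfloor B_E\rfloor$ are automatically pairwise disjoint (any two meeting components would produce a $1$-simplex). To bound the number of such vertices by two, I would appeal to the connectedness of the non-klt locus: the link of $v_{E_Y}$ in the connected $1$-dimensional complex $\mathcal{D}(X,B)$ has at most two connected components, which — via the Koll\'ar--Shokurov / Birkar connectedness principle in the form of \cite[Theorem 1.2]{Bir21} applied to the log Calabi-Yau structure carried by $(E,B_E)$ after adjunction — forces $\lfloor B_E\rfloor$ to have at most two connected (and hence prime) components.

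The routine parts are Steps 1--2 and the regularity-zero conclusion, which reduce to definition-chasing and the standard behaviour of dlt-adjunction under passage to dual complexes. The only delicate point is the final bound ``one or two components''; this is where one must invoke a connectedness input beyond the purely combinatorial $1$-dimensionality of $\mathcal{D}(X,B)$, since an abstract $1$-dimensional graph can in principle have a vertex of arbitrary degree. I expect the argument there to use the log Calabi-Yau structure of the adjoint pair and a Birkar-type connectedness statement for the non-klt locus.
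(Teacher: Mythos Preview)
Your proposal is correct and follows essentially the same route as the paper. Both arguments pass to a $\qq$-factorial dlt modification $(Y,B_Y)$, observe that adjunction to the strict transform $E_Y$ yields a dlt modification of $(E,B_E)$, and then read off the regularity-zero conclusion from the fact that any two intersecting components of $\lfloor B_{E_Y}\rfloor$ would force a triple intersection $E_{Y,1}\cap E_{Y,2}\cap E_Y\neq\emptyset$ in $Y$; you phrase this as ``the link of $v_{E_Y}$ is zero-dimensional,'' while the paper writes out the contradiction directly. For the final bound of at most two components, both you and the paper invoke~\cite[Theorem 1.2]{Bir21}. The only minor technical point the paper makes explicit and you leave implicit is that one may need to blow up further to arrange a bijection between prime components of $\lfloor B_{E_Y}\rfloor$ and components of $\lfloor B_Y-E_Y\rfloor$ meeting $E_Y$, but this is routine and does not affect the argument.
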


\begin{proof}
Assume that $(E,B_E)$ has regularity at least one.
Let $(Y,B_Y)$ be a dlt modification of $(X,B)$.
We denote by $E_Y$ be the strict transform of $E$ on $Y$.
Let $(E_Y,B_{E_Y})$ be the log pair obtained by adjunction of $(Y,B_Y)$ to $E_Y$.
Then, $(E_Y,B_{E_Y})$ is a dlt pair. Thus, the projective birational morphism $E_Y\rightarrow E$ is a dlt modification of $(E,B_E)$.
By further blowing-up, we may assume that there is a bijection between prime components of $\lfloor B_{E_Y}\rfloor$ and components of $\lfloor B_Y-E_Y\rfloor$ that intersect $E_Y$.
If ${\rm reg}(E,B_E)\geq 1$, then we can find at least two prime components $E_1,E_2$ of $\lfloor B_{E_Y}\rfloor$ for which $E_1\cap E_2\neq \emptyset$.
Let $E_{Y,1}$ and $E_{Y,2}$ be two components of $\lfloor B_Y\rfloor$ that restrict to $E_1$ and $E_2$, respectively.
Then, we have that
$E_{Y,1}\cap E_{Y,2}\cap E_Y\neq \emptyset$.
This leads to a contradiction.
The last part of the statement follows from~\cite[Theorem 1.2]{Bir21}.
\end{proof}

\begin{corollary}\label{cor:two-comp-toric}
Let $(X,B)$ be an lc pair of regularity one. Let $E_1,E_2\subset \lfloor B\rfloor$ be two prime components which are $\qq$-Cartier divisors.
Assume that $E_1\cap E_2\neq \emptyset$ is connected.
Then $E_1\cap E_2=Z$ is irreducible, no other log canonical center of $(X,B)$ intersects $Z$ and $(X,B)$ is toric at the generic point of $Z$.
\end{corollary}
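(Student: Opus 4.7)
The plan combines adjunction along $E_1$ with two applications of the complexity inequality (Theorem~\ref{thm:comp}): one on a surface slice to control the coefficients of $B$ near $Z$, and one on $X$ itself after enlarging the boundary by general hyperplane sections, to extract the toric structure at $Z$.

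First I would apply adjunction to $E_1$. Because $E_1$ is $\qq$-Cartier and has coefficient one in $B$, adjunction produces a log canonical pair $(E_1, B_{E_1})$ with $K_{E_1}+B_{E_1}=(K_X+B)|_{E_1}$. By Lemma~\ref{lem:from-reg-1-to-0} this pair has regularity zero, so $\lfloor B_{E_1}\rfloor$ is a disjoint union of at most two irreducible components. Since $E_2$ has coefficient one in $B$, the divisor $E_2|_{E_1}$ lies in $\lfloor B_{E_1}\rfloor$ with coefficient one, and every irreducible component of $E_1\cap E_2$ is a prime component of $\lfloor B_{E_1}\rfloor$. As $Z$ is connected but the prime components of $\lfloor B_{E_1}\rfloor$ are pairwise disjoint, $Z$ consists of a single prime component, giving irreducibility. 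The same argument rules out any further divisorial log canonical center $E_3\subseteq\lfloor B\rfloor$ meeting $Z$: the restriction $E_3|_{E_1}$ would contribute a prime component of $\lfloor B_{E_1}\rfloor$ distinct from $Z$ yet meeting it, a contradiction.

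Next I would fix an arbitrary closed point $p\in Z$ and cut $X$ by $n-2$ sufficiently general hyperplane sections $H_1,\dots,H_{n-2}$ through $p$. A Bertini-type argument shows that $(X, B+\sum_i H_i; p)$ remains log canonical and that the slice $(S, B_S; p)$ with $S:=X\cap\bigcap_i H_i$ is a log canonical surface germ satisfying $B_S\geq E_1|_S+E_2|_S$ with coefficients one. Since the Picard group of the local ring $\mathcal{O}_{S,p}$ is trivial, $\rho(\mathcal{O}_{S,p})=0$, and Theorem~\ref{thm:comp} yields
\[
0\leq c(S,B_S;p) = 2+0-\textstyle\sum b_i,
\]
with $\sum b_i\geq 2$, forcing $\sum b_i=2$ and $c(S,B_S;p)=0$. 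Because the coefficient sum of components of $B$ passing through $p$ is preserved under general slicing, this forces $B=E_1+E_2$ in a neighborhood of $p$ in $X$; no other component of $B$, reduced or fractional, passes through $p$.

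Finally, I would apply Theorem~\ref{thm:comp} a second time to the enlarged pair $(X, E_1+E_2+\sum_i H_i; p)$, which has exactly $n$ reduced prime components through $p$, yielding $c=n+0-n=0$; hence $(X, E_1+E_2+\sum_i H_i; p)$ is formally toric. Dropping the $H_i$'s from the boundary preserves formal toricity, so $(X,B;p)=(X, E_1+E_2;p)$ is formally toric at every closed $p\in Z$, giving the claimed toricity of $(X,B)$ at the generic point of $Z$. In the toric local model at $p$ the log canonical centers through $p$ are $E_1$, $E_2$, and their codimension-two intersection stratum, which locally coincides with $Z$; hence any log canonical center $W$ of $(X,B)$ meeting $Z$ at a point must agree locally, and by irreducibility globally, with one of $E_1$, $E_2$, or $Z$. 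The principal delicacy I anticipate is arranging the hyperplane sections so that the lc-preservation for $(X,B+\sum_i H_i)$, the equality of coefficient sums under slicing, and both complexity computations go through simultaneously; once they do, Theorem~\ref{thm:comp} does all the heavy lifting.
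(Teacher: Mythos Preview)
Your irreducibility argument via adjunction to $E_1$ and Lemma~\ref{lem:from-reg-1-to-0} is exactly the paper's. The genuine gap is in the toricity step: your Bertini-type claim that $(X,B+\sum_i H_i;p)$ remains log canonical at an \emph{arbitrary} closed point $p\in Z$ is false. Take $X=\mathbb{A}^3$, $E_1=\{x=0\}$, $E_2=\{y=0\}$ and $B=E_1+E_2+\tfrac{1}{2}\{z=0\}$; this pair is lc of regularity one with $Z$ equal to the $z$-axis. At the origin any plane $H\ni 0$ gives
\[
a_E(X,B+H)=3-1-1-\tfrac{1}{2}-1=-\tfrac{1}{2}<0
\]
for the exceptional divisor of the blow-up of the origin, so neither $(X,B+H)$ nor the surface slice $(S,B_S)$ is lc there and Theorem~\ref{thm:comp} is unavailable. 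Your subsequent conclusions that $B=E_1+E_2$ in a neighborhood of every $p\in Z$ and that $(X,B)$ is formally toric at every such $p$ are both false in this example; only the generic point of $Z$ has these properties.

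The remedy is to take $p$ general in $Z$, or equivalently to localize at the generic point $\eta_Z$. But once you do that, the hyperplane cuts and the two separate complexity computations collapse: after localization $(X_{\eta_Z},B|_{\eta_Z})$ is a $2$-dimensional lc germ whose boundary already contains $E_1+E_2$, so a single invocation of Theorem~\ref{thm:comp} forces complexity zero and formal toricity at $\eta_Z$. This is precisely the paper's one-line argument, so your route is a longer---and, as written, incorrect---unpacking of the same idea. Note finally that the toric model at $\eta_Z$ only identifies the lc centers \emph{containing} $Z$; your last paragraph about centers meeting $Z$ in a single point overstates what that model gives, though in the applications only the generic-point conclusion is used.
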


\begin{proof}
The irreducibility of $Z$ follows from Lemma~\ref{lem:from-reg-1-to-0} and the fact that every irreducible component of an intersection of log canonical centers is again a log canonical center.
The toroidality of $(X,B)$ at the generic point of $Z$ follows from Theorem~\ref{thm:comp}.
\end{proof}

\subsection{\texorpdfstring{$\pp^1$-links}{p1-links}}
In this subsection, we define the concept of $\pp^1$-link structure of a log Calabi-Yau pair.
We also prove a lemma regarding projective pairs of regularity zero.

\begin{definition}\label{def:p1-links}
{\em 
Let $(X,B)$ be a log Calabi-Yau admitting a fibration $X\rightarrow Z$.
We say that $X\rightarrow Z$ is a {\em $\pp^1$-link}
for $(X,B)$ if the following
conditions are satisfied:
\begin{enumerate}
    \item The general fiber of $X\rightarrow Z$ is isomorphic to $\pp^1$.
    \item the pair $(X,B)$ is plt, 
    \item $(X,B)$ is log Calabi-Yau over $Z$, i.e., 
    $K_X+B\sim_{\qq,Z}0$, and
    \item $\lfloor B\rfloor$ has two components that dominate the base.
\end{enumerate}
In particular, the restriction of $(X,B)$ to a general fiber is isomorphic to the log pair
$(\pp^1,\{0\}+\{\infty\})$.
}
\end{definition}

\begin{lemma}\label{MMP-reg-zero}
Let $(X,B)$ be a dlt log Calabi-Yau pair of regularity zero.
Assume that $\lfloor B\rfloor$ is disconnected.
Let $E_1$ and $E_2$ be the components of $\lfloor B \rfloor$ and $B'=B-\lfloor B\rfloor$.
Let $R$ be an extremal $(K_X+B')$-negative ray of the nef cone.
Then, $R$ is generated by a rational curve $C$ satisfying:
\[
K_X\cdot C=-2, \quad 
E_1\cdot C=1, \quad 
E_2\cdot C=1, 
\text{ and } \quad
B'\cdot C=0.
\] 
\end{lemma}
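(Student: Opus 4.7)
The plan is to realize the extremal ray $R$ as the class of a general fiber of a Mori fiber space produced by a $(K_X+B')$-MMP, exhibit a $\pp^1$-link structure on this MFS, and read off the intersection numbers by adjunction. The setup is that $K_X+B\sim_\qq 0$ gives $K_X+B'\sim_\qq -(E_1+E_2)$, which is not pseudo-effective; that $(X,B')$ is klt since $\coeff(B')\subset[0,1)$; and that regularity zero together with $\lfloor B\rfloor=E_1+E_2$ disconnected forces $E_1\cap E_2=\emptyset$, as any nonempty intersection would be a log canonical center of codimension $2$.

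The first step is to run the $(K_X+B')$-MMP. Non-pseudo-effectivity of $K_X+B'$ guarantees termination with a Mori fiber space $\pi\colon X'\to Z$ via a birational contraction $\phi\colon X\dashrightarrow X'$, and by the cone theorem for the movable cone of curves the ray $R\subset\overline{NM}_1(X)$ is represented by the class of a general fiber of $\pi$. Every step of this MMP is $(E_1+E_2)$-positive, so neither $E_i$ can be divisorially contracted: the associated extremal curve would lie in $E_i$ and hence be disjoint from $E_{3-i}$, forcing $(E_1+E_2)\cdot R_D=E_i\cdot R_D<0$, a contradiction. Since crepant birational modifications of a dlt log Calabi-Yau pair preserve the regularity of the dual complex, the strict transforms of $E_1$ and $E_2$ remain disjoint on $X'$.

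The second step is to analyze the MFS. Let $F$ be a general fiber; then $(E_1+E_2)\cdot F>0$. The main obstacle is showing that both $E_1$ and $E_2$ dominate $Z$. If $E_1$ were vertical with $\pi(E_1)\subsetneq Z$, then for $z\in\pi(E_1)$ the fiber $X'_z$ would meet both $E_1$ and $E_2$ in disjoint loci, contradicting connectedness of the non-klt locus in fibers of a log Calabi-Yau contraction; because $-(K_X+B)\equiv 0$ is not relatively big, this step uses Birkar's connectedness theorem~\cite{Bir21} rather than the classical Koll\'ar-Shokurov statement. Granting this, $E_1|_F,E_2|_F$ are disjoint, nonempty, and effective. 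Since $F$ is Fano with $\rho(F)=1$, if $\dim F\geq 2$ each $E_i|_F$ would be a positive multiple of the ample generator of $N^1(F)$, hence ample; but two ample divisors in dimension $\geq 2$ must intersect. Hence $\dim F=1$ and $F\simeq\pp^1$.

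Finally, adjunction $K_F+B|_F\sim_\qq 0$ on $F\simeq\pp^1$ gives
\[
-2+E_1\cdot F+E_2\cdot F+B'\cdot F=0,
\]
and the constraints $E_i\cdot F\geq 1$ together with $B'\cdot F\geq 0$ force $E_1\cdot F=E_2\cdot F=1$ and $B'\cdot F=0$. Choosing $F$ generic enough to avoid the exceptional loci of $\phi$ and of $\phi^{-1}$, its strict transform $C\subset X$ is a rational movable curve with class in $R$ and the prescribed intersection numbers.
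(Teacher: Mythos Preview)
Your proof is correct and follows essentially the same route as the paper: run a suitably chosen $(K_X+B')$-MMP to a Mori fiber space, argue that both $E_i$ survive and dominate the base, force the general fiber to be $\pp^1$ via a Picard-rank-one ampleness argument, and read off the intersection numbers by adjunction on $\pp^1$. The one point worth tightening is that the MMP must be chosen so that its terminal Mori fiber space has general fiber class exactly $R$---the paper does this explicitly by picking an ample $A$ with $(K_X+B'+A)^{\perp}$ cutting out $R$ and running the MMP with scaling of $A$, whereas your phrasing makes it sound as if any $(K_X+B')$-MMP would do; this is precisely what the nef-cone theorem you invoke provides, but it should be stated as the \emph{existence} of such an MMP rather than a property of an arbitrary one.
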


\begin{proof}
We can find an ample divisor $A$ on $X$ so that the cone 
\[
\overline{NE}_1(X)_{K_X+B'\geq 0} +
\overline{NM}_1(X)
\]
intersects
$(K_X+B'+A)^{\perp}$ at $R$.
We run a $(K_X+B'+A)$-MMP with scaling of $A$.
Since each step of this minimal model program is $A$-positive, 
then it is also a $(K_X+B')$-MMP.
Given that $K_X+B'$ is not pseudo-effective, this minimal model program terminates with a Mori fiber space.
Let $X\dashrightarrow X'$ be the minimal model program and $X'\rightarrow Z$ be the Mori fiber space.
Let $B'$ be the push-forward of $B$ to $X'$.
Every curve contracted by this MMP and every flipping curve intersect $E_1+E_2$ positively.
We conclude that no divisor $E_1$ neither $E_2$ is contracted by this minimal model program.
Let $E'_1$ and $E'_2$ be the strict transform of $E_1$ and $E_2$ in $X'$, respectively.
Note that $E'_1+E'_2$ is ample over $Z$, then at least one of the $E'_i$'s must dominate the base.
Assume $E'_1$ dominates the base.
If $E'_2$ is vertical over $Z$, then $E'_1\cap E'_2\neq \emptyset$, leading to a contradiction.
We conclude that $E'_1$ and $E'_2$ dominate $Z$.
Assume that the general fiber of $X'\rightarrow Z$ has dimension at least two. We call the general fiber $F$.
Then $E'_1|_F$ and $E'_2|_F$ are ample divisors and then they must intersect non-trivially. leading to a contradiction.
We conclude that the general fiber of $X'\rightarrow Z$ is one-dimensional. Then it is isomorphic to $\pp^1$.
We conclude that $(X',B')\rightarrow Z$ is a $\pp^1$-link.
Then, it suffices to take $C$ to be the strict transform of a general fiber of $X'\rightarrow Z$.
The strict transform of $C$ in $X$ generates the ray $R$.
Since $X\dashrightarrow X'$ is an isomorphism on a neighborhood of such general fiber, 
we conclude that the intersections 
$K_X\cdot C=-2$, $E_1\cdot C=E_2\cdot C=1$, and
$B'\cdot C=0$ hold.
\end{proof}

\begin{definition}\label{def:gen-p1}
{\em 
Let $(X,B)$ be a dlt log Calabi-Yau pair of regularity zero.
Let $R$ be an extremal $(K_X+B')$-negative ray
of the nef cone. 
The curve $C$ constructed in Lemma~\ref{MMP-reg-zero} will be called a {\em general $\pp^1$} of the log pair of regularity zero.
In Theorem~\ref{introthm:uniqueness}, we will see that $R$ is unique, 
so $C$ is unique up to numerical equivalence.
}
\end{definition} 

\subsection{Surface log discrepancies}
In this subsection, we prove a couple of lemmas regarding surface toric minimal log discrepancies.
For the material regarding toric geometry, we refer the reader to~\cite{Ful93,CLS11}.
For toric minimal log discrepancies see~\cite{Amb06}.
The first two lemmas are well-known so
we will skip their proofs. 

\begin{lemma}\label{lem:min-lattice}
Let $\sigma$ be a cone in $\qq^2$ spanned by the lattice vectors $v_1$ and $v_2$.
Let $w_1,\dots,w_c \in \zz^2$ give a regular decomposition of the cone.
Let $L$ be a positive linear function on $\sigma$.
Then, the minimizer of $L$ in 
\[
(\sigma\setminus \mathbb{Q}_{\geq 0} v_1) \cap \zz^2
\]
is attained in the finite set
$\{ w_1,\dots,w_c,v_2\}$.
\end{lemma}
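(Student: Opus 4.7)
The plan is to leverage the regular decomposition directly: every lattice point of $\sigma$ lands in one of the maximal subcones of the refinement, each of which is spanned by a $\zz$-basis, which reduces the problem to a trivial non-negative integer combination argument.

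First I would order the $w_i$ so that the maximal cones of the regular decomposition are
\[
\tau_0=\langle v_1,w_1\rangle,\quad \tau_i=\langle w_i,w_{i+1}\rangle \text{ for } 1\le i\le c-1,\quad \tau_c=\langle w_c,v_2\rangle,
\]
each of which is spanned by a $\zz$-basis of $\zz^2$ (this is the definition of the decomposition being regular). In particular, for any lattice point $p\in \sigma\cap \zz^2$, choosing a $\tau_i$ containing $p$, we may write $p=\alpha u+\beta u'$ with $\alpha,\beta\in \zz_{\geq 0}$, where $\{u,u'\}$ are the generators of $\tau_i$. This is the basic structural input; the rest of the proof is a case analysis driven by positivity of $L$.

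Next I would take an arbitrary $p\in (\sigma\setminus \qq_{\geq 0}v_1)\cap \zz^2$ and bound $L(p)$ from below. If $p\in \tau_i$ with $1\le i\le c-1$, then $p=\alpha w_i+\beta w_{i+1}$ with $\alpha+\beta\ge 1$ (since $p\neq 0$, as $L$ is positive and $p$ would then have to be nonzero for the bound to be nontrivial), so $L(p)\ge \min\{L(w_i),L(w_{i+1})\}$ by positivity of $L$ on the generators. If $p\in \tau_c$, the same reasoning gives $L(p)\ge \min\{L(w_c),L(v_2)\}$. The slightly delicate case is $p\in \tau_0$: here $p=\alpha v_1+\beta w_1$ with $\alpha,\beta\in \zz_{\geq 0}$, and the hypothesis $p\notin \qq_{\geq 0}v_1$ forces $\beta\ge 1$. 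Since $\alpha L(v_1)\ge 0$, this gives $L(p)\ge \beta L(w_1)\ge L(w_1)$.

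Combining the three cases, any $p$ in the set satisfies
\[
L(p)\ \ge\ \min\bigl\{L(w_1),\dots,L(w_c),L(v_2)\bigr\},
\]
and since each $w_j$ and $v_2$ itself belongs to $(\sigma\setminus \qq_{\geq 0}v_1)\cap \zz^2$, the minimum is attained in $\{w_1,\dots,w_c,v_2\}$, as desired. There is no substantive obstacle here; the only point that requires attention is the boundary case $p\in \tau_0$, where the exclusion of $\qq_{\geq 0}v_1$ is precisely what is needed to guarantee a nonzero $w_1$-coefficient and hence prevent the minimum from being pulled down arbitrarily close to $0$ along the ray $\qq_{\geq 0}v_1$.
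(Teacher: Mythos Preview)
Your argument is correct. The paper itself does not supply a proof of this lemma, stating only that it is well-known; your approach via the $\zz$-basis property of each maximal cone in the regular fan, together with the boundary analysis on $\tau_0$ to handle the excluded ray $\qq_{\geq 0}v_1$, is the standard and expected one.
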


\begin{lemma}\label{lem:toric-surf-mld}
Let $\sigma\subset \qq^2$ be a full-dimensional rational polyhedral cone. Let $v_1$ and $v_2$ be the lattice generators of the extremal rays of $\sigma$. 
Let $X(\sigma)$ be the corresponding affine toric surface and $x_0$ the torus invariant point. Let $T_1$ and $T_2$ be the torus invariant divisors of $X(\sigma)$.
Let $b_1$ and $b_2$ be two real numbers.
Then, we have that
\[
{\rm mld}(X(\sigma),b_1T_1+b_2T_2;x_0)=
\min_{u\in {\rm relint}(\sigma)\cap \zz^2}L(u),
\]
where $L$ is the unique linear function with
$L(v_1)=1-b_1$ and $L(v_2)=1-b_2$.
\end{lemma}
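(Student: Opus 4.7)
The plan is to translate the computation into toric combinatorics and then invoke the toric minimal log discrepancy formula of Ambro~\cite{Amb06}, which guarantees that the mld at a torus fixed point is attained on a torus-invariant divisorial valuation.

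First, I would recall the standard dictionary from toric geometry: torus-invariant prime divisors over $X(\sigma)$ whose center on $X(\sigma)$ is $x_0$ are in bijection with primitive lattice points $u \in \relint(\sigma) \cap \zz^2$. Such a divisor $D_u$ can be realized as the exceptional divisor of the star subdivision of $\sigma$ along the ray $\rr_{\geq 0} u$; the fact that $u$ lies strictly in the relative interior, as opposed to on a bounding ray $\rr_{\geq 0} v_i$, is exactly what forces $c_{X(\sigma)}(D_u) = x_0$.

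Next, I would compute the log discrepancy explicitly. The $\qq$-Cartier divisor $K_{X(\sigma)} + b_1 T_1 + b_2 T_2 = (b_1-1)T_1 + (b_2-1)T_2$ is encoded by a support function on $\sigma$, and since $\sigma$ is two-dimensional with only two bounding rays, this support function is automatically linear on all of $\sigma$. With the usual sign conventions, that linear function is precisely the $L$ in the statement, determined by $L(v_i) = 1 - b_i$. The standard toric formula for discrepancies along star subdivisions then yields
\[
a_{D_u}(X(\sigma), b_1 T_1 + b_2 T_2) = L(u)
\]
for every primitive $u \in \relint(\sigma) \cap \zz^2$.

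Combining these two steps with Ambro's theorem gives
\[
\mld(X(\sigma), b_1 T_1 + b_2 T_2; x_0) = \min\bigl\{ L(u) \,\bigm|\, u \text{ primitive in } \relint(\sigma) \cap \zz^2 \bigr\}.
\]
To conclude, I would observe that by linearity of $L$ a non-primitive lattice point $ku'$ (with $u'$ primitive and $k \geq 2$) satisfies $L(ku') = kL(u')$, so passing from the minimum over primitive vectors to the minimum over all lattice points in $\relint(\sigma) \cap \zz^2$ does not change the value when the pair is log canonical, and in the non-lc case both sides are $-\infty$. The only real obstacle is clerical: one must track sign conventions carefully in passing from $K_{X(\sigma)} + b_1T_1 + b_2T_2$ to its support function, and from the support function to the log discrepancy formula, but this is standard and thoroughly treated in~\cite{Ful93, CLS11}.
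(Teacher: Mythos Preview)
The paper does not give a proof of this lemma: immediately before it, the author writes ``The first two lemmas are well-known so we will skip their proofs,'' and simply cites~\cite{Ful93,CLS11,Amb06} for background. So there is no argument in the paper to compare your proposal against.

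Your proposal is correct and is precisely the standard argument one would expect to fill in here. The three ingredients---the bijection between primitive $u\in\relint(\sigma)\cap\zz^2$ and torus-invariant exceptional divisors with center $x_0$, the toric log discrepancy formula $a_{D_u}(X(\sigma),b_1T_1+b_2T_2)=L(u)$, and Ambro's result that for toric pairs the mld is computed by a torus-invariant valuation---are exactly what the cited references supply. Your closing remark on primitive versus non-primitive lattice points is also correct: when the pair is lc the minimum over primitives agrees with the minimum over all of $\relint(\sigma)\cap\zz^2$, and otherwise both are $-\infty$. In short, you have written out the routine verification that the paper elected to omit.
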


\begin{lemma}\label{lem:k-th-surf-toric-mld}
Let $\Lambda \subset \rr_{\leq 1}$ be a set satisfying the descending chain condition.
Let $k$ be a positive integer.
Let $\mathcal{M}_{T,\Lambda,k}$ be the set of $k$-th minimal log discrepancies of surface toric sub-pairs
$(X,\Delta;x)$ with 
${\rm coeff}(\Delta)\subset \Lambda$.
Then, the set
$\mathcal{M}_{T,\Lambda,k}$ satisfies the ascending chain condition.
Furthermore, if $\Lambda$ is finite, then its accumulation points only accumulate to zero.
\end{lemma}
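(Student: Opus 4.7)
The plan is to induct on $k$, running the argument over all DCC sets $\Lambda$ simultaneously. The base case $k=1$ is the ACC (and accumulation-only-at-zero when $\Lambda$ is finite) of first minimal log discrepancies of two-dimensional toric sub-pairs with DCC coefficients. I would deduce it from Lemmas~\ref{lem:toric-surf-mld} and~\ref{lem:min-lattice}: the first mld is attained at some interior generator $w_j$ of the minimal regular decomposition of $\sigma$; writing $w_j = p_j v_1 + q_j v_2$, the value is $p_j(1-b_1) + q_j(1-b_2)$, and a subsequence argument on the continued-fraction data of $\sigma$ combined with the DCC of $\Lambda$ yields the base case.

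For the inductive step, assume the conclusion for all indices $j \leq k-1$ and every DCC set. Fix a toric surface sub-pair $(X(\sigma), \Delta = b_1 T_1 + b_2 T_2; x_0)$ with $b_1, b_2 \in \Lambda$, and let $L_1$ be its first mld, realized at a lattice point $u_1 \in \relint(\sigma) \cap \zz^2$. The toric blow-up $Y \to X(\sigma)$ subdividing $\sigma$ along the ray through $u_1$ introduces a new exceptional divisor $D$ that appears with coefficient $1 - L_1$ in the pull-back sub-pair. Consequently, at each of the two new torus-invariant points $x^{(1)}, x^{(2)}$, the coefficients of the local sub-pair lie in the set $\Lambda_1 := \Lambda \cup (1 - \mathcal{M}_{T,\Lambda,1})$, which is DCC by the base case.

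The critical observation is the disjoint partition
\[
\relint(\sigma) \cap \zz^2 \;=\; \{m u_1 : m \geq 1\} \sqcup (\relint(\sigma^{(1)}) \cap \zz^2) \sqcup (\relint(\sigma^{(2)}) \cap \zz^2),
\]
where $\sigma^{(1)}, \sigma^{(2)}$ are the maximal cones of the subdivided fan. By Lemma~\ref{lem:toric-surf-mld}, the sorted multiset of $L$-values on the left is the sorted merge of $\{m L_1\}_{m \geq 1}$ with the sorted multisets of $L$-values at $x^{(1)}$ and $x^{(2)}$; the $j$-th smallest entry in either of the latter multisets is by definition the $j$-th mld at that point, hence lies in $\mathcal{M}_{T,\Lambda_1,j}$. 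Because $u_1$ lies in the first piece and already realizes the global minimum $L_1$, at most $k-1$ of the first $k$ entries in the merged list can come from the second or third pieces. This gives
\[
\mathcal{M}_{T,\Lambda,k} \;\subset\; \bigcup_{m=1}^{k} m \cdot \mathcal{M}_{T,\Lambda,1} \;\cup\; \bigcup_{j=1}^{k-1} \mathcal{M}_{T,\Lambda_1,j},
\]
and each piece on the right is ACC---the first by the base case together with the stability of ACC under positive-integer scaling, the second by the inductive hypothesis applied to $\Lambda_1$ at strictly smaller index. A finite union of ACC sets is ACC, proving the first assertion. The accumulation-at-zero statement for finite $\Lambda$ follows by the same bookkeeping, since that property too is preserved by positive-integer scaling, finite unions, and the inductive hypothesis.

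The main obstacle is the base case: the coefficients $(p_j, q_j)$ expressing an interior generator $w_j$ as a positive rational combination of $v_1, v_2$ have denominators governed by the continued-fraction length of $\sigma$, which is unbounded as $\sigma$ varies. A careful subsequence argument on the Hirzebruch--Jung continued-fraction data is therefore required in order to extract ACC of the minimizing values $L(w_j)$ from the DCC of $\Lambda$; once this is in hand, the inductive step is essentially a bookkeeping exercise with the partition above.
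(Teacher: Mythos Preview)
Your inductive framework is essentially the same as the paper's: both induct on $k$ uniformly over all DCC coefficient sets, extract divisors computing early log discrepancies, and observe that the pull-back coefficients lie in an enlarged DCC set $\Lambda' = \Lambda \cup \{1-m : m \in \text{(lower mlds)}\}$, thereby reducing to smaller index. The paper extracts all $k-1$ divisors at once and lands directly in $\mathcal{M}_{T,\Lambda',1}$, while you extract only the first and land in $\bigcup_{j\leq k-1}\mathcal{M}_{T,\Lambda_1,j}$ together with the multiples $m\cdot L_1$; these are cosmetic variants of the same reduction.

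The genuine gap is the base case $k=1$, which you yourself flag as the ``main obstacle'' and do not actually prove. Your plan---a subsequence argument on the Hirzebruch--Jung data of $\sigma$ to control the rational coefficients $(p_j,q_j)$ in $w_j = p_j v_1 + q_j v_2$---is not carried out, and it is not clear it can be made to work cleanly, since the continued-fraction length is unbounded and the denominators of $(p_j,q_j)$ are not a priori controlled. The paper sidesteps this entirely with a one-line rescaling trick: if $-M$ is a lower bound for $\Lambda$, replace $b_i$ by $1-\frac{1-b_i}{M+1}\in[0,1]$; by Lemma~\ref{lem:toric-surf-mld} this multiplies the mld by the fixed constant $M+1$ and turns the sub-pair into an honest toric \emph{pair} with coefficients in a DCC set depending only on $\Lambda$. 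The base case then follows immediately from Ambro's ACC for minimal log discrepancies of toric pairs~\cite[Theorem 1.1]{Amb06}. This is the missing idea in your argument; once you insert it, your inductive step goes through exactly as written.
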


\begin{proof}\label{cor:up-to-k-th-surf-toric-mld}
First, we prove the statement for $k=1$.
Let $-M$ be a lower bound of $\Lambda$.
We can assume that $M>0$.
Let $T_1$ and $T_2$ be the reduced prime toric divisors of $\Delta$.
Let $b_1$ and $b_2$ be the coefficients of $\Delta$ at $T_1$ and $T_2$ respectively.
Then, by Lemma~\ref{lem:toric-surf-mld}, we have that
\begin{equation}\label{eq:mld-mult}
{\rm mld}(X,\Delta;x) =
(M+1) {\rm mld}\left( 
X, 
\left(1- \frac{1-b_1}{M+1}\right)T_1 
+
\left(1 -\frac{1-b_2}{M+1}\right)T_2;
x
\right).
\end{equation} 
Note that 
\begin{equation}\label{eq:new-coeff}
0\leq 1-\frac{1-b_i}{M+1}\leq 1,
\end{equation} 
for $i\in \{1,2\}$.
We conclude that the sub-pair on the right side of equality~\eqref{eq:mld-mult} is actually a surface toric pair.
Furthermore, since $b_i \in \Lambda$, we conclude that the coefficients~\eqref{eq:new-coeff} belong to a set satisfying the descending chain condition, which only depends on $\Lambda$.
Then, since $M$ is a fixed number, we conclude that the value on the right side of equality~\eqref{eq:mld-mult} belongs to a set satisfying the ascending chain condition (see, e.g.,~\cite[Theorem 1.1]{Amb06}).
Furthermore, if $\Lambda$ is finite, then its accumulation points only accumulate to zero.
This shows the statement for $k=1$.

Now, assume that the statement holds for $k-1$.
Then, the set 
\begin{equation}\label{eq:union-mld}
\mathcal{M}_{T,\Lambda,\leq k-1}:=
\bigcup_{i=1}^{k-1} \mathcal{M}_{T,\Lambda,i}
\end{equation} 
satisfies the ascending chain condition.
Indeed, it is a finite union of sets satisfying the ascending chain condition.
Furthermore, the set $\mathcal{M}_{T,\Lambda,\leq k-1}$
only depends on $\Lambda$.
Moreover, if $\Lambda$ is finite, then the accumulation points of the set~\eqref{eq:union-mld} only accumulate to zero.
Let $(X,\Delta;x)$ be a singularity as in the statement.
We set
\[
\mathcal{C}_{T,\Lambda,\leq k-1}:=
\{1-m\mid m\in \mathcal{M}_{T,\Lambda,\leq k-1}\}. 
\]
We can find a toric projective birational morphism $\phi\colon Y\rightarrow X$ which extracts up to the $(k-1)$-th minimal log discrepancy of $(X,\Delta;x)$.
Hence, we can write 
\[
\phi^*(K_X+\Delta)=
K_Y+\Delta_Y,
\] 
where $(Y,\Delta_Y)$ is a toric sub-pair and 
\[
{\rm coeff}(\Delta_Y) \subseteq 
\Lambda':=\Lambda \cup 
\mathcal{C}_{T,\Lambda,\leq k-1}.
\]
Observe that $\Lambda' \subset \rr_{\leq 1}$ is a set satisfying the descending chain condition,
which only depends on $\Lambda$ and $k$.
Then, we conclude that
\[ 
\mathcal{M}_{T,\Lambda,k}
\subseteq \mathcal{M}_{T,\Lambda',1}.
\] 
By the case $k=1$,
we conclude that
$\mathcal{M}_{T,\Lambda,k}$
satisfies the ascending chain condition.
Furthermore, if $\Lambda$ is finite, then the accumulation points
of $\mathcal{M}_{T,\Lambda,k}$ can only accumulate to zero.
\end{proof}

\section{The cone of nef curves of log Calabi-Yau pairs} 
\label{sec:cone-nef}

In this section, we prove a structure theorem for the nef cone 
of log Calabi-Yau pairs with dlt singularities.
Furthermore, we prove the existence of the uniqueness of $\pp^1$-links for log Calabi-Yau pairs with regularity zero.

\begin{proof}[Proof of Theorem~\ref{introthm:nef-cone}]
Let $(X,B)$ be a log Calabi-Yau dlt pair
with $N(K_X+B)\sim 0$.
Let $\phi\colon X'\rightarrow X$ be a small $\qq$-factorialization of $X$.
Note that the cone of nef curves is preserved by the small $\qq$-factorialization, 
so we may replace $X$ with $X'$ and assume that $X$ itself is $\qq$-factorial.

We show that the log pair $(X,B')$ has $1/N$-log canonical singularities.
Assume this is not the case.
Let $E$ be a prime divisor over $X$ for which
$a_E(X,B') \in (0,N^{-1})$.
Then, we have that $a_E(X,B)=0$.
Indeed, we know that $N(K_X+B)\sim 0$, so the log discrepancies of $(X,B)$ belong to the set $\zz\left[\frac{1}{N}\right]$.
On the other hand, the generic point of each log canonical center of $(X,B)$ lies on the smooth locus of $X$ and it is disjoint from $B'$. 
Thus, for each $E$ with $a_E(X,B') \in (0,N^{-1})$, 
we have that $a_E(X)=a_E(X,B')\geq 1$.
This leads to a contradiction.
Thus, $(X,B')$ has $1/N$-log canonical singularities.

Let $R$ be a $(K_X+B')$-negative extremal ray of the cone of nef divisors.
We can find an ample divisor $A$ so that $K_X+B'+A$ is pseudo-effective and
$(K_X+B'+A)^{\perp}$ intersects the cone 
\[
\overline{NE}_1(X)_{K_X+B'\geq 0} 
+ 
\overline{NM}_1(X)
\]
exactly at $R$.
We can run a $(K_X+B')$-MMP with scaling of $A$ which terminates in a Mori fiber space.
Let $\psi \colon X\dashrightarrow X'$ be the minimal model program
and let $\phi\colon X'\rightarrow Z$ be the Mori fiber space.
Let $B'_{X'}$ and $A_{X'}$ be the strict transform of 
$B'$ and $A$ in $X'$, respectively.
By construction, $K_X'+B'_{X'}+A_{X'}$ is $\qq$-trivial over $Z$.
Let $E_{X'}$ be the strict transform of $E$ on $X'$.
Observe that $K_{X'}+B_{X'}+E_{X'}$ is $\qq$-trivial over $Z$
and that $-(K_{X'}+B_{X'})$ is ample over $Z$.
We conclude that $E_{X'}$ is ample over $Z$.
Let $F$ be a general fiber of $\phi\colon X'\rightarrow Z$. 
Let $B_F$ and $E_F$ be the restriction of $B_{X'}$ and $E_{X'}$ to $F$, respectively.
Since $(X,B)$ is $\frac{1}{N}$-log canonical and
$X\dashrightarrow X'$ is a $(K_X+B)$-MMP,
then $(X',B_{X'})$ is $\frac{1}{N}$-log canonical as well.
In particular, we have that $(F,B_F)$ is $\frac{1}{N}$-log canonical 
and $-(K_F+B_F)$ is ample.
Let $f$ be the dimension of $F$.
By~\cite[Theorem 1.1]{Bir21}, we conclude that $F$ belongs to a bounded family of $f$-dimensional varieties.
Since the coefficients of $B_F$ are at least $\frac{1}{N}$, by~\cite[Theorem 3.3]{FM20}, we conclude that
$(F,B_F)$ is log bounded.
Furthermore, the log pair $(F,B_F+E_F)$ is log Calabi-Yau and log bounded as well.
We denote by $E_{i,F}$ the restriction of the prime component $E_{i,X'}$ to $F$.
By~\cite[Lemma 2.19]{Mor18b}, we can find a curve $C_F$ on $F$, so that the following conditions are satisfied:
\begin{enumerate}
    \item the curve $C_F$ lies in the smooth locus of $F$,
    \item the curve $C_F$ is either disjoint from $B_F$ or intersect it transversally with $B_F\cdot C_F\leq k$, and
    \item for each $i$, the curve $C_F$ is either disjoint from $E_{i,F}$ or intersect it transversally at most in $k$ points.
\end{enumerate}
We can choose $C_F$ so that it is disjoint from $F\cap {\rm Ex}(\psi^{-1})$.
Then, the curve $C$ which is the strict transform of $C_F$ in $X$ satisfies the conditions $(1)$-$(4)$.
It suffices to prove that $(5)$ holds.
To do so, it is enough to prove that $C_F$ satisfies the analogous property.
Since $E_{X'}$ is ample over $C$, then $C_F$ must intersect at least one of its components.
We turn to prove that $C_F$ intersects at most $n+1$ of its components.
Up to re-ordering the $E_{i,X'}$, we may assume that there exists $r_0\leq r$ for which the divisors $E_{1,X'},\dots,E_{r_0,X'}$ dominate $Z$ and 
$E_{r_0+1,X'},\dots, E_{r,X'}$ are vertical over $Z$.
Since $\rho(X'/Z)=1$, 
we have that 
the components 
$E_{i,F}$ are $\qq$-linearly proportional. This means that 
\[
\dim_{\mathbb{Q}}
\langle 
E_{1,F}, \dots,
E_{r_0,F} 
\rangle = 1.
\]
Hence, we can compute the complexity
of the pair $(F,B_F+E_F)$ and obtain
\[
0\leq c(F,B_F+E_F) \leq f + 1 - r_0.
\]
The first inequality follows from~\cite[Theorem 4.5]{RS21} while the second inequality follows from~\cite[Definition 3.15]{RS21}.
Thus, we have that $r_0\leq f+1$.
We conclude that there are at most $f+1$ prime components of $E_{X'}$ that dominate $Z$.
This implies that the curve $C_F$ intersects at most $n+1$ of the components of $E_{X'}$.
Hence, the curve $C$ intersects at most
$n+1$ of the components $E_1,\dots,E_r$ of $\lfloor B\rfloor$.

We conclude that every $(K_X+B')$-negative extremal nef ray is spanned by a curve $C_i$ satisfying the conditions $(1)$-$(5)$.
Then, the theorem follows from~\cite[Theorem 1.3]{Leh12}.
\end{proof}

\begin{lemma}\label{lem:semiample-big}
Assume that Theorem~\ref{introthm:uniqueness} holds in dimension $d$.
Let $(F,B_F)$ be a $d$-dimensional log Calabi-Yau dlt pair of regularity zero.
Let $E_{1,F}$ and $E_{2,F}$ be the prime components of $\lfloor B_F\rfloor$ and $B'_F=B_F-\lfloor B_F\rfloor$.
Assume that the following conditions hold:
\begin{enumerate}
    \item $-(K_{F}+B'_F)$ is semiample and big, and
    \item $-(K_{F}+B'_F) \sim_\qq 2E_{1,F} \sim_\qq 2E_{2,F}$.
\end{enumerate}
Then, $F\simeq \pp^1$, $B'_F=0$, and  $E_{1,F}+E_{2,F}=\{0\}+\{\infty\}$.
\end{lemma}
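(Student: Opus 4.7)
The strategy is to reduce to the ample case of Theorem~\ref{introthm:uniqueness} (which we are assuming in dimension $d$) by passing to the ample model of $-(K_F+B'_F)$, and to force $d=1$ from that theorem's conclusion.

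First, since $-(K_F+B'_F)$ is semiample and big, for $m$ sufficiently divisible the linear system $|{-}m(K_F+B'_F)|$ defines a birational contraction $\phi\colon F\to F_a$ onto a normal projective variety $F_a$ of dimension $d$, with $-(K_F+B'_F)\sim_\qq \phi^*A$ for an ample $\qq$-Cartier divisor $A$ on $F_a$. Because $E_{i,F}\sim_\qq -(K_F+B'_F)/2$ is big, each $E_{i,F}$ satisfies $E_{i,F}^d>0$ and is not $\phi$-exceptional. Setting $B_{F_a}:=\phi_*B_F$, the relation $K_F+B_F\sim_\qq 0$ makes $\phi$ a $(K_F+B_F)$-trivial contraction, so $(F_a,B_{F_a})$ is log canonical with $K_{F_a}+B_{F_a}\sim_\qq 0$, $-(K_{F_a}+B'_{F_a})\sim_\qq A$ ample, and $\lfloor B_{F_a}\rfloor = E_{1,F_a}+E_{2,F_a}$.

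To invoke Theorem~\ref{introthm:uniqueness} in dimension $d$ I need a $\qq$-factorial dlt representative of $(F_a,B_{F_a})$. I would take a $\qq$-factorial dlt modification $\pi\colon\widetilde F\to F_a$ via Lemma~\ref{lem:existence-dlt-mod}, yielding a dlt log Calabi-Yau pair $(\widetilde F,\widetilde B)$ with $K_{\widetilde F}+\widetilde B\sim_\qq 0$ and with each $\pi$-exceptional prime divisor lying in $\lfloor\widetilde B\rfloor$ with coefficient one. Comparing $(\widetilde F,\widetilde B)$ with the original dlt pair $(F,B_F)$ through the induced crepant birational map $\widetilde F\dashrightarrow F$, and appealing to~\cite[Lemma 2.32]{FS20} for invariance of the dual complex under crepant small birational equivalence, one identifies $\mathcal{D}(\widetilde F,\widetilde B)=\mathcal{D}(F,B_F)$. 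Hence $(\widetilde F,\widetilde B)$ has regularity zero with exactly two disjoint prime components of $\lfloor\widetilde B\rfloor$ dominating $E_{1,F_a}$ and $E_{2,F_a}$. The ample case of Theorem~\ref{introthm:uniqueness} then yields $F_a\simeq\pp^1$, $B'_{F_a}=0$, and $E_{1,F_a}+E_{2,F_a}=\{0\}+\{\infty\}$, which forces $d=1$. Since $\phi\colon F\to\pp^1$ is then a birational morphism of smooth projective curves, it is an isomorphism, and we recover $F\simeq\pp^1$, $B'_F=0$, and $E_{1,F}+E_{2,F}=\{0\}+\{\infty\}$.

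The delicate technical step is the comparison between $\widetilde F$ and $F$: one must verify that every $\phi$-exceptional prime divisor is already a component of $\lfloor B_F\rfloor$, so that $F$ itself (after a small $\qq$-factorialization) can be taken as a $\qq$-factorial dlt modification of $(F_a,B_{F_a})$ sharing the same dual complex as $\widetilde F$. This is where the $(K_F+B_F)$-triviality of $\phi$ and the proportionality $-(K_F+B'_F)\sim_\qq 2E_{1,F}\sim_\qq 2E_{2,F}$ must intervene, constraining the exceptional locus of $\phi$ to lie within the support of $\lfloor B_F\rfloor$ and controlling the discrepancies at $\phi$-exceptional divisors; together with the regularity-zero hypothesis on $(F,B_F)$, this should rule out any new log canonical places meeting $E_{1,F_a}$ or $E_{2,F_a}$ on $\widetilde F$.
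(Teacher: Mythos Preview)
Your strategy --- pass to the ample model of $-(K_F+B'_F)$ and apply the ample case of Theorem~\ref{introthm:uniqueness} in dimension $d$ to force $d=1$ --- is exactly the paper's. But your final paragraph goes in the wrong direction. You propose to verify that every $\phi$-exceptional prime divisor lies in $\lfloor B_F\rfloor$, so that $F$ itself serves as a dlt modification of $(F_a,B_{F_a})$. That claim is generally false: nothing forces a $\phi$-exceptional divisor to have coefficient one in $B_F$, so your proposed verification cannot succeed. What is needed is the opposite observation. Since $(F,B_F)$ is plt (regularity zero, two disjoint reduced components), its only log canonical places are $E_{1,F}$ and $E_{2,F}$, neither of which is contracted. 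As $\phi$ is $(K_F+B_F)$-crepant, $(F_a,B_{F_a})$ has exactly these same two log canonical places, already present on $F_a$ as $E_{1,F_a}$ and $E_{2,F_a}$. Consequently any dlt modification of $(F_a,B_{F_a})$ is small, and its reduced boundary has exactly two disjoint prime components --- so your comparison of dual complexes is immediate, with no appeal to the exceptional locus lying in $\lfloor B_F\rfloor$.

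The paper avoids the detour through $\widetilde F$ altogether. After a small $\qq$-factorialization of $F$, condition~(2) gives $E_{i,F}\cdot C=0$ for every curve $C$ contracted by $F\to F'$; this is used to see that $E_{1,F}$ and $E_{2,F}$ are not contracted, and the paper concludes directly that $(F',B_{F'})$ is a dlt log Calabi-Yau pair of regularity zero with $-(K_{F'}+B'_{F'})$ ample. Theorem~\ref{introthm:uniqueness} is then applied to $F'$ itself, yielding $F'\simeq\pp^1$ and hence $d=1$.
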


\begin{proof}
We may replace $F$ with a small $\qq$-factorialization.
From now on, we may assume that $F$ is itself $\qq$-factorial.
Let $F\rightarrow F'$ be the ample model of $-(K_{F}+B'_F)$.
Let $B_{F'}$ be the push-forward of $B_{F}$ to $F'$.
Let $B'_{F'}$ be the push-forward of $B'_F$ to $F'$.
Since $-(K_{F}+B'_F)\sim_\qq 2E_{1,F}\sim_\qq 2E_{2,F}$, then every curve contracted by $F\rightarrow F'$ must intersect $E_{1,F}$ and $E_{2,F}$ trivially.
In particular, the divisors $E_{1,F}$ and $E_{2,F}$ are not contracted by the birational morphism $F'\rightarrow F$.
Then, the pair $(F',B_{F'})$ is log Calabi-Yau dlt pair of regularity zero and $-(K_{F'}+B'_{F'})$ is ample.
By Theorem~\ref{introthm:uniqueness} in dimension $d$, we conclude that $F'\simeq \pp^1$, 
$B'_{F'}=0$, and $E_{1,F'}+E_{2,F'}=\{0\}+\{\infty\}$.
This implies that $F$ must be one-dimensional, so the proof follows.
\end{proof}

\begin{proof}[Proof of Theorem~\ref{introthm:uniqueness}]
We will proceed by induction on the dimension.
If the dimension is one,
then it is clear that $X\simeq \pp^1$, $B'=0$,
and $B=\{0\}+\{\infty\}$.

Now, assume that $X$ has dimension at least two.
First, we will analyze the statement when the Picard rank is at most $2$.
Assume that $X$ has Picard rank one.
Let $E_1$ and $E_2$ be the components of $\lfloor B\rfloor$.
Since $\dim X \geq 2$, we can find a curve $C\subseteq E_1$.
Since $\rho(X)=1$, the divisor $E_2$ is an ample divisor and hence
$E_2\cdot C>0$.
This means that $E_1\cap E_2\neq \emptyset$, leading to a contradiction.

Assume that $X$ has Picard rank two and dimension at least two.
If the cone 
\begin{equation}\label{eq:cone-1}
\overline{NE}_1(X)_{K_X+B'\geq 0} + \overline{NM}_1(X)
\end{equation} 
has two extremal $(K_X+B')$-negative curves, 
it means that $-(K_X+B')$ must be an ample divisor.
Let $R_1$ and $R_2$ be the extremal rays of the cone of nef curves.
By Lemma~\ref{lem:curves-generating}, we can find $C_1$ and $C_2$, 
generating $R_1$ and $R_2$, respectively, such that
\[
K_X\cdot C_i = -2, \quad 
E_1 \cdot C_i=1, \quad 
E_2 \cdot C_i=1, \text{ and } \quad 
B'\cdot C_i=0,
\]
for each $i\in \{1,2\}$.
Since $C_1$ and $C_2$ span different rays in $N_1(X)$, we have that
\[
B'=0 \text{ and }
E_1\equiv E_2 \equiv -\frac{1}{2}K_X.
\]
This means that $E_1$ and $E_2$ are ample divisors.
This leads to a contradiction.
We conclude that the cone~\eqref{eq:cone-1} has a unique
$(K_X+B')$-negative extremal ray.

Assume that $X$ has Picard rank larger or equal than three.
If the cone~\eqref{eq:cone-1} has at least two $(K_X+B')$-negative
extremal rays, then it has a $(K_X+B')$-negative extremal face of dimension two.
We denote by $F$ such an extremal face of dimension two.
Then, we can find an ample divisor $A$ on $X$ so that
$K_X+B'+A$ is pseudo-effective and 
\[
(K_X+B'+A)^{\perp} \cap
\left( 
\overline{NE}_1(X)_{K_X+B'\geq 0} + \overline{NM}_1(X)
\right) =  F.
\]
We run a $(K_X+B'+A)$-MMP with scaling of $A$.
By~\cite[Corollary 1.4.2]{BCHM10}, this minimal model program terminates $\phi\colon X\dashrightarrow X_0$ with a good minimal model $X_0$.
Let $B'_0$ and $A_0$ be the push-forward of $B'$ and $A$ to $X_0$, respectively.
Let $Z$ be the ample model of the semiample divisor
$K_{X_0}+B'_0+A_0$.
Let $d$ be the relative dimension of $X_0\rightarrow Z$.

We claim that $\rho(X_0/Z)=2$.
Let $r=\rho(X_0/Z)$.
Let $\phi_* \colon N^1(X)\rightarrow N^1(X')$ be the map induced by push-forward. 
Observe that $\phi_*$ is surjective and maps the cone of effective divisors into the cone of effective divisors.
Let $\phi^*\colon N_1(X')\rightarrow N_1(X)$ be the dual of $\phi_*$.
Then, the homomorphism $\phi^*$ maps the cone of nef divisors into the cone of nef divisors.
A nef curve $C$ in $X'$ is contracted by $X'\rightarrow Z$ if and only if
\[
(K_{X_0}+B_0'+A_0)\cdot C = 
\phi_*(K_X+B+A) \cdot C =0.
\] 
Let $F'$ be the intersection of the cone of nef curves of $X_0$ with $(K_{X_0}+B_0'+A_0)^{\perp}$.
By duality, we have that $\phi^*(F')\subseteq F$.
Since $\phi^*$ is injective, we conclude that $F'$ has dimension at most two.
Hence, we conclude that $r\leq 2$.
On the other hand,
let $f_0\in F_0$ be a general point in a general fiber of $X_0\rightarrow Z$.
Let $x$ be the preimage of $f$ in $X$.
We show that we can find $C_1'\equiv C_1$ and $C_2'\equiv C_2$ so that 
\[
{\rm Ex}(X\dashrightarrow X') \cap C_i =\emptyset 
\text{ and }
x\in C_i 
\]
for each $i$.
Let $p\colon Y\rightarrow X$ and $q\colon Y\rightarrow X_0$ be a log resolution of the minimal model program.
Then, we have that
\[
p^*(K_X+B'+A) = q^*(K_{X_0}+B'_0+A_0) + E,
\]
where $E$ is an effective divisor.
By Theorem~\ref{introthm:nef-cone}, we can replace $C_1$ and $C_2$ by curves satisfying the following: 
\begin{itemize}
    \item the curves are disjoint from the set on which $p$ is not an isomorphism, 
    \item the curves are not contained in the divisorial locus of ${\rm Ex}(X\dashrightarrow X')$, and 
    \item both curves contain $x$.
\end{itemize}
Let $C_{Y,1}$ and $C_{Y,2}$ be the strict transform of $C_1$ and $C_2$ on $Y$, respectively.
Since $C_{Y,1}$ and $C_{Y,2}$ intersect $p^*(K_X+B'+A)$ trivially and
$q^*(K_{X_0}+B_0'+A_0)$ non-negatively, we conclude that
$C_{Y,1}$ and $C_{Y,2}$ must intersect both
$q^*(K_{X_0}+B_0'+A_0)$ and $E$ trivially.
In particular, $C_1$ and $C_2$ must be disjoint from 
the set ${\rm Ex}(X\dashrightarrow X_0)$.
Let $C_{1,0}$ and $C_{2,0}$ be the push-forward of $C_1$ and $C_2$ to $X_0$, respectively.
Then, the curves $C_{1,0}$ and $C_{2,0}$ are linearly independent in $N_1(X_0)$ and contracted to a point in $Z$.
Indeed, since both curves pass through $f_0$ and intersect 
$K_{X_0}+B'_0+A_0$ trivially, then they are contained in the fiber $F_0$.
Hence, we have that $r\geq 2$. 
Thus, we deduce that $r=2$ as claimed.

Let $E_{1,0}$ and $E_{2,0}$ be the push-forward of $E_1$ and $E_2$ to $X_0$, respectively.
Then, we conclude that 
\[
E_{1,0}+E_{2,0} \sim_{\qq,Z} -(K_{X_0}+B'_0) \sim_{\qq,Z} A_0
\]
is big over $Z$.
Since $E_{1,0}+E_{2,0}$ is big over $Z$, then either $E_{1,0}$ or $E_{2,0}$ must dominate $Z$.
Without loss of generality, we assume that $E_{1,0}$ dominates $Z$.
Let $Z_2$ be the image of $E_{2,0}$ in $Z$.
By the connectedness theorem~\cite[Theorem 1.2]{Bir21}
applied to $(X_0,B'_0+E_{1,0}+E_{2,0})$ over a general point of $Z_2$, we conclude that $Z_2=Z$.
This means that $E_{2,0}$ dominates the base as well.
Let $(F,B'_F+E_{1,F}+E_{2,F})$ be the restriction of 
$(X_0,B'_0+E_{1,0}+E_{2,0})$ to a general fiber.
Note that $\dim F < \dim X$ and $(F,B'_F+ E_{1,F}+E_{2,F})$ is a dlt log Calabi-Yau pair
of regularity zero.

By Lemma~\ref{lem:curves-generating}, we can further assume that $C_{1,0}$ and $C_{2,0}$ satisfy:
\begin{equation}\label{eq:2-equiv}
K_{X_0}\cdot C_{i,0} = -2, \quad
E_{1,0}\cdot C_{i,0} = 1, \quad
E_{2,0}\cdot C_{i,0} = 1, \text{ and } \quad 
B'_0 \cdot C_{i,0}=0.
\end{equation} 
Since $\rho(X_0/Z)=2$ and the curves
$C_{1,0}$ and $C_{2,0}$ are linearly independent in $N_1(X_0)$, we conclude that 
\[
-K_{X_0} \sim_{\qq,Z} 2E_{1,0} \sim_{\qq,Z} 2E_{2,0}
\text{ and }
B'_0 \sim_{\qq,Z} 0.
\]
We claim that $-K_{X_0}$ is nef over the base.
Let $C$ be an effective curve in $X_0$ which is contracted to a point.
Assume that $-K_{X_0}\cdot C <0$, then by the $\qq$-linear equivalence~\eqref{eq:2-equiv}, we conclude that 
\[
E_{1,0}\cdot C < 0 
\text{ and }
E_{2,0}\cdot C<0.
\]
Hence, $C$ must be contained in both $E_{1,0}$ and $E_{2,0}$, leading to a contradiction.
Since $X_0\rightarrow Z$ is a Fano type morphism,
it is a relative Mori dream space~\cite[Corollary 1.3.1]{BCHM10}.
Hence, $-K_{X_0}$ is semiample and big over $Z$.
Then, the following conditions hold for a general fiber:
\begin{itemize}
    \item $B'_{F_0}=0$,
    \item $-K_{F_0}$ is semiample and big, and 
    \item $-K_{F_0} \sim_\qq 2E_{1,F}\sim_\qq 2E_{2,F}$.
\end{itemize}
Since $\dim F_0 <\dim X$, by induction on the dimension, we can apply Lemma~\ref{lem:semiample-big} to $F$ to conclude that $F_0 \simeq \pp^1$.
This leads to a contradiction, since in this case we would have $C_{1,0}=C_{2,0}$.
We conclude that the cone~\eqref{eq:cone-1}
has a unique $(K_X+B')$-negative extremal ray.
\end{proof}

\section{Minimal log discrepancies of regularity one} 

In this section, we prove the 
theorem regarding the minimal log discrepancies of klt singularities of regularity one.
Theorem~\ref{thm:reg-one-coeff} is a generalization of Theorem~\ref{introthm:ACC} for log pairs of regularity one.
The main ingredients in the proof of the theorem 
are the existence of bounded complements~\cite[Theorem 1.8]{Bir19},
the existence of dlt modifications~\cite[Theorem 3.1]{KK10}, 
and the existence of the curves produced in Section~\ref{sec:cone-nef}.
Through the proof of Theorem~\ref{thm:reg-one-coeff}, we will often use the following lemma to control the generators of the relative cone of curves.

\begin{lemma}
\label{lem:curves-generating}
Let $\phi \colon Y\rightarrow Y_0$
and $\phi_0 \colon Y_0\rightarrow X$ be two projective birational maps.
Assume both $\phi$ and $\phi_0$ are relative Mori dream spaces.
Let $C_1,\dots, C_s$ be a set of curves in $Y_0$ that generate 
$N_1(Y_0/X)$
and let $C'_{s+1},\dots,C'_{r}$ be a set of curves in $Y$ that generate
$N_1(Y/Y_0)$.
Assume that $\phi({\rm Ex}(\phi))$ does not contain any of the $C_i$'s.
For each $i \in \{1,\dots,s\}$, let $C_i'$ be the strict transform of $C_i$.
Then, we have that
\[
\langle C'_1,\dots,C'_r\rangle = N_1(Y/X).
\]
\end{lemma}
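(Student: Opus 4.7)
The plan is to use the standard exact sequence of relative $1$-cycles associated with the birational contraction $\phi\colon Y\to Y_0$, namely
\begin{equation*}
N_1(Y/Y_0) \xrightarrow{\iota} N_1(Y/X) \xrightarrow{\phi_*} N_1(Y_0/X) \to 0,
\end{equation*}
where $\iota$ is induced by pushing forward cycles contracted by $\phi$ and $\phi_*$ is the usual pushforward of $1$-cycles. Since both $\phi$ and $\phi_0$ are relative Mori dream space morphisms, these Néron--Severi groups are finite dimensional $\mathbb{R}$-vector spaces. Surjectivity of $\phi_*$ is immediate: any curve on $Y_0$ not contained in $\phi(\mathrm{Ex}(\phi))$ has a strict transform on $Y$ mapping to it, while curves in $\phi(\mathrm{Ex}(\phi))$ can be moved numerically using the Mori dream structure. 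Exactness in the middle holds because the kernel of $\phi_*$ is spanned by the classes of curves contracted by $\phi$, which by definition lie in the image of $\iota$ (this is the usual dual statement to the fact that pullback $\phi^*\colon N^1(Y_0/X)\to N^1(Y/X)$ is injective).

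Next I would feed the two hypotheses into this exact sequence. By assumption, $C'_{s+1},\dots,C'_r$ generate $N_1(Y/Y_0)$, hence their images under $\iota$ generate $\ker(\phi_*)$. Meanwhile, since each $C_i$ (for $i\in\{1,\dots,s\}$) avoids $\phi(\mathrm{Ex}(\phi))$, its strict transform $C'_i$ is a well-defined curve on $Y$ with $\phi_*[C'_i]=[C_i]$. Because $[C_1],\dots,[C_s]$ generate $N_1(Y_0/X)$, the classes $\phi_*[C'_1],\dots,\phi_*[C'_s]$ also generate $N_1(Y_0/X)$.

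To finish, given any class $\alpha\in N_1(Y/X)$, write
\begin{equation*}
\phi_*(\alpha) = \sum_{i=1}^{s} a_i [C_i] = \phi_*\!\left( \sum_{i=1}^{s} a_i [C'_i] \right),
\end{equation*}
so that $\beta := \alpha - \sum_{i=1}^{s} a_i [C'_i]$ lies in $\ker(\phi_*)$. By the previous paragraph, $\beta = \sum_{j=s+1}^{r} b_j [C'_j]$, and hence $\alpha$ lies in the span of $C'_1,\dots,C'_r$, as desired.

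The only delicate point is the exactness of the sequence at $N_1(Y/X)$, that is, the statement that every numerical $1$-cycle class on $Y/X$ killed by $\phi_*$ comes from a class on $Y/Y_0$. I expect this to be the main technical hurdle, but it is standard in the Mori dream space setting: it follows by dualizing the injection $\phi^*\colon N^1(Y_0/X)\hookrightarrow N^1(Y/X)$, whose image is exactly $\iota(N_1(Y/Y_0))^\perp$. Once that is in hand, the rest of the argument is purely a matter of chasing generators through the exact sequence.
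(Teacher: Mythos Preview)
Your argument is correct and is essentially the dual formulation of the paper's proof. The paper argues on the $N^1$ side: it takes a $\qq$-divisor $D$ on $Y$ with $D\cdot C'_i=0$ for all $i$, observes that $D\equiv_{Y_0}0$ because $C'_{s+1},\dots,C'_r$ generate $N_1(Y/Y_0)$, invokes the Mori dream space hypothesis to write $D=\phi^*D_0$, and then uses the projection formula $\phi^*D_0\cdot C'_i=D_0\cdot \phi_*C'_i=D_0\cdot C_i$ to conclude $D_0\equiv_X 0$, hence $D\equiv_X 0$. You instead work on the $N_1$ side via the short exact sequence and chase generators. The step you flag as delicate---exactness at $N_1(Y/X)$, i.e.\ that $\ker\phi_*$ equals the image of $\iota$---unpacks to precisely the same input the paper uses: a divisor numerically trivial over $Y_0$ is a pullback from $Y_0$, which is where the Mori dream space assumption enters in both arguments. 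So the two proofs have identical content; the paper's version is marginally more concrete because it never names the exact sequence and works directly with a test divisor, while yours makes the structural reason for the result more transparent. One small remark: surjectivity of $\phi_*$ already follows from injectivity of $\phi^*$ by duality and does not need the Mori dream space hypothesis, so your appeal to it there is unnecessary.
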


\begin{proof}
Let $D$ be a $\qq$-divisor on $Y$ that intersect each of the $C_i'$ trivially.
It suffices to show that $D$ is numericallly trivial over $X$.
Since $D$ intersects $C'_{s+1},\dots, C'_{r}$ trivially
and these curves generate $N_1(Y/Y_0)$, 
we conclude that $D$ is numerically trivial over $Y_0$.
Since $\phi$ is a relative Mori dream space, 
we conclude that $D$ is $\qq$-linearly trivial over $Y_0$.
Write $D=\phi^* D_0$ for some $\qq$-divisor $D_0$ on $Y_0$.
For each $i\in \{1,\dots,s\}$, we have that
\[
0=D\cdot C'_i = \phi^*D_0 \cdot C'_i = D_0 \cdot C_i. 
\]
We conclude that $D_0$ intersects each $C_i$, with $i\in \{1,\dots,s\}$, trivially.
Hence, $D_0$ is numerically trivial over $X$
and then it is $\qq$-linearly trivial over $X$.
Thus, we have that $D$ is numerically trivial over $X$. This concludes the proof.
\end{proof}

Now, we turn to prove the ascending chain condition for minimal log discrepancies of regularity one near zero.
The proof will be divided into four steps depending on the structure of the dual complex of the modification.
The proof of each of these cases is similar in flavor.
Some crucial details differ, so we give a comprehensive proof in each case, despite some repetition.

\begin{theorem}\label{thm:reg-one-coeff}
Let $n$ be a positive integer.
Let $\Lambda\subset \qq$ be a set satisfying the descending chain condition with rational accumulation points.
There exists a constant $N:=N(n,\Lambda)$, 
only depending on $n$ and $\Lambda$,
satisfying the following.
Let
\[
\mathcal{M}_{n,\Lambda,r}:=
\{
{\rm mld}(X,\Delta;x) \mid 
\text{
$(X,\Delta;x)$ has regularity $r$ and ${\rm coeff}(\Delta)\subset \Lambda$
}
\}.
\]
Then, the set 
\begin{equation}\label{eq:mld-int-1/N}
\mathcal{M}_{n,\Lambda,1}\cap \left(0,\frac{1}{N}\right)
\end{equation} 
satisfies the ascending chain condition.
\end{theorem}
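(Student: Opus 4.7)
The plan is to reduce the ACC for minimal log discrepancies of regularity one near zero to the ACC for minimal log discrepancies of surface toric singularities, which is already established in Lemma~\ref{lem:k-th-surf-toric-mld}. First, I would take any klt singularity $(X,\Delta;x)$ of regularity one with coefficients in $\Lambda$ and pass to a bounded $N$-complement $(X,B;x)$ that realizes the regularity, using Lemma~\ref{lem:reg-bounded-comp}; so $N=N(n,\Lambda)$ and $N(K_X+B)\sim 0$. Then I would take a $\qq$-factorial dlt modification $\pi\colon (Y,B_Y)\to (X,B)$ via Lemma~\ref{lem:existence-dlt-mod}. By Lemma~\ref{lem:reg-comp-lcc} we may assume $x$ is a log canonical center, so Lemma~\ref{lem:class-dual-comp} applies and the dual complex $\mathcal{D}(Y,B_Y)$ falls into one of the four cases (i)--(iv) described there. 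The prime components $E_1,\ldots,E_r$ of $\lfloor B_Y\rfloor$ correspond to the vertices of the dual complex.

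Next, for each divisor $E_i$ mapping onto $x$, adjunction gives a pair $(E_i, B_{E_i})$ which, by Lemma~\ref{lem:from-reg-1-to-0}, is log Calabi-Yau of regularity zero, so $\lfloor B_{E_i}\rfloor$ has at most two components, each corresponding to an adjacent vertex in $\mathcal{D}(Y,B_Y)$. When $E_i$ has two adjacent log canonical divisors, $(E_i,B_{E_i})$ is birational to a $\pp^1$-link and Theorem~\ref{introthm:uniqueness} yields a unique extremal movable curve $C_i\subset E_i$ with prescribed intersections: $B_{E_i}'\cdot C_i=0$ and each boundary component meeting $C_i$ with multiplicity $1$. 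When $E_i$ is an endpoint in a chain, I would instead apply Theorem~\ref{introthm:nef-cone} to $(E_i,B_{E_i})$ to extract a nice curve $C_i$ in the smooth locus with bounded intersection with $B_{E_i}$. For divisors that map onto a higher-dimensional log canonical center $Z\supsetneq x$ (cases (i)--(iii)), Lemma~\ref{lem:not-point} already forces the corresponding log discrepancy to lie in a set satisfying ACC (and accumulating only to zero when $\Lambda$ is finite), so I can bound these separately and focus on the point-centered ones.

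Now I would write down, for each vertex $E_i$ mapping to $x$, the intersection equation
\[
0=(K_Y+B_Y)\cdot C_i = \sum_{j} a_{ij}(1-a_{E_j}(X,\Delta)) + (\text{bounded transversal contribution from }\Delta_Y \text{ and } B_Y'),
\]
where $a_{ij}=E_j\cdot C_i$ is controlled by Theorem~\ref{introthm:nef-cone}(2)--(3) and by the rigid intersections of Theorem~\ref{introthm:uniqueness}. The key structural observation, following the introduction, is that the matrix $(a_{ij})$ together with the inhomogeneous terms is exactly the intersection matrix of the minimal resolution of a surface toric singularity whose exceptional locus is a chain or a cycle of rational curves; one builds this surface singularity from the numerical data of the curves. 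More precisely, I would produce a surface toric pair $(S,\Delta_S;s)$ together with a toric resolution whose dual graph coincides with $\mathcal{D}(Y,B_Y)$ and whose coefficients set $\Lambda'$ is determined by $n,N,\Lambda$ (using Lemma~\ref{lem:reg-bounded-comp} and the transversal bound $k(n,N)$ from Theorem~\ref{introthm:nef-cone}) in such a way that the linear system solved to compute the $a_{E_i}(X,\Delta)$ matches the one computing the $k$-th minimal log discrepancy of $(S,\Delta_S;s)$ for some bounded $k=k(n)$. Then Lemma~\ref{lem:k-th-surf-toric-mld} applied to the finite-depth minimal log discrepancies of $(S,\Delta_S;s)$ yields the ACC for $a_{E_i}(X,\Delta)$, and taking the minimum over $i$ gives the ACC for $\mathrm{mld}(X,\Delta;x)$ in the interval $(0,1/N)$.

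The main obstacle is the bookkeeping in the third step: assembling a toric surface singularity whose resolution realizes, with bounded complexity, the full linear system produced from the various curves on the $E_i$'s. Three issues have to be negotiated simultaneously --- the number of variables (i.e.\ components of $\lfloor B_Y\rfloor$) is not bounded a priori, so the reduction must be through the intersection matrix rather than vertex-by-vertex; the coefficients entering the toric picture must remain in a DCC set depending only on $n$ and $\Lambda$, which is where the bounded index $N$ and the transversality of the $C_i$'s become essential; and the circle case (ii) must be handled separately from the interval cases to make sure that the toric surface singularity one constructs is genuinely log canonical. Once the translation to the surface toric setting is in place, the remaining cases and the ACC conclusion follow by invoking Lemma~\ref{lem:k-th-surf-toric-mld} together with Lemma~\ref{lem:not-point}, with the threshold $1/N$ chosen so that none of the controlled log discrepancies coming from Lemma~\ref{lem:not-point} interfere.
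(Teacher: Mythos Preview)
Your outline is essentially the paper's proof: bounded $N$-complement via Lemma~\ref{lem:reg-bounded-comp}, dlt modification, the four-case classification of Lemma~\ref{lem:class-dual-comp}, curves on the $E_i$'s from Theorems~\ref{introthm:nef-cone} and~\ref{introthm:uniqueness}, Lemma~\ref{lem:not-point} for the components with higher-dimensional image, and the translation of the resulting linear system into a surface toric mld handled by Lemma~\ref{lem:k-th-surf-toric-mld}. The paper carries out the four cases separately and in detail, but the architecture is exactly what you describe.

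One point needs correction. You say the threshold $1/N$ is chosen ``so that none of the controlled log discrepancies coming from Lemma~\ref{lem:not-point} interfere,'' but that is not its role (those divisors have center strictly larger than $x$ and never compute ${\rm mld}(X,\Delta;x)$ anyway). The real reason is that $N(K_X+B)\sim 0$ forces every log discrepancy of $(X,B)$ to lie in $\frac{1}{N}\zz_{\geq 0}$; hence any prime divisor $E$ with $a_E(X,\Delta)\in(0,1/N)$ automatically has $a_E(X,B)=0$, i.e.\ $E$ is a log canonical place of the complement. Without this reduction you have not explained why the minimum of $a_{E_i}(X,\Delta)$ over the divisors on your dlt modification (or its toroidal refinements) actually equals ${\rm mld}(X,\Delta;x)$ rather than merely bounding it from above. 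Once you insert this observation at the start, your argument goes through as in the paper.
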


\begin{proof}
By Lemma~\ref{lem:reg-bounded-comp}, there exists a constant $N:=N(n,\Lambda)$, only depending on $n$ and $\Lambda$, satisfying the following.
For each $(X,\Delta;x)$ as in the statement, there exists a $N$-complement $(X,B;x)$ so that
\[
{\rm reg}(X,\Delta;x) =
{\rm reg}(X,B;x).
\]
Furthermore, by Lemma~\ref{lem:reg-comp-lcc}, we know that $x$ is a log canonical center of $(X,B;x)$.
Note that every divisor computing a log discrepancy in
$(0,\frac{1}{N})$ of $(X,\Delta;x)$ must be a log canonical place of $(X,B;x)$.
Indeed, the log discrepancies of $(X,B;x)$ belong to the set
\[
\zz_{\geq 0}\left[ \frac{1}{N}\right],
\]
and we have that $a_E(X,\Delta)\leq a_E(X,B)$ for each $E$.
Thus, if $a_E(X,\Delta)<\frac{1}{N}$, then
we have that $a_E(X,B)=0$.
Hence, in order to prove the ascending chain condition of the set~\eqref{eq:mld-int-1/N}, it suffices to show that the minimum of the log discrepancies of $(X,\Delta;x)$ at the log canonical places of $(X,B;x)$ satisfies the ascending chain condition.

Let $(Y,B_Y)$ be a dlt modification of $(X,B;x)$ (see Lemma~\ref{lem:existence-dlt-mod}).
We may assume that $\lfloor B_Y\rfloor$ has at least one component mapping onto $x$.
Shrinking around $x$, we may assume that every log canonical center
of $(X,B;x)$ passes through $x$.
Let $E_1,\dots,E_r$ be the prime components of $\lfloor B_Y\rfloor$.
By Lemma~\ref{lem:class-dual-comp}, we know that $\mathcal{D}(Y,B_Y)$
belong to four different classes.
We will prove the theorem in each of these cases.\\

\textit{Step 1:} We prove the theorem in the case that $\mathcal{D}(Y,B_Y)$ is a closed interval: $E_1$ maps onto a log canonical center $Z_1\supsetneq x$,
    $E_r$ maps onto a log canonical center $Z_r\supsetneq x$, 
    $Z_1\neq Z_r$, and each $E_i$ with $i\in \{2,\dots,r-1\}$ maps onto $x$.\\
    
By Lemma~\ref{lem:not-point}, we know that $a_{E_1}(X,\Delta)$ 
and $a_{E_r}(X,\Delta)$ belong to a set $\mathcal{M}(n,N,\Lambda)$, only depending on $n,N$ and $\Lambda$, satisfying the ascending chain condition.
We let 
\[
\mathcal{C}(n,N,\Lambda):=
\{ 1-m \mid m\in \mathcal{M}(n,N,\Lambda)\}.
\] 
By~\cite[Theorem 1]{Mor20}, we can find a projective birational morphism $\phi_0\colon Y_0\rightarrow X$ which extracts the divisors $E_1$ and $E_r$.
By abuse of notation, we denote by $E_1$ and $E_r$ the strict transforms of these divisors on $Y_0$.
Write 
\[
\phi_0^*(K_X+B)=K_{Y_0}+E_1+E_r+B_{Y_0}'.
\]
Note that $E_1$ and $E_r$ intersect non-trivially in $Y_0$.
By Corollary~\ref{cor:two-comp-toric}, we conclude that $Z_{1,r}:=E_1\cap E_r$ is an irreducible variety and every log canonical center of $(Y_0,E_1+E_r+B_{Y_0}')$ can be extracted by a toroidal blow-up at the generic point of $Z_{1,r}$.
We can write 
\[
\phi_0^*(K_X+\Delta)=
K_{Y_0}+\Delta_{Y_0}=
K_{Y_0}+c_1E_1+c_rE_r+\Delta_{Y_0}',
\]
where $c_1,c_r\in \mathcal{C}(n,N,\Lambda)$.
In particular, the coefficients of the sub-pair $(Y_0,\Delta_{Y_0})$ belong to the set
\[
\Lambda \cup \mathcal{C}(n,N,\Lambda).
\]
Furthermore, the minimal log discrepancy of $(Y_0,\Delta_{Y_0})$ can be extracted by a toroidal blow-up at the generic point of $Z_{1,r}$.
Thus, the minimal log discrepancy is computed by a surface minimal log discrepancy.
Hence, the statement follows from Lemma~\ref{lem:k-th-surf-toric-mld}.\\

\textit{Step 2:} We prove the statement of the theorem in the case that $\mathcal{D}(Y,B_Y)$ is a circle: $E_1$ maps onto a log canonical center $Z_1 \supsetneq x$ and every other $E_i$ maps onto $x$.\\ 

Let $E_i$ be a prime component which maps onto $x$.
By~\cite[Theorem 1]{Mor20}, we can find two projective birational morphisms
$\phi_0\colon Y_0\rightarrow X$
and 
$\phi_1\colon Y_1\rightarrow Y_0$, so that
$\phi_0$ only extracts $E_1$
and $\phi_1$ only extracts $E_i$.
By abuse of notation, we may denote both the divisor and the strict transform by $E_i$.
In the model $Y_1$, the divisors $E_i$ must intersect $E_1$ in two disjoint irreducible components.
By Lemma~\ref{lem:not-point}, we know that $a_{E_1}(X,\Delta)$ belongs to the set $\mathcal{M}(n,N,\Lambda)$.
Let $C_i$ be a general $\pp^1$ in $E_i$ when considering its log pair structure with regularity zero (see Definition~\ref{def:gen-p1}).
Note that the curve $C_i$ generates $N_1(Y_1/Y_0)$.
We can obtain a dlt modification of
$(Y_0,E_1+E_i+B_{Y_0}')$ by performing toroidal blow-ups at the generic points of the intersection $E_i\cap E_1$.
We may replace $(Y,B_Y)$ with this new dlt modification.
Each prime component 
$E_2,\dots, E_r$ of $\lfloor B_Y\rfloor$ which is exceptional over $Y_0$ admits the structure of a log pair of regularity zero.
For each $i\in \{2,\dots,r\}$, we denote by $C_i$ a general $\pp^1$ in $E_i$.
By Lemma~\ref{lem:curves-generating}, we have that 
\[
\langle C_2,\dots,C_r\rangle = 
N_1(Y/Y_0).
\] 
Indeed, the curve $C_i$ generate $N_1(Y_1/Y_0)$ 
and the curves $C_j$, with $j\in \{1,\dots,r\}\setminus \{i\}$,
generate $N_1(Y/Y_1)$.
Let $\phi\colon Y\rightarrow Y_0$ be the induced projective birational morphism.
We can write 
\begin{equation}\label{eq:pb-delta}
\phi^*(K_{Y_0}+\Delta_{Y_0}+c_1E_1)= K_Y+ c_1E_1+(1-\alpha_2)E_2 +\dots + (1-\alpha_r)E_r + \Delta_{Y}', 
\end{equation} 
and 
\begin{equation}\label{eq:pb-b}
\phi^*(K_{Y_0}+B_{Y_0}) =
K_Y+E_1+\dots+E_r+B_{Y}'.
\end{equation} 
Subtracting~\eqref{eq:pb-delta} and~\eqref{eq:pb-b}, we obtain:
\begin{equation}\label{eq:lin-eq1}
(1-c_1)E_1 + \alpha_2E_2 +\dots+\alpha_r E_r \sim_{\qq, X} \Delta_{Y}' - B_{Y}'.
\end{equation} 
Observe that every curve $C_i$ is disjoint from both $\Delta_{Y}'$ and $B_{Y}'$.
Note that each $C_i$ lies in the smooth locus of $E_i$ and
intersect exactly two other of the prime divisors $E_1,\dots,E_r$.
In particular, the intersection $E_i\cdot C_i$ is a negative integer $-m_i$.
Intersecting the $\qq$-linear equation~\eqref{eq:lin-eq1} with each of the $C_i$, with $i\in \{2,\dots,r\}$, leads to a system of linear equations:
\begin{equation}
\label{equation:lin-sys-1}
\left[ 
\begin{matrix}
m_2 & -1 & 0 & \dots & 0 \\
-1 & m_3 & -1 & \dots & 0\\
0 & -1 & m_4 & \dots & 0 \\
\vdots & \vdots &\vdots &\vdots &\vdots \\
0 & 0 & 0 & -1 & m_r
\end{matrix}
\right] 
\left[ 
\begin{matrix}
\alpha_2 \\
\alpha_3 \\
\alpha_4 \\
\vdots \\
\alpha_r 
\end{matrix}
\right]
=
\left[ 
\begin{matrix} 
1-c_1 \\
0 \\
\vdots \\
0 \\
1-c_1
\end{matrix}
\right]
\end{equation} 
Note that the matrix of equation~\eqref{equation:lin-sys-1} is of full rank as the curves $C_2,\dots,C_r$ span the relative cone of curves over $Y_0$.
Let $X(\sigma)$ be the toric singularity associated to the continued fraction $[m_1,\dots,m_r]$.
Let $x_0\in X(\sigma)$ be the torus invariant point.
Let $(X(\sigma),c_1T_1+c_1T_2)$ be the log pair structure obtained by considering both torus invariant divisors with coefficient $c_1$.
By construction, we have that
\[
{\rm mld}(X,\Delta;x) = \min\{\alpha_2,\dots,\alpha_r\} =
{\rm mld}(X(\sigma),c_1T_1+c_1T_2;x_0).
\]
By Lemma~\ref{lem:k-th-surf-toric-mld}, we conclude that the minimum of the $\alpha_i$, with $i\in \{2,\dots, r\}$, belongs to a set satisfying the ascending chain condition which only depends on $n,N$ and $\Lambda$.\\

\textit{Step 3:} We prove the statement of the theorem in the case that $\mathcal{D}(Y,B_Y)$ is a closed interval: $E_1$ maps onto a log canonical center $Z_1\supsetneq x$, and each $E_i$ with $i\geq 2$ maps onto $x$.\\

By~\cite[Theorem 1]{Mor20}, there exists a projective birational morphism
$\phi_0 \colon Y_0\rightarrow X$ whose exceptional locus is purely divisorial and consists on the divisor $E_1$.
By the same result, we can find a projective birational morphism $\phi\colon Y_1\rightarrow Y_0$
which extracts the divisor $E_r$.
In the model $Y_1$ the divisors $E_1$ and $E_r$ intersect at an irreducible set of codimension two.
Furthermore, the center of $E_i$, with $i\in \{2,\dots,r-1\}$ maps to such irreducible subvariety.
By Theorem~\ref{thm:comp}, the singularity of $(Y_1,E_1+E_r+B_{Y_1}')$ is toroidal at the generic point of $E_1\cap E_r$.
Then, each $E_i$ with $i\in \{2,\dots,r-1\}$ can be extracted by a sequence of toroidal blow-ups.
By Lemma~\ref{lem:not-point}, we know that $a_{E_1}(X,\Delta)$ is contained in the set $\mathcal{M}(n,N,\Lambda)$
which satisfies the ascending chain condition.
We let $c_1=1-a_{E_1}(X,\Delta)$.
Let $(Y_1,B_{Y_1})$ be the log pull-back of $(X,B)$ to $Y_1$.
Let $(E_r,B_{E_r})$ be the pair obtained by adjunction of 
$(Y_1,B_{Y_1})$ to $E_r$.
By Lemma~\ref{lem:from-reg-1-to-0}, we conclude that $(E_r,B_{E_r})$ is a pair
of regularity zero.
Furthermore, since $E_r$ corresponds to one of the end-points of $\mathcal{D}(Y,B_Y)$,
we conclude that $(E_r,B_{E_r})$ has a unique divisorial log canonical center.
We call such divisor $S_r$ and we denote $B'_{E_r}:=B_{E_r}-S_r$.
By Theorem~\ref{introthm:nef-cone}, we can find a movable curve $C_r$ on $E_r$ satisfying the following conditions:
\begin{itemize}
    \item $C_r$ is in the smooth locus of $E_r$, 
    \item $C_r$ intersects $S_r$ transversally in at most $k$ points, and
    \item $C_r$ is either disjoint from $B'_{E_r}$ or it intersects $B'_{E_r}$ transversally in at most $k$ points.
\end{itemize}
Here, $k$ is a constant which only depends on $n$ and $N$.
We can obtain a dlt modification of $(Y_1,E_1+E_r+B_{Y_1}')$ by 
performing a sequence of toroidal blow-ups at the generic point of $E_1\cap E_r$.
We replace $(Y,B_Y)$ by this new dlt modification.
For $i\in \{2,\dots,r-1\}$, each prime component $E_i$ of $\lfloor B_Y\rfloor$ admits the structure of a log pair
of regularity zero.
For each $i\in \{2,\dots,r-1\}$, we denote by $C_i$ a general $\pp^1$ in $E_i$ (see Definition~\ref{def:gen-p1}).
We may identify $C_r$ with its strict transform in $Y$.
By Lemma~\ref{lem:curves-generating}, we have that
\[
\langle C_2,\dots, C_r\rangle = N_1(Y/Y_0).
\]
Let $\phi \colon Y\rightarrow Y_0$ be the induced birational morphism.
We can write
\begin{equation}\label{eq:s3-1}
\phi^*(K_{Y_0}+\Delta_{Y_0}+c_1E_1) =
K_Y + c_1E_1 + (1-\alpha_2) E_2 + \dots + (1-\alpha_r)E_r+ \Delta'_Y
\end{equation} 
and
\begin{equation}\label{eq:s3-2}
\phi^*(K_{Y_0}+B_{Y_0}) = K_Y+E_1+\dots+E_r+B'_Y.
\end{equation}
Here, $B'_Y\geq \Delta'_Y$. Subtracting~\eqref{eq:s3-1} and~\eqref{eq:s3-2}, we obtain that
\begin{equation}\label{eq:s3-3} 
(1-c_1)E_1 + \alpha_2 E_1 +\dots + \alpha_r E_r \sim_{\qq,X} \Delta_Y'-B_Y'.
\end{equation} 
Observe that for each $i\in \{2,\dots, r-1\}$, the curve $C_i$ lies in the smooth locus
of $Y$ and $C_i$ intersects exactly two other prime components of $\lfloor B_Y\rfloor$.
Moreover, each such $C_i$ is disjoint from $B_Y'$ and hence from $\Delta_Y'$.
On the other hand, we have that
\[
E_{r-1}\cdot C_r = c_{r-1,0} \leq k, \quad 
E_r\cdot C_r = -\frac{m_r}{N!}, \text{ and }
(\Delta'_Y-B'_Y) \cdot C_r = -\beta_{r,0},
\] 
where $c_{r-1,0}$ and $m_r$ are positive integers and
$\beta_{r,0}$ belongs to a set satisfying the ascending chain condition.
The denominator of the intersection $E_r\cdot C_r$ is bounded 
as the variety $Y$ has quotient singularities of order at most $N$ along $C_r$.
We let $c_{r-1}:=c_{r-1,0}N!$ and $\beta_r:=\beta_{r,0}N!$.
Intersecting the $\qq$-linear equivalence~\eqref{eq:s3-3} with the curves
$C_i$ with $i\in \{2,\dots,r\}$, we obtain the following system of linear equations:
\[
\left[ 
\begin{matrix}
m_2 & -1 & 0 & \dots & \dots & 0 \\
-1 & m_3 & -1 & \dots & \dots & 0\\
0 & -1 & m_4 & \ddots & \dots & 0 \\
\vdots & \vdots &\vdots &\vdots &\ddots &\vdots \\
0 & 0 & 0 & -1 & m_{r-1} & -1 \\
0 & 0 & 0 & 0 & -c_{r-1} & m_r
\end{matrix}
\right] 
\left[ 
\begin{matrix}
\alpha_2 \\
\alpha_3 \\
\alpha_4 \\
\vdots \\
\alpha_{r-1} \\
\alpha_r 
\end{matrix}
\right]
=
\left[ 
\begin{matrix} 
1-c_1 \\
0 \\
0 \\
\vdots \\
0 \\
\beta_r
\end{matrix}
\right]
\]
Let $\sigma\subset \qq^2$ be the cone of the surface toric singularity
corresponding to the continued fraction $[m_2,\dots,m_{r-1}]$.
Let $v_1$ and $v_2$ be the lattice generators of $\sigma$.
Let $x_1,\dots, x_r$ be a regular decomposition of $\sigma$.
Define $y_2:=m_rx_r - c_{r-1}x_{r-1}$. 
Let $\tau$ be the cone spanned by $x_r$ and $y_2$.
Then, $\tau$ admits a regular decomposition
given by lattice vectors $w_1,\dots, w_s$, where $s\leq c_{r-1}$.
There exists a unique linear function $M$ on $\qq^2$ for which
\[
M(v_1)=1-c_1 \text{ and } M(y_2)=\beta_r.
\]
Let $\Sigma$ be the cone spanned by $v_1$ and $y_2$.
Note that the values $1-c_1$ and $\beta_r$ belong to a set,
which only depends on $n,N$, and $\Lambda$,
and satisfies the ascending chain condition.
By construction, this linear function takes value $\alpha_i$ at $x_i$, for every $i\in \{2,\dots,r\}$.
If the minimizer of $M$ in ${\rm relint}(\Sigma)\cap \zz^2$ is contained in 
${\rm relint}(\sigma)\cap \zz^2$, then we are done by Lemma~\ref{lem:k-th-surf-toric-mld}.
Otherwise, we can assume that the minimizer is contained in ${\rm relint}(\tau)$.
By Lemma~\ref{lem:min-lattice}, we know that the minimizer must be one of the lattice elements
$\{w_1,\dots,w_s\}$.
Assume that it is attained at $w_s$.
By Lemma~\ref{lem:k-th-surf-toric-mld}, the value of $M$ at $w_s$ belongs to a set
satisfying the ascending chain condition which only depends on $\beta_r$ and $1-c_1$.
We can replace $y_2$ with $w_s$ and proceed inductively.
Since $s\leq c_{r-1}$, this argument can be repeated at most $c_r$ times 
until the minimizer is contained in ${\rm relint}(\sigma)$.
Hence, one of the $(c_{r-1}+1)$ smaller values of $M$ in ${\rm relint}(\Sigma)\cap \zz^2$ must
lie in ${\rm relint}(\sigma)$. 
Then, the statement follows from Lemma~\ref{lem:k-th-surf-toric-mld}.\\

\textit{Step 4:} We prove the statement of the theorem in the case that $\mathcal{D}(Y,B_Y)$ is a closed interval: each $E_i$ with $i\in \{1,\dots, r\}$ maps onto $x$.\\

By~\cite[Theorem 1]{Mor20}, there exists a projective birational morphism
$\phi_0 \colon Y_0\rightarrow X$ whose exceptional locus is purely divisorial and consists of the divisors $E_1$ and $E_r$.
In the model $Y_0$, the divisors $E_1$ and $E_r$ intersect at an irreducible set of codimension two.
Furthermore, the center of $E_i$, with $i\in \{2,\dots,r-1\}$ maps onto such irreducible subvariety.
By~\cite[Theorem 18.22]{Kol92}, the singularity of $(Y_0,E_1+E_r+B_{Y_0}')$ is toric at the generic point of $E_1\cap E_r$.
Then, each $E_i$ with $i\in \{2,\dots,r-1\}$ can be extracted by a sequence of toroidal blow-ups.
For $i\in \{1,r\}$, we denote by $(E_i,B_{E_i})$ the pair obtained by adjunction of 
$(Y_0,E_1+E_r+B_{Y_0}')$ to $E_i$.
By Lemma~\ref{lem:from-reg-1-to-0}, we know that $(E_i,B_{E_i})$ has regularity zero
and $\lfloor B_{E_i}\rfloor$ is a prime divisor $S_i$ which corresponds to the intersection $E_1\cap E_r$.
Proceeding as in Step 3, for $i\in \{1,r\}$, we can find a movable curve $C_i$ on $E_i$ satisfying the following conditions:
\begin{itemize}
    \item $C_i$ is in the smooth locus of $E_i$, 
    \item $C_i$ intersects $S_i$ transversally in at most $k$ points, and
    \item $C_i$ is either disjoint from $B'_{E_i}$ or it intersects $B'_{E_i}$ transversally in at most $k$ points.
\end{itemize}
Here, $k$ is a constant which only depends on $n$ and $N$.
We claim that the curves $C_1$ and $C_r$ span $N_1(Y_0/X)$.
Note that $N_1(Y_0/X)$ is two-dimensional, so it suffices to prove that $C_1$ and $C_r$ are linearly independent.
Assume that $C_1+\alpha C_r \equiv_X 0$  for some $\alpha\in \rr$.
Observe that $E_1\cdot C_1$ is negative and $E_1\cdot C_r$ is positive.
We conclude that $\alpha$ must be positive.
Hence, we have an effective curve which is numerically trivial over $X$.
This gives a contradiction.
Thus, $\langle C_1,C_r\rangle = N_1(Y_0/X)$.

We can produce a dlt modification of $(Y_0,E_1+E_r+B_{Y_0}')$ by 
performing a sequence of toroidal blow-ups at the generic point of $E_1\cap E_r$.
We replace $(Y,B_Y)$ by this dlt modification.
For $i\in \{2,\dots,r-1\}$, each prime component $E_i$ of $\lfloor B_Y\rfloor$ admits the structure of a log pair
of regularity zero.
For each $i\in \{2,\dots,r-1\}$, we denote by $C_i$ a general $\pp^1$ in $E_i$.
We may identify both $C_1$ and $C_r$ with their strict transforms on $Y$.
By Lemma~\ref{lem:curves-generating}, we have that
\[
\langle C_1, C_2,\dots, C_r\rangle = N_1(Y/X).
\]
Let $\phi \colon Y\rightarrow X$ be the induced birational morphism.
We can write
\begin{equation}\label{eq:s4-1}
\phi^*(K_X+\Delta) =
K_Y + (1-\alpha_1) E_1 + \dots + (1-\alpha_r)E_r + \Delta'_Y
\end{equation} 
and
\begin{equation}\label{eq:s4-2}
\phi^*(K_X+B_X) = K_Y+E_1+\dots+E_r+B'_Y,
\end{equation}
where $B'_Y\geq \Delta'_Y$. 
Subtracting the equations~\eqref{eq:s4-1} and~\eqref{eq:s4-2}, we obtain that
\begin{equation}\label{eq:s4-3} 
\alpha E_1 + \alpha_2 E_1 + \dots + \alpha_r E_r \sim_{\qq,X} \Delta_Y'-B_Y'.
\end{equation} 
Observe that for each $i\in \{2,\dots, r-1\}$, the curve $C_i$ lies in the smooth locus
of $Y$ and $C_i$ intersects exactly two other prime components of $\lfloor B_Y\rfloor$.
Moreover, each such $C_i$ is disjoint from $B_Y'$ and hence from $\Delta_Y'$.
On the other hand, for $i\in \{1,r\}$, we have that
\[
E_i \cdot C_i = - \frac{m_i}{N!}, \quad
(\Delta'_Y-B'_Y)\cdot C_i = -\beta_{i,0}, \quad
E_2 \cdot C_1= c_{2,0}, \text{ and  } \quad
E_{r-1}\cdot C_r= c_{r-1,0}.
\] 
where $c_{r-1,0},c_{2,0},m_1$, and $m_r$ are positive integers.
$\beta_{1,0}$ and $\beta_{r,0}$ are rational numbers which belongs 
to a set satisfying the ascending chain condition.
The denominators of the intersection products $E_1\cdot C_1$ and $E_r\cdot C_r$ are bounded above
as the variety $Y$ has quotient singularities of order at most $N$ along $C_1$ and $C_r$.
We set
\[
c_{r-1}=c_{r-1,0}N!, \quad  c_2:=c_{2,0}N!, \quad \beta_1=\beta_{1,0}N!,
\text{ and  } \quad \beta_r:=\beta_{r,0}N!.
\]
Intersecting the $\qq$-linear equivalence~\eqref{eq:s4-3}, with the curves
$C_i$ with $i\in \{2,\dots,r\}$, we obtain the following system of linear equations:
\begin{equation}
\label{eq:s4-4}
\left[ 
\begin{matrix}
m_1 & -c_2 & 0 & \dots & \dots & 0 \\
-1 & m_2 & -1 & \dots & \dots & 0\\
0 & -1 & m_3 & \ddots & \dots & 0 \\
\vdots & \vdots &\vdots &\vdots &\vdots &\vdots \\
0 & 0 & 0 & -1 & m_{r-1} & -1 \\
0 & 0 & 0 & 0 & -c_{r-1} & m_r
\end{matrix}
\right] 
\left[ 
\begin{matrix}
\alpha_1 \\
\alpha_2 \\
\alpha_3 \\
\vdots \\
\alpha_{r-1} \\
\alpha_r 
\end{matrix}
\right]
=
\left[ 
\begin{matrix} 
\beta_1 \\
0 \\
0 \\
\vdots \\
0 \\
\beta_r
\end{matrix}
\right]
\end{equation} 
Note that this linear system~\eqref{eq:s4-4} has a unique solution as the matrix on the left-hand side has full rank.
This follows from the fact that the $C_i$'s generate the relative cone of curves.

Let $\sigma\subset \qq^2$ be the cone of the surface toric singularity
corresponding to the continued fraction $[m_2,\dots,m_{r-1}]$.
Let $v_1$ and $v_2$ be the lattice generators of $\sigma$.
Let $x_1,\dots, x_r$ be a regular decomposition of $\sigma$.
Define
\[
y_1:=m_1x_1 - c_2x_2 \text{ and }
y_2:=m_rx_r - c_{r-1}x_{r-1}.
\]
Let $\tau_1$ be the cone spanned by $v_1$ and $x_1$, and
let $\tau_2$ be the cone spanned by $x_r$ and $y_2$.
Then, $\tau_1$ (resp. $\tau_2)$ admits a regular decomposition
given by lattice vectors $u_1,\dots, u_{s_1}$ (resp. $w_1,\dots,w_{s_2}$) where $s_1,s_2\leq c_{r-1}$.
There exists a unique linear function $M$ on $\qq^2$ for which
\[
M(y_1)=\beta_1 \text{ and } M(y_2)=\beta_r.
\]
Let $\Sigma$ be the cone spanned by $y_1$ and $y_2$.
Note that the values $\beta_1$ and $\beta_r$ belong to a set,
which only depends on $n,N$, and $\Lambda$,
and satisfies the ascending chain condition.
By construction, we have that
\[
M(x_i)=\alpha_i \text{ for every $i\in \{1,\dots,r\}$.}
\]
If the minimizer of $M$ in ${\rm relint}(\Sigma)\cap \zz^2$ is contained in 
${\rm relint}(\sigma)\cap \zz^2$, then we are done by Lemma~\ref{lem:k-th-surf-toric-mld}.
Otherwise, we can assume that the minimizer is contained in ${\rm relint}(\tau_1)$.
By Lemma~\ref{lem:min-lattice}, we know that the minimizer must be one of the lattice elements
$\{u_1,\dots,u_{s_1}\}$.
Assume that it is attained at $u_1$.
By Lemma~\ref{lem:k-th-surf-toric-mld}, 
the value $M(u_1)$
belongs to a set
satisfying the ascending chain condition which only depends 
on $\beta_1$ and $\beta_r$.
We can replace $y_1$ with $u_1$ and proceed inductively.
Since $s_1,s_r\leq kN!$ this argument can be repeated at most $kN!$ times 
until the minimizer is contained in ${\rm relint}(\sigma)$.
Hence, one of the $(kN!+1)$ smaller values of $M$ in ${\rm relint}(\Sigma)\cap \zz^2$ must
lie in ${\rm relint}(\sigma)$. 
Then, the statement follows from Lemma~\ref{lem:k-th-surf-toric-mld}.
\end{proof}

\begin{proof}[Proof of Corollary~\ref{introcor:bounded-ext}]
Let $(X;x)$ be a $n$-dimensional $\qq$-factorial klt singularity of regularity one.
By Lemma~\ref{lem:reg-bounded-comp} and Lemma~\ref{lem:reg-comp-lcc}, 
there exists a constant $N:=N(n)$, only depending on $n$, satisfying the following.
There exists a $N$-complement $(X,B;x)$ so that
\[
{\rm reg}(X,\Delta;x)={\rm reg}(X,B;x).
\]
Moreover, $x$ is a log canonical center of $(X,B;x)$.
By the proof of Theorem~\ref{thm:reg-one-coeff}, we know that the minimal log discrepancy
at the divisors over $x$ which are log canonical places of $(X,B;x)$
satisfies the ascending chain condition.
In particular, there exists a constant $a(n)$, only depending on $n$, so that
$a_E(X,\Delta)<a(n)$ for some log canonical place $E$ of $(X,B;x)$ which maps onto $x$.
Then, the existence of the projective birational morphism
$Y\rightarrow X$ follows from~\cite[Theorem 1]{Mor20}.
\end{proof}

\begin{proof}[Proof of Corollary~\ref{introthm:ACC}]
In the proof of Theorem~\ref{thm:reg-one-coeff}, if we start with a finite set $\Lambda$, 
then the log discrepancy of every log canonical center of $(X,B;x)$ which does not map to $x$ belongs 
to a finite set due to Lemma~\ref{lem:not-point}.
This implies that the coefficients of the surface toric germ constructed in the proof of Theorem~\ref{thm:reg-one-coeff}
belong to a finite set.
Hence, the corollary follows given that the accumulation points of the log discrepancies 
of toric surface singularities with coefficients in a finite set can only accumulate to zero.
\end{proof}

\bibliographystyle{habbrv}
\bibliography{bib}

\end{document}